\theoremstyle{definition}
\newtheorem{nul}{}[section]
\newtheorem{dfn}[nul]{Definition}
\newtheorem{rmk}[nul]{Remark}
\newtheorem{cnv}[nul]{Convention}
\newtheorem{ntn}[nul]{Notation}
\newtheorem{exm}[nul]{Example}
\newtheorem{wrn}[nul]{Warning}
\newtheorem{qst}[nul]{Question}
\newtheorem{hyp}[nul]{Hypothesis}
\newtheorem*{dfn*}{Definition}
\newtheorem*{axm*}{Axiom}
\newtheorem*{ntn*}{Notation}
\newtheorem*{exm*}{Example}
\newtheorem*{exr*}{Exercise}
\newtheorem*{int*}{Intuition}
\newtheorem*{qst*}{Question}
\newtheorem*{rmk*}{Remark}
\theoremstyle{plain}
\newtheorem{thm}[nul]{Theorem}
\newtheorem{prop}[nul]{Proposition}
\newtheorem{lem}[nul]{Lemma}
\newtheorem{cnj}[nul]{Conjecture}
\newtheorem*{thm*}{Theorem}
\newtheorem*{prop*}{Proposition}
\newtheorem*{cor*}{Corollary}
\newtheorem*{lem*}{Lemma}
\newtheorem*{cnj*}{Conjecture}
\let\oldwidetilde\widetilde
\protected\def\widetilde{\oldwidetilde}
\DeclareMathOperator*{\colim}{\mathrm{colim}}
\DeclareMathOperator{\Ss}{\mathbb{S}}
\DeclareMathOperator{\F}{\mathbb{F}}
\DeclareMathOperator{\A}{\mathcal{A}}
\DeclareMathOperator{\C}{\mathcal{C}}
\DeclareMathOperator{\D}{\mathcal{D}}
\DeclareMathOperator{\MO}{\mathrm{MO}}
\DeclareMathOperator{\MOn}{\mathrm{MO \langle n \rangle}}
\DeclareMathOperator{\bS}{\mathbb{S}}
\DeclareMathOperator{\Ext}{\mathrm{Ext}}
\DeclareMathOperator{\Sp}{\mathrm{Sp}}
\newcommand{\spO}{\Sigma^{\infty}_+ \mathrm{O \langle n-1\rangle}}
\newcommand{\wt}{\widetilde}
\newcommand{\BO}{\mathrm{BO}}
\newcommand{\BP}{\mathrm{BP}}
\newcommand{\Syn}{\mathrm{Syn}}
\newcommand{\MSpin}{\mathrm{MSpin}}
\newcommand{\Z}{\mathbb{Z}}
\def\on{\mathrm{o}\langle n-1\rangle}
\def\sOf{\Sigma^{\infty} \mathrm{O} \langle n-1\rangle}
\DeclarePairedDelimiter\abs{\lvert}{\rvert}%
\let\oldabs\abs
\def\abs{\@ifstar{\oldabs}{\oldabs*}}
\let\oldtocsection=\tocsection
\let\oldtocsubsection=\tocsubsection
\let\oldtocsubsubsection=\tocsubsubsection
\renewcommand{\tocsection}[2]{\hspace{0em}\oldtocsection{#1}{#2}}
\renewcommand{\tocsubsection}[2]{\hspace{1em}\oldtocsubsection{#1}{#2}}
\renewcommand{\tocsubsubsection}[2]{\hspace{2em}\oldtocsubsubsection{#1}{#2}}
\newcommand{\NB}[1]{\todo[color=gray!40]{#1}}
\newcommand{\TODO}[1]{\todo[color=red]{#1}}
\newcommand{\NB}[1]{}
\newcommand{\TODO}[1]{}
\renewcommand{\todo}[1]{}
\renewcommand{\todo}[1]{}
\title{Inertia groups in the metastable range}
\author{Robert Burklund}
\address{Department of Mathematics, MIT, Cambridge, MA, USA}
\email{burklund@mit.edu}
\author{Jeremy Hahn}
\address{Department of Mathematics, MIT, Cambridge, MA, USA}
\email{jhahn01@mit.edu}
\author{Andrew Senger}
\address{Department of Mathematics, Harvard University, Cambridge, MA, USA}
\email{senger@math.harvard.edu}
\begin{document}

\begin{abstract}
We prove that the inertia groups of all sufficiently-connected, high-dimensional $(2n)$-manifolds are trivial. This is a key step toward a general classification of manifolds in the metastable range.  Specifically, for $m \gg 0$ and $k>5/12$, suppose $M$ is a $\lfloor km \rfloor$-connected, smooth, closed, oriented $m$-manifold and $\Sigma$ is an exotic $m$-sphere.  We prove that, if $M \sharp \Sigma$ is diffeomorphic to $M$, then $\Sigma$ bounds a parallelizable manifold. Our proof is built on an understanding of the second extended power functor in Pstr\k{a}gowski's category of synthetic spectra.
  %Our proof is an application of higher algebra in Pstr\k{a}gowski's category of synthetic spectra, and builds on previous work of the authors.
\end{abstract}
\maketitle

\setcounter{tocdepth}{1}
\tableofcontents
\vbadness 5000

%----------------------------------------------------------------------%

% \newpage
\section{Introduction}\label{sec:intro}
%Fix a dimension $m \ge 5$.
An $m$-dimensional \emph{exotic sphere} $\Sigma$ is a smooth, oriented manifold that is homeomorphic, but not necessarily diffeomorphic, to $S^m$.
When $m \ge 5$, exotic spheres up to orientation-preserving diffeomorphism constitute the Kervaire--Milnor group $\Theta_m$, with group operation given by connected sum.
%The collection of all exotic spheres, considered up to orientation-preserving diffeomorphism and with the group operation of connected sum, forms the Kervaire--Milnor group $\Theta_m$ \cite{KervaireMilnor}.
Kervaire and Milnor proved that there is an exact sequence
\[0 \to \mathrm{bP}_{m+1} \to \Theta_m \to \mathrm{coker}(J)_m,\]
%$$
%\begin{tikzcd}
%0 \arrow{r} & \mathrm{bP} \arrow{r} & \Theta_m \arrow{r} & \mathrm{coker}(J) \arrow{r} & 0,
%\end{tikzcd}
%$$
where $\mathrm{bP}_{m+1}$ is the subgroup of exotic spheres that bound parallelizable $(m+1)$-manifolds.
When $m$ is even, $\mathrm{bP}_{m+1}$ contains only the standard sphere $S^m$.

If $M$ is any smooth, oriented $m$-manifold, and $\Sigma \in \Theta_m$, then we may form the connected sum $M \sharp \Sigma$.
Often, this construction will change the diffeomorphism type of $M$, but this need not always be the case.

\begin{dfn}
Suppose $M$ is a smooth, closed, oriented $m$-manifold.
Then the \emph{inertia group} of $M$, denoted $I(M)$, is the subgroup of $\Theta_m$ consisting of all exotic spheres $\Sigma$ such that $M \sharp \Sigma$ is diffeomorphic to $M$ via an orientation-preserving diffeomorphism.
\end{dfn}

%\begin{rmk} \label{rmk:subgroups}
%One is often also interested in specific subgroups of $I(M)$.
%For example, there is the \emph{homotopy inertia group} $I_{h}(M) \subseteq I(M)$ of spheres $\Sigma$ such that $M \sharp \Sigma \simeq M$ via a diffeomorphism homotopic to the standard homeomorphism.
%  There is also the \emph{concordance inertia group} $I_{c}(M) \subseteq I(M)$ of spheres such that $M \sharp \Sigma$ is concordant to $M$ \cite{Munkres, LevineInertia}. \todo{AS: Is this remark necessary? We never reference these groups again as far as I can tell, and it's taking up valuable space on page 1.}
%\end{rmk}

Essentially by definition, the inertia group of any exotic sphere is trivial.
On the other hand, Winkelnkemper constructed manifolds of every dimension with $I(M)=\Theta_m$ \cite{Winkelnkemper}. In general, both precise calculations and bounds on the size of $I(M)$ are known only by hard work in special circumstances. %the subgroups of \Cref{rmk:subgroups} are known only by hard work in special circumstances.
There have been explicit calculations of inertia groups of low dimensional complex and quaternionic projective spaces \cite{CPinertiaI, QuatInertia, CPinertiaII}, as well as of certain products of spheres with one another and with low dimensional complex projective spaces \cite{SchultzProduct, Souls}.
The inertia groups of hyperbolic manifolds are the subject of striking results of Farrell and Jones \cite{FarrellJones,FarrellJonesII}, and a sample of additional results may be found in \cite{Naoum, Frame}.

The most general class of manifolds for which inertia groups are reasonably understood are the \emph{highly connected manifolds}.
A $(2n)$-dimensional manifold is said to be highly connected if it is $(n-1)$-connected, meaning it is simply connected and its first $(n-1)$ integral homology groups vanish.  
In the 1960s, Wall \cite{WallInertia} \cite[\S 16]{Wall67} and Kosinski \cite{Kosinski} proved the following:

\begin{thm}[Wall, Kosinski]
Suppose that $M$ is a stably frameable $(n-1)$-connected, smooth, closed $(2n)$-manifold, with $n \ge 3$. Then $I(M)=0$.
\end{thm}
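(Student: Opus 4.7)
The plan is to combine Wall's handle-decomposition theorem for highly connected manifolds with an explicit parallelizable nullbordism construction. By Wall's theorem, $M$ admits a handle decomposition $M = H \cup_f D^{2n}$, where $H$ is an $n$-handlebody (a $0$-handle together with some $n$-handles), $\partial H \cong S^{2n-1}$, and $f : S^{2n-1} \to \partial H$ is a diffeomorphism. The stable frameability of $M$ implies that $H$ itself is parallelizable: $H$ retracts onto a wedge of $n$-spheres, so the obstructions to a full parallelization are stably inherited from $M$ and vanish.

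Given $\Sigma \in I(M)$, write $\Sigma \cong D^{2n} \cup_\sigma D^{2n}$ for a representative $\sigma \in \pi_0 \mathrm{Diff}^+(S^{2n-1}) \cong \Theta_{2n}$ (Cerf--Smale, valid since $n \geq 3$). Then $M \sharp \Sigma \cong H \cup_{f \circ \sigma} D^{2n}$. I would pick a diffeomorphism $\Phi : M \to M \sharp \Sigma$ and, invoking Palais' disk-uniqueness theorem, isotope $\Phi$ to send the top $2n$-handle of $M$ onto that of $M \sharp \Sigma$. The restriction $\Phi|_H$ is then a self-diffeomorphism $\psi : H \to H$ whose boundary restriction is isotopic to $\sigma$.

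The main step is to produce a parallelizable $(2n+1)$-manifold $V$ with $\partial V \cong \Sigma$, certifying $\Sigma \in \mathrm{bP}_{2n+1}$; since $m = 2n$ is even, $\mathrm{bP}_{m+1} = 0$, hence $\Sigma$ is the standard sphere. Set $X_1 = H \times [0,1]$, a parallelizable manifold with boundary the double $D(H) = H \cup_{\partial H} H$. Set $X_2$ to be two copies of $H \times [0,1]$ glued at their top faces via $\psi$; after smoothing corners, one computes $\partial X_2 \cong H \cup_\sigma H \cong D(H) \sharp \Sigma$, because twisting the boundary identification of a double by $\sigma$ introduces a connect-sum with $\Sigma$. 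Gluing $X_1$ to $X_2$ along the common codimension-zero submanifold $D(H) \setminus \mathrm{int}(D^{2n})$ of their boundaries produces a manifold $V$ with $\partial V = D^{2n} \cup_{S^{2n-1}} (\Sigma \setminus D^{2n}) \cong \Sigma$.

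The main technical obstacle is verifying that $V$ is genuinely parallelizable and not merely stably parallelizable: the gluing of parallelizations on $X_1$ and $X_2$ requires $\psi$ to preserve a chosen parallelization of $H$ up to homotopy, with the potential obstruction living in $[H, \GL_{2n}]$. I would address this either by modifying $\psi$ by a self-diffeomorphism of $H$ rel $\partial H$ to absorb the obstruction, or by observing that stably parallelizable nullbordisms of homotopy spheres can always be upgraded to parallelizable ones in this dimension via the surgery-below-the-middle-dimension argument that Kervaire--Milnor use in their characterization of exotic spheres. A secondary subtlety is the application of Palais' theorem in Step 2, which is what forces the stable-range hypothesis $n \geq 3$.
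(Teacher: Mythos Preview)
The paper does not itself prove this classical theorem; it is merely cited, with references to Wall and Kosinski. The closest thing in the paper is the modified mapping torus construction recalled in \S\ref{sec:inertia}, which in the stably-frameable case underlies one route to the result.

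Your setup through the extraction of a self-diffeomorphism $\psi : H \to H$ with $\psi|_{\partial H}$ isotopic to $\sigma$ is correct. The genuine gap is in your $X_2$: gluing two copies of $H \times [0,1]$ along their top $H$-faces via $\psi$ produces a manifold diffeomorphic to $H \times [0,2]$, via the map that is the identity on the first copy and $(x,t) \mapsto (\psi^{-1}(x), t+1)$ on the second. Hence $\partial X_2 \cong D(H)$, not $D(H) \sharp \Sigma$ --- the twist is absorbed precisely \emph{because} $\psi$ extends over all of $H$ and not merely over $\partial H$. Your resulting $V$ then has $\partial V \cong S^{2n}$ rather than $\Sigma$, and the argument collapses.

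The construction that retains the twist is Wall's mapping torus $T = (H \times I)/\bigl((x,1) \sim (\psi(x),0)\bigr)$, whose boundary is the mapping torus of $\sigma$ on $S^{2n-1}$. Arranging $\sigma$ to be the identity on a hemisphere $D^{2n-1}_+$ and attaching a single $2$-handle $D^{2n-1} \times D^2$ along $D^{2n-1}_+ \times S^1 \subset \partial T$ yields a $(2n+1)$-manifold $V$ with $\partial V \cong \Sigma$; this is exactly the argument recounted in \S\ref{sec:inertia}. Stable parallelizability of $V$ then reduces to controlling the clutching class of $d\psi$ in $[H,O]$, and this is where the stably-frameable hypothesis and the substantive work of Wall and Kosinski enter. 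Your final worry --- upgrading stably parallelizable to parallelizable --- is, by contrast, not an obstacle: a stably parallelizable compact manifold with nonempty boundary is automatically parallelizable, since the destabilization obstructions live above the dimension of its homotopy type.
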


The assumption that $M$ be stably frameable is essential in the above result, as the following examples show:

\begin{thm}[Kramer--Stolz \cite{KramerStolz}]
The inertia group of $\mathbb{HP}^{2}$ is all of $\Theta_8$, and the inertia group of $\mathbb{OP}^{2}$ is all of $\Theta_{16}$.  Neither of these groups are trivial.
\end{thm}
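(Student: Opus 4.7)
The plan is to combine a general homotopy-invariance observation with a classification of smooth structures in the specific homotopy types under consideration.

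I would first show that for any simply-connected closed smooth $m$-manifold $M$ and any $\Sigma \in \Theta_m$, the connected sum $M \sharp \Sigma$ is homotopy equivalent to $M$. This follows because $\Sigma$ with an open disk removed is a contractible manifold (a ``homotopy disk''), so gluing it onto $M$ with an open disk removed along the common boundary $S^{m-1}$ produces a space homotopy equivalent to $M$ itself. In particular, $\mathbb{HP}^2 \sharp \Sigma \simeq \mathbb{HP}^2$ and $\mathbb{OP}^2 \sharp \Sigma \simeq \mathbb{OP}^2$ for every exotic sphere in the relevant dimension.

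Next I would appeal to Wall's classification of $(n-1)$-connected $2n$-manifolds. For $n=4$, any $3$-connected $8$-manifold with the cohomology ring of $\mathbb{HP}^2$ admits a handle decomposition with one $0$-handle, one $4$-handle, and one $8$-handle. The diffeomorphism type is determined by the attaching data, which can be packaged into a finite collection of algebraic invariants (the intersection form and the Pontryagin class $p_2$) together with a ``smoothing invariant'' valued in $\Theta_8$. Since $T\Sigma$ is stably trivial, connected sum with $\Sigma$ preserves all characteristic classes and alters only the potential $\Theta_8$-invariant. Thus the task reduces to showing that the $\Theta_8$-action on the set of smooth structures on the topological $\mathbb{HP}^2$ has a single orbit, and analogously that $\Theta_{16}$ acts with a single orbit on smooth structures on $\mathbb{OP}^2$.

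The main obstacle is precisely this last classification step. Kramer and Stolz handle it by computing the Eells--Kuiper $\mu$-invariant together with analysis of the normal invariant in the surgery exact sequence; they show that these refined invariants do not distinguish the $\Theta_8$-translates of $\mathbb{HP}^2$ (respectively the $\Theta_{16}$-translates of $\mathbb{OP}^2$). Once this is established, the fact that both $\Theta_8 \cong \mathbb{Z}/2$ and $\Theta_{16}$ are nontrivial immediately implies $I(\mathbb{HP}^2) = \Theta_8$ and $I(\mathbb{OP}^2) = \Theta_{16}$ are both nontrivial, as claimed.
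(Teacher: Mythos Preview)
The paper does not give a proof of this statement: it is quoted as a result of Kramer and Stolz and simply cited via \cite{KramerStolz}. There is therefore nothing in the paper to compare your proposal against.

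As for your sketch itself, the outline is broadly in the right spirit, but it is not really a proof so much as a reduction to exactly the hard computation carried out by Kramer and Stolz. Your first paragraph (homotopy invariance of connected sum with a homotopy sphere) is fine. In the second paragraph, the phrase ``alters only the potential $\Theta_8$-invariant'' is circular: whether that invariant actually changes is precisely the content of the inertia group question. Your third paragraph then defers the entire substance of the theorem to the Kramer--Stolz computation. So what you have written is an accurate high-level summary of how one would deduce the inertia group statement from their classification of smooth structures on homotopy projective planes, but the nontrivial content---showing that the Eells--Kuiper invariant and the normal invariant do not separate the $\Theta_m$-translates---is not supplied. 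This is fine as a citation sketch, which is also all the paper itself offers.
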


The projective planes $\mathbb{HP}^{2}$ and $\mathbb{OP}^{2}$ are far from generic examples of highly connected manifolds.
Indeed, the solution \cite{AdamsJIV} of the Hopf Invariant $1$ question precludes any similar objects in larger dimensions.
In \cite{Boundaries}, the authors proved that this is no accident.  As part of that work, we showed that any $(n-1)$-connected $(2n)$-manifold of dimension larger than $464$ has trivial inertia group, whether or not it is stably frameable.
This result is an enhancement of
previous work of Stephan Stolz \cite[Theorem D]{StolzBook}, which relies on unpublished work of Mahowald \cite{MahTokyo} (see \cite{Changv1}) and resolved the cases where $n \not \equiv 1$ modulo $8$.
%% This result is due in part also to Stephan Stolz \cite[Theorem D]{StolzBook}, who used  to prove it in the special case that 

With hard work, the triviality of inertia groups has also been proven for restricted classes of $(n-1)$-connected $(2n+1)$,$(2n+2)$, $(2n+3)$, and $(2n+4)$-manifolds \cite{Kosinski} \cite{Wilkens} \cite[Theorems B,D]{StolzBook}.%\todo{Why these numbers? Kosisnski and Wilkens don't get past $(2n+1)$. It seems that Stolz has results that has results that go up to $(2n+4)$ but only states consequences for inertia groups up to $(2n+1)$.}

In this paper, we give the first general result on the inertia groups of $\lfloor km \rfloor$-connected $m$-manifolds for any $k<1/2$. Specifically, we study $\lfloor km \rfloor$-connected $m$-manifolds for real numbers $k>5/12$. %\footnote{When $a$ is not an integer, we take $a$-connected to mean $\lfloor a \rfloor$-connected. However, we note that using $\lceil a \rceil$ instead would not change the statements of any of our results, having at most the effect of changing the value of $N_k$ in \Cref{thm:intro-inertia}.}
We find, for each such real number $k$, an integer $N_k$ such that the following theorem is true:

%In this paper, we extend the results of \cite{Boundaries} deeper into the metastable range.
%Specifically, we study $(km)$-connected $m$-manifolds for real numbers $k>5/12$.\footnote{When $a$ is not an integer, we take $a$-connected to mean $\lfloor a \rfloor$-connected. However, we note that using $\lceil a \rceil$ instead would not change the statements of any of our results, having at most the effect of changing the value of $N_k$ in \Cref{thm:intro-inertia}.}  We find, for each such real number $k$, an integer $N_k$ such that the following theorem is true:

\begin{thm} \label{thm:intro-inertia}
Suppose $M$ is a $\lfloor km \rfloor$-connected, smooth, closed, oriented manifold of dimension $m \ge N_k$.  Then any exotic sphere $\Sigma \in I(M)$ must bound a parallelizable manifold.  In particular, if $M$ is even-dimensional then $I(M)=0$, and if $M$ is odd-dimensional then $I(M)$ is a subgroup of the cyclic group $bP_{m+1}$. \footnote{The order of $\mathrm{bP}_{m+1}$ is known except when $m=125$, where the last remaining case of the Kervaire Invariant one problem remains unresolved.}%of known order, $\mathrm{bP}_{m+1}$. \footnote{Except for $m=125$, where the order of $\mathrm{bP}_{126}$ depends on the remaining case of the Kervaire invariant one problem.}
\end{thm}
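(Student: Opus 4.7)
The plan is to extend the strategy of the authors' previous paper \cite{Boundaries}, which settled the case of $(n-1)$-connected $(2n)$-manifolds, deeper into the metastable range. The proof naturally splits into a geometric reduction to a stable-homotopy statement, followed by a synthetic-spectra calculation.

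First I would reduce the question to one about a Hurewicz-type map out of the sphere spectrum. A $(km)$-connected $m$-manifold $M$ has a stable tangent classifying map factoring through $\mathrm{BO}\langle r \rangle$ for $r$ on the order of $km+1$. Following Stolz's framework as refined in \cite{Boundaries}, the hypothesis $\Sigma \in I(M)$, witnessed by a diffeomorphism $\phi \colon M \sharp \Sigma \to M$, translates into the statement that $\Sigma$ -- equipped with the canonical $\mathrm{BO}\langle r \rangle$-structure coming from its stable parallelizability -- represents the trivial class in the tangential bordism group $\pi_m \mathrm{MO}\langle r \rangle$. It therefore suffices to show that, for $r$ in the range dictated by $k > 5/12$, the kernel of the unit-induced map $\pi_m \mathbb{S} \to \pi_m \mathrm{MO}\langle r \rangle$ contains only classes of exotic spheres that bound parallelizable manifolds.

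Second, I would tackle this stable-homotopy statement inside Pstragowski's category of synthetic spectra over $\mathbb{F}_2$ (and $\mathrm{BP}$). The strategy, as in \cite{Boundaries}, is to lift the Adams spectral sequence for $\mathrm{MO}\langle r \rangle$ to the synthetic level, exploit whatever multiplicative structure on $\mathrm{MO}\langle r \rangle$ is available, and show that the classes in $\pi_m \mathbb{S}$ detecting $\mathrm{coker}(J)_m$ remain nonzero in the Thom-spectrum bordism group. The integer $N_k$ would be extracted as the threshold in $m$ beyond which the synthetic comparison becomes sharp.

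The main obstacle will be controlling the additional differentials and hidden extensions that enter the range of interest as the connectivity threshold drops from $1/2$ to $5/12$. Each small decrease in $k$ admits new Adams classes into play, and one must verify that none of them create new cancellations in the Hurewicz kernel. The fraction $5/12$ almost certainly reflects a sharp line in the Adams $E_2$-page of $\mathrm{MO}\langle r \rangle$ beyond which the synthetic techniques of \cite{Boundaries} no longer suffice unassisted. I expect the crucial new input to be either a richer algebra structure (such as an $\mathbb{E}_k$-structure) on $\mathrm{MO}\langle r \rangle$ lifted to the synthetic category, or a more refined resolution that isolates and controls the dangerous Adams classes that appear in this expanded range.
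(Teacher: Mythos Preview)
Your geometric reduction is reasonable and close in spirit to the paper's, though the paper takes a slightly different route: rather than arguing directly that $[\Sigma]=0$ in $\pi_m\mathrm{MO}\langle r\rangle$ via $[M\#\Sigma]=[M]+[\Sigma]$ (which requires care with the compatibility of $\mathrm{BO}\langle r\rangle$-structures under the diffeomorphism), the paper uses the classical modified mapping torus to build an $(n-1)$-connected $(m+1)$-manifold bounding $\Sigma$, and then invokes Stolz's criterion via the long exact sequence of the pair $(\mathrm{MO}\langle n\rangle,\mathbb{S})$ in degree $m+1$. Both reductions land on the same homotopy-theoretic statement about the kernel of the unit map.

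The substantive gap is in your plan for the homotopy theory. The paper does \emph{not} analyze the Adams spectral sequence for $\mathrm{MO}\langle n\rangle$ at all, and in particular does not chase differentials or hidden extensions there. Instead it uses the presentation $\mathrm{MO}\langle n\rangle\simeq \mathbb{S}\otimes_{\Sigma^\infty_+\mathrm{O}\langle n-1\rangle}\mathbb{S}$ to reduce, through the relevant range, to a three-term complex built from the stable $J$-map $J:R=\Sigma^\infty\mathrm{O}\langle n-1\rangle\to\mathbb{S}$. Every element of the kernel is then either in the image of $\pi_*J$ or in a specific Toda bracket $\langle x,\,m-1\otimes J,\,J\rangle$ whose right-hand nullhomotopy comes from the $\mathbb{E}_\infty$-structure on $J$. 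The entire argument is then: bound the $\mathbb{F}_p$-Adams filtration of such classes from below, and compare with known vanishing lines for the \emph{sphere} (not for $\mathrm{MO}\langle r\rangle$). The fraction $5/12$ arises from combining the filtration bound $2M\approx n$ on the synthetic lift $\widetilde{J}$ with the slope-$1/4$ vanishing line coming from the nilpotence theorem.

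Your guess that the key new input is a richer synthetic multiplicative structure on $\mathrm{MO}\langle r\rangle$ is essentially backwards. The central technical move is to \emph{eliminate} dependence on ring structure: the ``can'' homotopy in the Toda bracket is rewritten, via a triangular prism built from the natural transformation $c:(-)^{\otimes 2}\to D_2$ and the Goodwillie decomposition $D_2(B)\to R\to B$, so that it no longer uses that $J$ is a ring map. Only then can one lift the whole diagram using $\widetilde{J}$, which is merely a map of synthetic spectra with high $\tau$-divisibility and carries no multiplicative structure. Without this reformulation the synthetic lift simply does not exist, and your proposed direct attack on the synthetic Adams filtration of $\mathrm{MO}\langle r\rangle$ would not produce the required filtration bounds.
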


%To the best of our knowledge, \Cref{thm:intro-inertia} is the first general result on the inertia groups of $(km)$-connected $m$-manifolds for any $k<1/2$.
%The closest precursors to this result dealt with restricted classes of $(n-1)$-connected $(2n+1)$,$(2n+2)$, and $(2n+3)$-manifolds \cite{Kosinski} \cite{Wilkens} \cite[Theorem D]{StolzBook}.

In order to prove \Cref{thm:intro-inertia}, we must push the techniques introduced in \cite{Boundaries} solidly into the metastable range, and we require deeper homotopy theoretic techniques to do so.  In \cite{Boundaries}, we used the functor $\nu:\Sp \to \Syn_{\mathbb{F}_p}$ to study the interaction of Adams filtration with commutativity up to coherent homotopy.  Specifically, we used the fact that, for any class $x$ in the homotopy of any $\mathbb{E}_{\infty}$ ring $R$, there is a canonical homotopy witnessing the relation $x^2=(-1)^{|x|} x^2$.  This canonical homotopy is only the first of the infinity of coherent homotopies that constitutes an $\mathbb{E}_{\infty}$ ring structure.  In this paper, we must interact with the $\mathbb{E}_{\infty}$ ring structure at a deeper level; for example, the difference between $\nu$ of a free $\mathbb{E}_{\infty}$ ring and the free $\mathbb{E}_{\infty}$ algebra on $\nu$ of a spectrum becomes very relevant.

\begin{rmk}
An explicit value for $N_k$ can be extracted from our proof whenever $k>13/30$.
For example, \Cref{thm:mon-unit} shows that one may take $N_{0.45}$ to be $750$, or even a slightly smaller integer. \todo{this is an overly conservative estimate}
 % , when interpreting $(0.45m)$-connectivity as $\lfloor 0.45m \rfloor$-connectivity. 

For $13/30 \ge k>5/12$, we express $N_k$ in terms of the unknown (and therefore inexplicit) vanishing curve on the $\mathrm{E}_{\infty}$-page of the Adams-Novikov spectral sequence for the homotopy groups of spheres.
That such a vanishing curve exists, and is sublinear, is the main technical result powering the Nilpotence Theorem of Devinatz--Hopkins--Smith \cite{DHS}.
\end{rmk}

As we recall in \Cref{sec:inertia}, a classic geometric argument allows us to deduce \Cref{thm:intro-inertia} as a consequence of the following theorem:

\begin{thm} \label{thm:intro-boundary}
Suppose $M$ is a $\lfloor km \rfloor$-connected smooth $(m+1)$-manifold with boundary homeomorphic to a sphere.  Then, if $m \ge N_k$, $\partial M$ also bounds a parallelizable manifold.
\end{thm}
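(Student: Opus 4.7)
The plan is to reduce \Cref{thm:intro-boundary} to a calculation in stable homotopy that, following \cite{Boundaries}, can be carried out using higher algebra in Pstragowski's category of synthetic spectra.

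First, I would use surgery below the middle dimension on the interior of $M$ to put it in a rigid form. Since $M$ is $(km)$-connected of dimension $m+1$ with $k > 5/12$, we have $2(km+1) > m+1$, so surgery is available on essentially all embedded spheres representing classes in $\pi_i M$ with $i \le m/2$. After simplification, the classifying map $TM \colon M \to \BO$ of the stable tangent bundle factors through a connected cover $\BO\langle \ell \rangle \to \BO$ for some $\ell$ comparable to $km$.

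Second, the Pontryagin--Thom construction applied to $(M, \partial M)$ produces a class $[\partial M] \in \pi_m \MO\langle \ell \rangle$ whose Hurewicz image in $\pi_m^s$ records the stable framing class of $\partial M$. The statement that $\partial M \in \bp_{m+1}$ is equivalent, via the Kervaire--Milnor exact sequence recalled in the introduction, to the vanishing of this image in the quotient $\pi_m^s / \mathrm{im}(J)$. The problem thus reduces to showing that the map
\[
\pi_m \MO\langle \ell \rangle \longrightarrow \pi_m^s / \mathrm{im}(J)
\]
is zero on the subset of classes realizable by such manifolds, once $m \ge N_k$.

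Third, I would attack this vanishing statement using the synthetic Adams--Novikov spectral sequence for $\MO\langle \ell \rangle$. The required inputs are: (i) control of the $E_2$-page as an $\Ext$-group over $\BP_\ast \BP$, arising from the cell filtration of the $\BO\langle \ell \rangle$-tower; (ii) identification of the relevant differentials via Toda brackets and higher multiplicative structure detected in $\Syn$; and (iii) a vanishing line on the $E_\infty$-page of the Adams--Novikov spectral sequence for $\Ss$. For $k > 13/30$, the classical linear vanishing line suffices and yields an explicit $N_k$; for $5/12 < k \le 13/30$, one invokes the sublinear vanishing line underlying the nilpotence theorem of Devinatz--Hopkins--Smith \cite{DHS}, which only produces an inexplicit $N_k$.

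The main obstacle will lie in step three. In \cite{Boundaries}, where essentially $k=1/2$, the manifold $M$ contributes a single handle dimension to the synthetic computation, and the Adams--Novikov filtration is controlled by a handful of cells. In the metastable range handled here, a whole band of handle dimensions contributes simultaneously, and one must verify that synthetic differentials and multiplicative structure kill all classes across this band, leaving only contributions that factor through $\mathrm{im}(J)$. Organizing this within the higher-algebraic framework of synthetic spectra, so that the vanishing line input of Devinatz--Hopkins--Smith can actually be applied at $E_\infty$, is the technical heart of the argument.
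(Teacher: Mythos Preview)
Your second step has the maps pointing the wrong way, and this is not a cosmetic slip: it is the entire content of the reduction.  There is no ``Hurewicz image'' map $\pi_m \MO\langle \ell \rangle \to \pi_m^s$; the unit goes from $\Ss$ to $\MO\langle \ell \rangle$, not back.  The correct argument, which the paper attributes to Stolz, runs through the cofiber sequence $\Ss \to \MO\langle \ell \rangle \to \MO\langle \ell \rangle/\Ss$.  The pair $(M,\partial M)$, with an arbitrary framing on the boundary sphere, represents a class in $\pi_{m+1}\bigl(\MO\langle \ell \rangle/\Ss\bigr)$; the connecting homomorphism sends it to the framed bordism class of $\partial M$ in $\pi_m\Ss$.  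By exactness, that class lies in the kernel of the unit $\pi_m\Ss \to \pi_m\MO\langle \ell \rangle$.  So the homotopy-theoretic target is: show that this kernel equals $\mathrm{Im}(J)$.  Your formulation (``$\pi_m \MO\langle \ell \rangle \to \pi_m^s/\mathrm{im}(J)$ is zero'') has no meaning as stated.

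Your third step also diverges substantially from the paper's actual method.  The paper does not run the Adams--Novikov spectral sequence for $\MO\langle \ell \rangle$.  Instead, it uses the Thom-spectrum presentation $\MO\langle \ell \rangle \simeq \Ss \otimes_{\Sigma^\infty_+\mathrm{O}\langle \ell-1\rangle} \Ss$ and a truncated bar filtration to express every element of the kernel of the unit either as a value of the stable $J$-map $\Sigma^\infty\mathrm{O}\langle \ell-1\rangle \to \Ss$ or as a Toda bracket $\langle x, m-1\otimes J, J\rangle$.  The crucial maneuver is then to rewrite this bracket so that it no longer depends on the ring structure of $J$, and to lift the rewritten bracket to $\mathbb{F}_p$-synthetic spectra using a synthetic lift $\widetilde{J}$ of high Adams filtration.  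This produces an $\mathbb{F}_p$-Adams filtration lower bound on everything in the kernel; one then invokes vanishing lines (Davis--Mahowald at $p=2$, and the $f_{\BP}$-dependent bound from the nilpotence theorem for the sharpest slope) to conclude that such classes are $K(1)$-locally detected, hence in $\mathrm{Im}(J)$.  Your sketch of ``control the $E_2$-page over $\BP_*\BP$ via the cell filtration of the $\BO\langle \ell\rangle$-tower'' is not the mechanism at work.
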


\Cref{thm:intro-boundary} has the additional geometric consequence that, for $m \gg 0$, any $\lfloor km \rfloor$-connected topological $(2m+1)$-manifold that is smoothable away from a disk must be smoothable.

As we explain in \Cref{sec:inertia}, the Pontryagin--Thom construction allows us to deduce \Cref{thm:intro-boundary} from the following purely homotopy-theoretic result:

\begin{thm} \label{thm:intro-MO}
  There exists a sublinear function $\epsilon(n)$ such that, if
  $0 \leq d \leq \frac{2}{5}n - \epsilon(n)$,
  then the kernel of the unit map
  \[ \pi_{2n+d}\Ss \to \pi_{2n+d}\MOn \]
  is equal to the degree $2n+d$ part of the image of $J$.
\end{thm}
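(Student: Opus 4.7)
The theorem has two directions to establish: (a) the image of $J$ in degree $2n+d$ lies in the kernel of the unit map $\Ss \to \MOn$, and (b) conversely nothing else is killed.

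Direction (a) is classical and does not use the metastable structure. Via the Whitehead tower, $\MOn$ admits a map of ring spectra from $\MStr$ or $\MSpin$, each of which receives an orientation by $\tmf$ or $\mathrm{KO}$. Since Adams' $e$-invariant calculation identifies the image of $J$ with $\ker(\Ss \to \mathrm{KO})$, it dies already in $\mathrm{KO}$ and hence in $\MOn$.

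Direction (b) is the main content, and I would attack it via the cofiber sequence $\Ss \to \MOn \to C$, whose connecting map $\delta \colon \pi_{2n+d+1} C \to \pi_{2n+d}\Ss$ has image exactly equal to $\ker(\mathrm{unit})$. The cofiber $C$ is naturally filtered by cells coming from the skeletal filtration of $\BOn$: the lowest nontrivial piece is in degree $n$, generated by the Bott class $\pi_n \BOn$, and the next cells lie in degrees roughly $\ge 2n$. In the range $2n \le 2n+d \le 2n + \tfrac{2}{5}n - \epsilon(n)$, I would show that only the bottom-cell piece contributes to $\pi_{2n+d+1}C$; the restriction of $\delta$ to this piece factors through a Whitehead product against the Bott generator, whose image in $\pi_{2n+d}\Ss$ is classically the image of $J$. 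To make this precise, I would follow the strategy of the authors' previous paper \cite{Boundaries} and lift the entire cofiber sequence to the category $\Syn$ of synthetic spectra based on the Adams-Novikov filtration, then analyze the lifted synthetic homotopy groups.

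The main obstacle is ruling out contributions to $\ker(\mathrm{unit})$ from the higher cells of $\BOn$. Such contributions are governed by (synthetic) differentials and multiplicative extensions in the Adams-Novikov spectral sequence for the Thomified skeleton pieces, and their vanishing in the specified degree range is a consequence of the sublinear vanishing line on the $\mathrm{E}_\infty$-page of the Adams-Novikov spectral sequence for $\Ss$ guaranteed by the Devinatz-Hopkins-Smith Nilpotence Theorem \cite{DHS}. This sublinear vanishing line furnishes the function $\epsilon(n)$. For $k > 13/30$, one can extract an explicit numerical value of $\epsilon$ by direct computation on the early pages; for $5/12 < k \le 13/30$ only the abstract sublinearity is available, which is why the corresponding $N_k$ is inexplicit.
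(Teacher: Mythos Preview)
Your plan has two genuine gaps.

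For direction (a), the argument is backwards in two ways. The $e$-invariant \emph{detects} $\mathrm{Im}(J)$ rather than annihilating it (already $\eta \in \mathrm{Im}(J)$ maps nontrivially to $\mathrm{KO}$), so $\mathrm{Im}(J)$ is not $\ker(\Ss \to \mathrm{KO})$. And even if it were, the factorization $\Ss \to \MOn \to \mathrm{KO}$ only yields $\ker(\Ss \to \MOn) \subseteq \ker(\Ss \to \mathrm{KO})$, which is the wrong containment. The paper proves (a) instead from the bar presentation $\MOn \simeq \Ss \otimes_{\Sigma^\infty_+ \mathrm{O}\langle n-1\rangle} \Ss$: the classical $\mathrm{Im}(J)$ is precisely the image of the stable $J$-map $R = \Sigma^\infty \mathrm{O}\langle n-1\rangle \to \Ss$ restricted to the linear Goodwillie summand $B \subset R$, and anything in the image of this map is killed by the unit of $\MOn$ by construction.

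For direction (b), the structural claim underlying your argument is false. The skeletal filtration of $\BOn = \tau_{\ge n}\BO$ has cells in degrees $n, n+1, n+2, n+4, \ldots$ following Bott periodicity, \emph{not} only in degree $n$ and degrees $\ge 2n$; so in degree $2n+d+1$ there is no way to isolate a ``bottom-cell contribution'' to $\pi_*C$. What does have the gap structure you want is the \emph{bar} filtration: its $k$th layer involves $R^{\otimes k}$, which is $(k(n-1))$-connective. Through degree $3n-2$ this collapses to a three-term complex (this is \cite[Theorem~5.2]{Boundaries}), and the kernel of the unit then consists of the image of the stable $J$-map together with certain Toda brackets $d_2(x)$. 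Even so, the image of the stable $J$ is strictly larger than $\mathrm{Im}(J)$ a priori: Goodwillie calculus splits $\pi_m R \cong \pi_m B \oplus \pi_m D_2(B)$, and the heart of the paper is showing that both the $D_2(B)$-contribution and the Toda brackets have $\F_p$-Adams filtration roughly $n$. This is done by lifting $J$ to a map $\wt{J}$ in $\F_p$-synthetic spectra (not Adams--Novikov synthetic) carrying a filtration shift of order $n/(2p-2)$, and then rewriting the Toda bracket so that it no longer uses the ring structure on $J$ and hence lifts synthetically. The sublinear Adams--Novikov vanishing curve from \cite{DHS} enters only at the final step, inside the estimate for the $v_1$-banded line $\Gamma_p$ in the $\F_p$-Adams spectral sequence; it is not used to rule out higher filtration pieces directly.
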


A more precise version of \Cref{thm:intro-MO} is stated as \Cref{thm:mon-unit}, and its proof occupies all of the paper subsequent to \Cref{sec:inertia}.

\begin{rmk}
Our results were anticipated in a 1985 book by Stephan Stolz \cite{StolzBook}, who sketched a proof that \Cref{thm:intro-MO} would follow from certain classic conjectures about the mod $2$ Adams spectral sequence for the $2$-completed sphere \cite[pp. XX-XXI]{StolzBook}.
%Our overarching proof strategy is similar in spirit to Stolz's sketch, but an application of the key new technique introduced in \cite{Boundaries} allows us to avoid assuming these still out-of-reach conjectures.

%Specifically, we manipulate a Toda bracket, a priori defined in terms of $\mathbb{E}_\infty$-ring structures, into a form that may be lifted to Pstr\k{a}gowski's category of $\mathbb{F}_2$-synthetic spectra \cite{Pstragowski}.
\end{rmk}

We are hopeful that one day Stolz's original sketch may be realized, and specifically that the following will be proved:

\begin{cnj} \label{cnj:intro-Adams}
The $\mathrm{E}_{\infty}$-page of the $\mathbb{F}_2$-Adams spectral sequence for the sphere admits a line of slope $1/6$ above which every class is $v_1$-periodic.
\end{cnj}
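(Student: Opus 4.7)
The plan is to approach this chromatically, since the slope $1/6 = 1/|v_2|$ is exactly the slope naturally associated to $v_2$-periodicity. Reformulating the statement: letting $F$ denote the fiber of the unit map $\Ss \to L_1 \Ss$ of $2$-completed spectra, a class on the $\mathrm{E}_\infty$-page of the $\F_2$-Adams spectral sequence for $\Ss$ is $v_1$-periodic if and only if it has trivial image under the map induced by $F \to \Ss$. Thus the conjecture reduces to showing that the $\F_2$-Adams spectral sequence for $F$ admits a vanishing line of slope $1/6$ above some intercept.

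To establish such a vanishing line, I would construct a $\tmf$-based Adams resolution $\Ss \to \tmf^{\smsh \bullet + 1}$ and base change it along $F \to \Ss$. Because $\tmf$ is a $v_2$-periodic ring spectrum whose Hurewicz image detects the $v_2$-torsion obstructing $v_1$-periodicity, each $\tmf^{\smsh k+1}$-layer should contribute to the $\F_2$-Adams $\mathrm{E}_2$-page of $F$ only along a line of slope controlled by $|v_2| = 6$. I would combine this with Mahowald's $\mathrm{bo}$-resolution techniques, which propagate $v_1$-periodic information through the cooperations, and with synthetic-spectra machinery in the spirit of the present paper, which lifts algebraic vanishing lines to $\mathrm{E}_\infty$-vanishing lines by controlling Toda brackets and differentials in a structured deformation.

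The main obstacle will be proving the sharp slope $1/6$ vanishing line at the algebraic level. This requires a fine understanding of the Hopf algebroid of $\tmf$-cooperations, and in particular a sublinear $\Ext$-vanishing estimate there of exactly the right slope, a problem which remains essentially open despite substantial work of Bruner--Rognes, Bhattacharya--Mahowald, Culver, and others. A secondary obstacle is the passage from $\mathrm{E}_2$ to $\mathrm{E}_\infty$: even granting the algebraic vanishing line, ruling out long Adams differentials supporting classes above the slope-$1/6$ line would likely require a chromatic nilpotence input in the style of Devinatz--Hopkins--Smith, ideally packaged synthetically so that Adams filtration is tracked throughout the argument.
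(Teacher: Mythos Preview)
The statement you are attempting to prove is labeled as a \emph{Conjecture} in the paper, and the paper does not prove it. Indeed, the paper explicitly says ``We are hopeful that one day Stolz's original sketch may be realized, and specifically that the following will be proved,'' and after stating the conjecture remarks that ``the prime $2$ appears substantially more subtle'' than the odd-primary analog. There is therefore no proof in the paper to compare your proposal against.

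Your write-up is not a proof but a research outline, and you yourself identify the gaps honestly: the sharp slope-$1/6$ vanishing line for the relevant $\Ext$ groups is, as you say, ``essentially open,'' and the passage from $\mathrm{E}_2$ to $\mathrm{E}_\infty$ is a second genuine obstacle. These are exactly the reasons the statement remains a conjecture. Your chromatic reformulation via the fiber $F$ of $\Ss \to L_1\Ss$ and the idea of using a $\tmf$-resolution are reasonable heuristics for why one expects slope $1/6$, but nothing in the outline closes either gap. In particular, the phrase ``each $\tmf^{\smsh k+1}$-layer should contribute \ldots\ only along a line of slope controlled by $|v_2|=6$'' is precisely the hard unproved assertion, not a step that follows from known results.
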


The reader may note that the existence of $v_2$-periodic families precludes any such line of slope less than $1/6$.
The direct odd primary analog of \Cref{cnj:intro-Adams} is a result of the first author \cite[Proposition 6.3.20]{cookware}, but the prime $2$ is substantially more subtle.

Assuming \Cref{cnj:intro-Adams}, one follows \cite[loc. cit.]{StolzBook} to obtain the following improvement on \Cref{thm:intro-MO}:

\begin{cnj} \label{cnj:optimal}
  There exists a sublinear function $\epsilon(n)$ such that, if
  $0 \leq d \leq n - \epsilon(n)$,
  then the kernel of the unit map
  \[ \pi_{2n+d}\Ss \to \pi_{2n+d}\MOn \]
  is equal to the degree $2n+d$ part of the image of $J$.
\end{cnj}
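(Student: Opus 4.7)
The plan, as indicated by the authors, is to follow Stolz's 1985 sketch \cite{StolzBook}, substituting \Cref{cnj:intro-Adams} for the Adams-spectral-sequence input that otherwise limits us to the $5/12$ range of \Cref{thm:intro-MO}. The overall structure is to study the $\mathbb{F}_2$-Adams spectral sequence for the unit $\Ss \to \MOn$ and identify the kernel on $\pi_{2n+d}$ with the image of $J$.

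First I would isolate the step in the proof of \Cref{thm:intro-MO} where a sublinear vanishing line on the Adams $E_\infty$-page of the sphere is invoked, and track the source of the $2/5$ coefficient. The synthetic-spectra Toda bracket manipulation from \cite{Boundaries} is structural and should not itself force the $2/5$ bound; rather, that bound arises from the currently available vanishing data. With \Cref{cnj:intro-Adams} in hand, every permanent cycle in stem $2n+d$ either has Adams filtration at most $(2n+d)/6 + O(1)$ or represents a $v_1$-periodic class.

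The $v_1$-periodic classes are handled by classical computations at the prime $2$: Mahowald's analysis identifies $v_1$-periodic elements of $\pi_*\Ss$ with the image of $J$ and its $\eta$-multiples, all of which lie in the kernel of the unit to $\MOn$ by the standard argument that unit sphere bundles bound in $\MOn$. For the complementary classes of bounded Adams filtration, the synthetic Toda bracket argument of \Cref{thm:intro-MO} should apply in the extended range: the filtration bound $s \le (2n+d)/6 + O(1) \le n/2 + O(1)$ made available by the hypothesis $d \le n - \epsilon(n)$ is precisely what the synthetic argument needs in order to produce a nonzero image in $\pi_*\MOn$.

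The main obstacle I anticipate is verifying this last point in detail: checking that the synthetic lift of the key Toda bracket remains effective as $d$ approaches $n$, where the Ext computation for $H^*(\MOn;\mathbb{F}_2)$ becomes most delicate and one is closest to the $n$-connectivity threshold of $\BOn$. The sublinear correction $\epsilon(n)$ must be chosen to absorb both the bounded-range exceptions to the slope $1/6$ and the width of the Thom class's cohomological support, and confirming that no new synthetic obstructions appear in the wider range is the place where the argument could still fail even granted \Cref{cnj:intro-Adams}.
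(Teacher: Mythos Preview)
This statement is a \emph{conjecture} in the paper, not a theorem; the paper offers no proof beyond the one-line remark that, assuming \Cref{cnj:intro-Adams}, one may follow Stolz's sketch in \cite{StolzBook}. The paper also explicitly notes that Stolz's argument is more elementary than the synthetic machinery developed here, avoiding both higher algebra and synthetic spectra. So there is no ``paper's own proof'' to match, only a pointer to Stolz.

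Your plan to plug \Cref{cnj:intro-Adams} into the synthetic framework of this paper has a concrete quantitative gap. The synthetic lower bound produced in \Cref{prop:d1} and \Cref{prop:d2} is that kernel classes have $\F_2$-Adams filtration at least $2M - d - 2$, with $M \approx n/2$; this is roughly $n-d$, and it \emph{degrades linearly in $d$}. Combining $n-d$ with a slope-$1/6$ vanishing line from \Cref{cnj:intro-Adams} yields
\[
  n - d > \tfrac{1}{6}(2n+d) \quad\Longleftrightarrow\quad d < \tfrac{4}{7}n,
\]
not $d < n$. Your assertion that ``the filtration bound $s \le (2n+d)/6 \le n/2$ is precisely what the synthetic argument needs'' misreads the output of the synthetic argument: it does not show that every class of filtration $\le n/2$ survives to $\MOn$, it shows that kernel classes have filtration $\ge n-d$, which is strictly weaker than $n/2$ once $d>n/2$. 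The $d$-loss originates in the effectivity estimate of \Cref{lem:D2-eff} and is not removed by improving the vanishing line.

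To reach $d<n$, one needs a filtration lower bound of order $n/2$ that does \emph{not} degrade with $d$. This is presumably what Stolz's more direct Adams-spectral-sequence comparison supplies: a bound of roughly $M \approx n/2$ independent of $d$ gives $n/2 > (2n+d)/6 \Leftrightarrow d < n$. You would need to either reconstruct Stolz's argument or find a way to remove the $d$-dependence from the synthetic effectivity step; the latter is exactly the ``obstacle'' you flag in your final paragraph, and it is a genuine obstruction, not a detail.

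A minor correction: you claim the $\mu$-family (the $\eta$-multiples among $v_1$-periodic classes) lies in the kernel of the unit to $\MOn$. It does not; the Atiyah--Bott--Shapiro orientation $\MOn \to \MSpin \to \mathrm{bo}$ detects the $\mu$-family, so those classes survive (see the remark following \Cref{dfn:h}). This is exactly what lets one pass from ``$K(1)$-locally detected'' to ``in $\mathrm{Im}(J)$'' for kernel elements.
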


\begin{rmk}
The distinction between the unproven \Cref{cnj:optimal} and our main \Cref{thm:intro-MO} is the improvement of a number from $2/5$ to $1$.
We believe that the analogous statement is false for any real number larger than $1$, and this seems to be an interesting open problem.
For comments, see \Cref{qst:asymp}.
\end{rmk}

\begin{rmk}
Stolz's arguments, given \Cref{cnj:intro-Adams}, are substantially more elementary than the ones in this paper, avoiding use of both higher algebra and synthetic spectra.  However, even assuming \Cref{cnj:intro-Adams}, more sophisticated arguments seem necessary to obtain optimal values for the integers $N_k$.  In particular, the bounds we give on $N_k$ in our more precise \Cref{thm:mon-unit} are substantially stronger than the bounds that can be obtained by Stolz's argument.

It is particularly desirable to obtain sharp bounds when attempting to completely classify highly connected manifolds. This is because, if $n \ge 3$ and $n \ne 63$,\footnote{In dimension $126$, the Kervaire invariant one problem remains open.} then the only remaining obstacle to a full classification of $(n-1)$-connected $(2n)$-manifolds is the determination of certain inertia groups \cite{HighDimGeo}.  In fact, these inertia groups will be computed in all dimensions in forthcoming work of the last author and Adela YiYu Zhang, using a careful combination of the techniques of this paper with those of \cite{HighDimGeo}.
A special case of Crowley's $Q$-form conjecture, proved in the PhD thesis of Nagy \cite{NagyThesis}, provides the geometric input necessary to reduce the study of inertia groups of highly connected manifolds entirely to problems of homotopy theory.
Indeed, Crowley and Nagy have announced a complete determination of the inertia groups of $3$-connected $8$-manifolds \cite{CrowleyNagy}, and Crowley, Olbermann, and Teichner have independent work in progress determining the inertia groups of all $7$-connected $16$-manifolds.
\end{rmk}

%\begin{rmk}
%Stolz's arguments are substantially more elementary than the ones in this paper, avoiding use of both higher algebra and synthetic spectra.  However, even assuming %\Cref{cnj:intro-Adams}, more sophisticated arguments seem necessary to obtain optimal values for the integers $N_k$.

%It would be particularly desirable to obtain sharp results on the inertia groups of highly connected manifolds. 
%This is because, if $n \ge 3$ and $n \ne 63$, then the only remaining obstacle to a full classification of $(n-1)$-connected $(2n)$-manifolds is the determination of %certain inertia groups \cite{HighDimGeo}. 
%These inertia groups must be $0$ in dimensions above $464$, and we expect many of the lower dimensions to be accessible by careful combination of the techniques of %this paper with those of \cite{HighDimGeo}.
%A special case of Crowley's $Q$-form conjecture, proved in the PhD thesis of Nagy \cite{NagyThesis}, provides the geometric input necessary to reduce the study of %inertia groups of highly connected manifolds entirely to problems of homotopy theory.
%Indeed, Crowley and Nagy have announced a complete determination of the inertia groups of $3$-connected $8$-manifolds \cite{CrowleyNagy}, and Crowley, Teichner, and %Olbermann have work in progress determining the inertia groups of all $7$-connected $16$-manifolds.
%\end{rmk}

\subsection*{Acknowledgements:} We thank Diarmuid Crowley, Manuel Krannich, Alexander Kupers, Haynes Miller, and Oscar Randal-Williams for helpful conversations. We would also like to thank Adela YiYu Zhang for useful feedback on a previous version of this work. The middle author is supported by NSF DMS-1803273, and the last author by an NSF GRFP fellowship under Grant No. 1745302.

\subsection*{Conventions:} To keep this paper reasonably succinct, we assume the reader to be comfortable with both higher algebra in the language of \cite{HA} and with the $\mathbb{F}_p$-synthetic spectra of \cite{Pstragowski}.  For an introduction to synthetic spectra using our notational conventions, the reader is referred to \cite[\S 9]{Boundaries}.  We shall also have occasion to reference results from Sections $5$ and $10$, as well as Appendix B, of \cite{Boundaries}.

%%% Local Variables:
%%% mode: latex
%%% eval: (visual-line-mode t)
%%% eval: (auto-fill-mode 0)
%%% End:

\section{The geometry of inertia groups}\label{sec:inertia}
For any integer $n \ge 3$, we define $\MOn$ to be the Thom spectrum associated to the canonical map $\tau_{\ge n} \BO \to \BO$.
In \Cref{sec:mon-unit} - \Cref{sec:lines}, we will find conditions on integers $n \ge 3$ and $d \ge 0$ such that the following hypothesis holds:

\begin{hyp} \label{hyp}
The kernel of the unit map
\[\pi_{2n+d} \Ss \to \pi_{2n+d} \MOn\]
is exactly the degree $2n+d$ component of the image of $J$.
\end{hyp}

In this section, we explain, for fixed $n$ and $d$, the geometric consequences of \Cref{hyp}.
In terms of the results stated in the Introduction, we prove that \Cref{thm:intro-MO} implies \Cref{thm:intro-boundary}, which in turn implies \Cref{thm:intro-inertia}.

First, let us note the following straightforward result, first spelled out explicitly by Stolz.
\begin{prop}[Stolz]
Suppose \Cref{hyp}, and let $M$ be an $(n-1)$-connected, smooth, compact, oriented $(2n+d+1)$-manifold with boundary an exotic sphere $\Sigma$.  Then $\Sigma$ must bound a parallelizable manifold.
\end{prop}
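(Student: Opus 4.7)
The plan is to apply the Pontryagin--Thom construction to translate the geometric bounding problem into a question about the unit map $\Ss \to \MOn$, which is exactly what \Cref{hyp} controls. First, since $\Sigma$ is a high-dimensional homotopy sphere, it is stably parallelizable by the classical theorem of Kervaire--Milnor. Pick a framing $f$ of its stable normal bundle and form the resulting Pontryagin--Thom class $[\Sigma, f] \in \pi_\ast \Ss$. Different choices of $f$ change $[\Sigma, f]$ only by elements of $\im(J)$, so the class of $\Sigma$ in $\coker(J)$ is independent of $f$, and by Kervaire--Milnor, $\Sigma$ bounds a parallelizable manifold if and only if this $\coker(J)$-class vanishes. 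It therefore suffices to show $[\Sigma, f] \in \im(J)$.

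To do so, we use $M$ to exhibit $[\Sigma, f]$ in the kernel of the unit map $\pi_\ast \Ss \to \pi_\ast \MOn$. Since $M$ is $(n-1)$-connected, the obstructions to lifting the classifying map $\nu_M \colon M \to \BO$ of its stable normal bundle through the $n$-connective cover $\BOn = \tau_{\ge n}\BO \to \BO$ live in $H^{k+1}(M; \pi_k\BO)$ for $k < n$ and vanish for $k \le n-2$ by connectivity; the remaining potential obstruction in degree $n$ can be arranged to vanish compatibly with the framing $f$ on $\partial M = \Sigma$ (using a relative interpretation in which the framing provides the required nullhomotopy on the boundary). The resulting $\BOn$-structure on $M$ restricting to $f$ on the boundary exhibits $M$ as an $\MOn$-null-bordism of $(\Sigma, f)$; equivalently, the pair $(M, f)$ represents a class in $\pi_\ast(\MOn/\Ss)$ whose boundary in the long exact sequence associated to the cofiber sequence $\Ss \to \MOn \to \MOn/\Ss$ is precisely $[\Sigma, f]$. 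Either way, $[\Sigma, f]$ lies in the kernel of the unit.

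Applying \Cref{hyp} identifies this kernel with $\im(J)$, so $[\Sigma, f] \in \im(J)$, the $\coker(J)$-class of $\Sigma$ vanishes, and $\Sigma$ bounds a parallelizable manifold, as desired. The principal subtlety, and the only nontrivial geometric input beyond \Cref{hyp} itself, is ensuring the compatible lift of $\nu_M$ through $\BOn$ in the second step; this is a standard but delicate relative obstruction-theoretic argument that relies essentially on the framing of the boundary to kill the single potentially obstructing class in degree $n$.
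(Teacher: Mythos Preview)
Your argument is correct and follows the same route as the paper's sketch: frame $\Sigma$, use $M$ to produce a class in $\pi_{2n+d}(\MOn/\Ss)$ whose boundary is $[\Sigma,f]$, and then invoke the hypothesis to place $[\Sigma,f]$ in $\im(J)$.

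One small correction to your obstruction-theory discussion: the homotopy fiber of $\BOn \to \BO$ has $\pi_k = 0$ for all $k \ge n-1$, so there is in fact no ``remaining potential obstruction in degree $n$''. The $(n-1)$-connectivity of $M$ already kills every obstruction to the lift, absolute or relative, and the boundary framing is not needed at that step (it is needed only to make the pair represent a class in $\MOn/\Ss$ rather than merely in $\MOn$). This does not damage the proof, but it misidentifies where the subtlety lies.
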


\begin{proof}
This is \cite[Satz 1.7]{StolzBook}.  The basic idea is to consider the long exact sequence associated to the unit map of $\MOn$, which is of the form
\[\pi_{2n+d+1} \Ss \to \pi_{2n+d+1}\MOn \to \pi_{2n+d+1} \left(\MOn/\Ss \right) \to \pi_{2n+d} \Ss.\]
By the Pontryagin-Thom construction, classes in $\pi_{2n+d+1} \left(\MOn/\Ss \right)$ may be interpreted as bordism classes of $(n-1)$-connected, smooth $(2n+d)$-manifolds that have boundary an exotic sphere equipped with some framing.  The map to $\pi_{2n+d} \Ss$ then records the class of that exotic sphere in framed bordism.

We may arbitrarily equip the boundary $\Sigma$ of the manifold $M$ in question with a framing, thus obtaining a class in $\pi_{2n+d+1} \left(\MOn/\Ss \right)$.
\Cref{hyp} ensures that the image in $\pi_{2n+d} \Ss$ must be in the image of $J$, which means it is framed cobordant to a standard sphere equipped with some framing.
It therefore follows from Kervaire--Milnor \cite{KervaireMilnor} that $\Sigma$ bounds a parallelizable manifold.
\end{proof}

Next, we recall a classic geometric argument using the construction of a \emph{modified mapping torus}.  To the best of our knowledge, an argument of this form first appeared in \cite{WallInertia} (cf. \cite[15.5--15.10]{StolzBook}).

\begin{prop}
Suppose that $M$ is an $(n-1)$-connected, smooth, closed, oriented $(2n+d)$-manifold, and that $\Sigma \in \Theta_{2n+d}$ is a class in the inertia group of $M$.  Then $\Sigma$ is the boundary of an $(n-1)$-connected, smooth, oriented $(2n+d+1)$-manifold.
\end{prop}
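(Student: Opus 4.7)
The plan is to give a classical geometric construction of the bounding manifold, the \emph{modified mapping torus} associated to the diffeomorphism $\phi : M \sharp \Sigma \xrightarrow{\cong} M$ witnessing $\Sigma \in I(M)$.

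First I would form a smooth, oriented $(2n+d)$-cobordism $V$ from $M \sqcup \Sigma$ to $M \sharp \Sigma$ by attaching a single $1$-handle to $(M \sqcup \Sigma) \times [0,1]$, with the two attaching disks placed in $M \times \{1\}$ and $\Sigma \times \{1\}$ respectively; geometrically this realizes the connected sum operation. Using $\phi$, I would then reinterpret the outgoing boundary component $M \sharp \Sigma$ of $V$ as a copy of $M$, so that $\partial V = M_0 \sqcup \Sigma \sqcup M_1$ with $M_0, M_1$ both diffeomorphic to $M$. Identifying $M_0$ with $M_1$ via the identity diffeomorphism of $M$ yields a smooth, oriented $(2n+d)$-manifold $W_1$ with $\partial W_1 = \Sigma$, as desired.

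The main subtlety is arranging that the resulting manifold be $(n-1)$-connected. Since $M$ is $(n-1)$-connected and $\Sigma$ is a homotopy $(2n+d-1)$-sphere, the cobordism $V$ deformation retracts onto $M \vee \Sigma$ and is itself $(n-1)$-connected. However, gluing two boundary copies of $M$ together introduces exactly one new generator in $\pi_1(W_1)$, represented by a loop traversing $V$ from $M_0$ to $M_1$ and closing up via the identification. I would kill this generator by attaching a single $2$-handle along an embedded representative of the loop in the interior of $W_1$; general position in dimension at least $2n \geq 6$ guarantees such an embedding exists and the attachment does not affect $\partial W_1$. Any residual low-dimensional homology can then be eliminated by surgery below the middle dimension, a standard procedure in this high-dimensional setting. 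The most delicate aspect of the plan is the careful bookkeeping of these surgeries to ensure that full $(n-1)$-connectivity is achieved without perturbing the boundary $\Sigma$.
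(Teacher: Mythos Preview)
Your approach is correct and is a variant of the paper's modified mapping torus, with two points to tighten.

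First, one cannot ``attach a $2$-handle'' along a circle in the \emph{interior}; handles are attached on the boundary. What you intend, and what works, is \emph{surgery} on an embedded circle generating $\pi_1(W_1)$: excise $S^1\times D^{2n+d-1}$ and glue in $D^2\times S^{2n+d-2}$, leaving $\partial W_1=\Sigma$ unchanged. Second, your concern about residual low-dimensional homology and ``careful bookkeeping'' is unnecessary: Mayer--Vietoris already gives $H_i(W_1)=0$ for $2\le i\le n-1$, and the single surgery kills $H_1\cong\mathbb{Z}$ while introducing new homology only in degree $2n+d-2\ge n$. One surgery suffices.

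The paper's version, following Wall, avoids interior surgery entirely. One rewrites $\Sigma\in I(M)$ as the existence of a self-diffeomorphism $g$ of $N\coloneqq M\setminus\mathrm{int}(D^{2n+d-1})$ whose restriction to $\partial N\cong S^{2n+d-2}$ is a clutching diffeomorphism $f$ for $\Sigma$, arranged to be the identity on the upper hemisphere $D^{2n+d-2}_+$. The mapping torus $T$ of $g$ then contains $D^{2n+d-2}_+\times S^1$ in its boundary, and attaching a genuine $2$-handle there produces the bounding manifold in one stroke: the new boundary is visibly $\Sigma$, and $(n-1)$-connectivity is immediate from the fibration $N\to T\to S^1$ together with the effect of a single $2$-handle. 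This packaging sidesteps both the auxiliary $\Sigma\times I$ piece and the interior surgery in your version.
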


\begin{proof}
Consider $\Sigma$ as obtained by gluing two $(2n+d)$-dimensional disks along an orientation-preserving diffeomorphism $f$ of $S^{2n+d-1}$.  We may assume that $f$ is the identity when restricted to the upper hemisphere $D^{2n+d-1}_+ \subset S^{2n+d-1}$.

Let $N$ denote $M$ with a disk removed---specifically, we remove the disk used to form the connected sum $M \sharp \Sigma$.
Then $\partial N$ is diffeomorphic to $S^{2n+d-1}$, and the assumption that $\Sigma$ is in the inertia group of $M$ may be rephrased as the existence of an orientation-preserving diffeomorphism $g:N \to N$ such that $g|_{\partial N} = f$.

Now, we may form a new manifold $T$ from $N \times I$ by identifying $(n,1)$ with $(g(n),0)$.  By our assumption on $f$, $D^{2n+d-1}_+ \times S^1 \subset \partial T$.  We may thus attach $D^{2n+d-1} \times D^2$ along that part of the boundary, obtaining an $(n-1)$-connected $(2n+d+1)$-manifold with boundary diffeomorphic to $\Sigma$.

For more details, see either \cite[pp. 1-2]{WallInertia} or \cite[\S 15]{StolzBook}
\end{proof}

The results of this section motivate the following definition and question:
\begin{dfn}
For each integer $n \ge 3$, let $\alpha(n)$ denote the smallest integer $k$ such that the kernel
\[\mathrm{ker}\left(\pi_{k} \Ss \to \pi_{k} \MOn\right)\]
contains a class not in the image of $J$.

For a given prime $p$, we use $\alpha_p(n)$ to denote the least $k$ such that 
\[\mathrm{ker}\left(\pi_{k} \Ss_{(p)} \to \pi_{k} \MOn_{(p)}\right)\]
contains a class not in the $p$-localized image of $J$.
\end{dfn}

\begin{qst} \label{qst:asymp}
What is the asymptotic behavior of $\alpha(n)$?  Is there an infinite increasing sequence of dimensions, $n_1 \le n_2 \le \cdots$, such that 
\[\mathrm{lim}_{k \to \infty} \frac{\alpha(n_k)}{n_k} = 3?\]
\Cref{cnj:optimal} predicts that $\mathrm{liminf}_{n \to \infty} \frac{\alpha(n)}{n} \ge 3,$ but there does not appear to be any known estimate of the $\mathrm{limsup}$.
It may be enlightening, and still non-trivial, to determine the asymptotic behavior of $\alpha_p(n)$ for an odd prime $p$.
It seems likely that the asymptotics of $\alpha_p(n)$ are controlled by the $v_1$-banded vanishing line  in the mod $p$ Adams spectral sequence for the $p$-completed sphere, which is of slope $\frac{1}{|v_2|}$ \cite[Proposition 6.3.20]{cookware}.  Specifically, we conjecture that the $\mathrm{liminf}$ of $\frac{\alpha_{p}(n)}{n}$ is equal to $\frac{|v_2|}{|v_1|}$.
\end{qst}

\begin{rmk}
The arguments of this paper, and those of \cite[p. XX]{StolzBook}, suggest that any element in the kernel of the unit map for $\MOn$ %$\ker\left(\pi_{2n+\alpha(n)} \Ss \to \pi_{2n+\alpha(n)} \MOn\right)$
which is not in the image of $J$ must be of relatively large Adams filtration.
However, it is not the case that a generic element of large Adams filtration must die in the homotopy of $\MOn$.

Indeed, many well-understood elements of large Adams filtration are detected by the topological modular forms spectrum $\mathrm{tmf}.$
The Ando--Hopkins--Rezk string orientation $\mathrm{MO}\langle 8 \rangle \to \mathrm{tmf}$ ensures that classes detected in $\mathrm{tmf}$ must also be detected by $\MOn$ whenever $n \ge 8$ \cite{Ando-Hopkins-Rezk}.
\end{rmk}

\section{The bar spectral sequence approach to the unit of $\MOn$}\label{sec:mon-unit}
The remaining sections are devoted to a homotopy theoretic proof of the following theorem, which is our main result.

\begin{thm}\label{thm:mon-unit-simple}
  There exists a sublinear function $\epsilon(n)$ such that if
  $-1 \leq d \leq \frac{2}{5}n - \epsilon(n)$,
  then the kernel of the unit map
  \[ \pi_{2n+d}\Ss \to \pi_{2n+d}\MOn \]
  is equal to the degree $2n+d$ part of the image of $J$.
\end{thm}

%Our argument has several moving pieces so we begin with an outline. We will devote a subsection to realizing each point in this outline.
Our argument is a direct generalization of techniques in \cite{Boundaries}.  A quick outline follows:
\begin{enumerate}
\item At the end of this \Cref{sec:mon-unit}, we recall how \cite[Theorem 5.2]{Boundaries} reduces \Cref{thm:mon-unit-simple} % from a question about the Thom spectrum $\MOn$
   to a question about the suspension spectrum $\Sigma^{\infty} \mathrm{O} \langle n -1 \rangle$.
\item In \Cref{sec:background} we recall a bit of additional background.
Specifically, we first recall how Goodwillie calculus provides a canonical filtration of $\Sigma^{\infty} \mathrm{O} \langle n-1 \rangle$.
We then recall \cite[Lemma 10.18 and Lemma 10.19]{Boundaries}, which concern the $\mathbb{F}_p$-Adams filtrations of a map of spectra $J:\Sigma^{\infty} \mathrm{O} \langle n-1 \rangle \to \Ss$.
\item \Cref{sec:homotopy-argument} forms the technical heart of the paper.  We lift a diagram of spectra to a diagram of synthetic spectra, generalizing the main ideas of \cite[\S10]{Boundaries}.  The results of our arguments are lower bounds on the $\F_p$-Adams filtrations of elements in the kernel of $\pi_{2n+d}\Ss \to \pi_{2n+d}\MOn$.
\item In the final \Cref{sec:lines}, we apply known vanishing lines in the $\F_p$-Adams spectral sequences for spheres to complete the proof.
\end{enumerate}
\begin{comment}
\begin{enumerate}
\item We recall how \cite[Theorem 5.2]{Boundaries} manipulates the expression of $\MOn$ as a Thom spectrum so that \Cref{thm:mon-unit} becomes a question about $\Sigma^\infty \mathrm{O}\langle n-1 \rangle$.
\item We recall some background material which will be necessary in the later subsections.
  In particular, we use Goodwillie calculus to provide an explicit understanding of $\Sigma^\infty \mathrm{O}\langle n-1 \rangle$ in terms of simpler objects.
\item We recall the results on the $\F_p$-Adams filtration of the map $J$ (and associated technical issues) from \cite[Lemma 10.4]{Boundaries}.
\item We use the material from (3) to provide a lower bound on the $\F_p$-Adams filtrations of elements in the kernel of the unit. This subsection features a more sophisticated instance of the technique of lifting diagram to the synthetic category, which was first used in \cite[Section 10]{Boundaries}.
\item Using our knowledge of vanishing lines in the Adams spectral sequence we show that the lower bound proved in (4) suffices to conclude.
\end{enumerate}
\end{comment}

In addition to our brief recollections of the main results of \cite[\S 5,\S 10]{Boundaries}, we will assume the reader has a general familiarity with both Goodwillie calculus and synthetic spectra. For background we suggest the reader consult \cite[Chapter $6$]{HA}, \cite{KuhnTAQ}, and \cite[\S9]{Boundaries}. 

%% \begin{rmk}
%%   As explained in discussion around \Cref{cnj:intro-Adams} and \Cref{cnj:optimal}, the authors believe that the statement of \Cref{thm:mon-unit-simple} can be improved in several ways.
%%   First, it seems likely that the sublinear error term $\epsilon(n)$ is at most logarithmic in size.
%%   Second, the leading linear term can likely be improved from $\frac{2}{5}$ to $1$.
%% 	Both of these hopes may follow from an improved understanding of the $\mathbb{F}_2$-Adams spectral sequence for the $2$-completed sphere.
%% %  At the end of the final subsection of this section we will discuss how some aspects of this would follow from an improved understanding of the Adams spectral sequence for the sphere.
%% \end{rmk}

We will prove \Cref{thm:mon-unit-simple} as a corollary of the following more comprehensive statement, which treats the $p$-local components of the problem separately:

\begin{comment}
\begin{thm}\label{thm:mon-unit}
  Let $K$ denote the kernel of the unit map
  \[ \pi_{2n+d}\Ss \to \pi_{2n+d}\MOn, \]
  and $K_{(p)}$ its $p$-local summand.
  These groups satisfy the following conditions:
  \begin{enumerate}
  \item $\mathrm{Im}(J) \subseteq K$.
  \item There exists a sublinear function $\epsilon(n)$ such that if $1 \leq d \leq \frac{2}{5}n - \epsilon(n)$, then
    \[ K_{(2)} = \mathrm{Im}(J)_{(2)}. \]
  \item If $p = 2$ and $1 \le d < \frac{4}{13}n - \frac{50}{13} - \frac{30}{13} \log_2(3n)$,
    then $K_{(2)} = \mathrm{Im}(J)_{(2)}$.
  \item If $p = 3$, $n \geq 60$ and $1 \leq d \leq n-2$, then $K_{(3)} = \mathrm{Im}(J)_{(3)}$.
  \item If $p = 5$, $n \geq 192$ and $1 \leq d \leq n-2$, then $K_{(5)} = \mathrm{Im}(J)_{(5)}$.
  \item If $p = 7$, $n \geq 144$ and $1 \leq d \leq n-2$, then $K_{(7)} = \mathrm{Im}(J)_{(7)}$.
  \item If $p = 11$, $n \geq 100$ and $1 \leq d \leq n-2$, then $K_{(11)} = \mathrm{Im}(J)_{(11)}$.
  \item If $p = 13$, $n \geq 120$ and $1 \leq d \leq n-2$, then $K_{(13)} = \mathrm{Im}(J)_{(13)}$.    
  \item If $p \geq 17$ and $1 \leq d \leq n-2$, then $K_{(p)} = \mathrm{Im}(J)_{(p)}$.
  \end{enumerate}
\end{thm}
\end{comment}
\begin{thm}\label{thm:mon-unit}
  Let $K$ denote the kernel of the unit map
  \[ \pi_{2n+d}\Ss \to \pi_{2n+d}\MOn, \]
  and $K_{(p)}$ its $p$-local summand.
  These groups satisfy the following conditions:
  \begin{enumerate}
  \item If $-n-1 \le d$, then $\mathrm{Im}(J) \subseteq K$.\footnote{See, e.g., \cite[Lemma 6.1(ii)]{GRAbelianQuotients}.}
  \item There exists a sublinear function $\epsilon(n)$ such that if $2 \leq d \leq \frac{2}{5}n - \epsilon(n)$, then
    \[ K_{(2)} = \mathrm{Im}(J)_{(2)}. \]
  \item If $p = 2$, $n \geq 6$ and $2 \le d < \frac{4}{13}n - \frac{60}{13} - \frac{30}{13} \log_2(3n-2)$,
    then $K_{(2)} = \mathrm{Im}(J)_{(2)}$.
  \item If $p = 3$, $n \geq 60$ and $2 \leq d \leq n-4$, then $K_{(3)} = \mathrm{Im}(J)_{(3)}$.
  \item If $p = 5$, $n \geq 176$ and $2 \leq d \leq n-4$, then $K_{(5)} = \mathrm{Im}(J)_{(5)}$.
  \item If $p = 7$, $n \geq 120$ and $2 \leq d \leq n-4$, then $K_{(7)} = \mathrm{Im}(J)_{(7)}$.
  \item If $p = 11$, $n \geq 100$ and $2 \leq d \leq n-4$, then $K_{(11)} = \mathrm{Im}(J)_{(11)}$.
  \item If $p = 13$, $n \geq 120$ and $2 \leq d \leq n-4$, then $K_{(13)} = \mathrm{Im}(J)_{(13)}$.    
  \item If $p \geq 17$ and $2 \leq d \leq n-4$, then $K_{(p)} = \mathrm{Im}(J)_{(p)}$.
  \end{enumerate}
\end{thm}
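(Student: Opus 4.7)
For part (1), the easy inclusion, I would argue that every class in $\mathrm{Im}(J)$ in the stated range is hit, through the restricted $J$-map of the paper, by a class of $\pi_{2n+d} \Sigma^\infty \mathrm{O}\langle n-1\rangle$, and then observe that the Thom class of the tautological $(\tau_{\ge n}\BO)$-bundle forces this source class to die under the unit to $\MOn$.

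For parts (2)-(9), my plan is to execute the four-step outline given at the start of this section. The first step is to invoke \cite[Theorem 5.2]{Boundaries}, which identifies, in the relevant range of degrees, the cofiber $\MOn/\Ss$ of the unit with the cofiber of the restricted $J$-map $\Sigma^\infty \mathrm{O}\langle n-1\rangle \xrightarrow{J} \Ss$. A long-exact-sequence argument then reduces the problem to showing that any class in $\pi_{2n+d}\Sigma^\infty \mathrm{O}\langle n-1\rangle$ whose image under $J$ is not already in $\mathrm{Im}(J)$ must itself vanish. The second step would be to combine the Goodwillie tower of $\Sigma^\infty_+ \mathrm{O}\langle n-1\rangle$, whose layers are recalled in Section 4, with the $\F_p$-Adams filtration estimates of \cite[Lemmas 10.18 and 10.19]{Boundaries} for $J$ restricted to each Goodwillie layer, to set up the input data for the main filtration bound.

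The technical heart of the proof, and where I expect the main obstacle to sit, is the next step: constructing a nullhomotopy in the ordinary stable category witnessing a well-chosen Toda-bracket relation, and then lifting the entire diagram---domain, target, $J$-map, nullhomotopy, and auxiliary data---to Pstr\k{a}gowski's category of $\F_p$-synthetic spectra. Following \cite[\S 10]{Boundaries}, I would use the $\E_\infty$-ring structure on $\MOn$ and the multiplicative behavior of the Goodwillie filtration to promote each piece of the diagram individually. The synthetic bidegree of the lifted class then translates into a nontrivial lower bound on the $\F_p$-Adams filtration of the original class in $\pi_{2n+d}\Ss$. The delicate point will be arranging the Toda bracket rigidly enough that every piece of the witnessing diagram lifts, while still capturing a nontrivial obstruction; this is precisely where the advance over Stolz's conditional sketch lives.

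The final step, to be carried out in Section 6, is to compare this filtration lower bound with known vanishing lines on the $\F_p$-Adams $E_\infty$-page of $\Ss$. At odd primes $p \ge 3$, explicit $v_1$-banded vanishing lines of slope $1/(2p-2)$ produce the concrete numerical ranges of parts (4)-(9). At $p = 2$, part (3) follows from the explicit vanishing line discussed in \cite[Appendix B]{Boundaries}, while the asymptotically sharp part (2) relies on the sublinear vanishing line above $v_1$-periodic classes whose existence is guaranteed, in the absence of \Cref{cnj:intro-Adams}, by the nilpotence theorem of Devinatz--Hopkins--Smith. In every case, the range of $d$ has been chosen precisely so that the filtration lower bound lands strictly above the vanishing line, leaving no room for a non-$J$-image element of $K_{(p)}$.
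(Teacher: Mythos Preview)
Your outline tracks the paper's four-step strategy, but the first step contains a genuine misreading that propagates into a gap.  You say that \cite[Theorem~5.2]{Boundaries} identifies $\MOn/\Ss$ with the cofiber of $J:R \to \Ss$ in the relevant range; it does not.  That theorem presents $\MOn$, through degree $3n-2$, as a \emph{three}-term complex $R^{\otimes 2} \xrightarrow{m - 1\otimes J} R \xrightarrow{J} \Ss$ with a specified nullhomotopy ``can''.  The two-term cofiber description you invoke is only valid through degree $2n-2$, and the theorem is about degrees $2n+d$ with $d \ge 0$, where $R^{\otimes 2}$ contributes.  Consequently the kernel $K$ has \emph{two} sources: elements in the image of $\pi_{2n+d}J$ (your Type~1), and elements of the form $d_2(x) = \langle x,\, m - 1\otimes J,\, J\rangle$ for $x \in \pi_{2n+d-1}(R^{\otimes 2})$ satisfying $d_1(x)=0$ (Type~2).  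Your reduction ``any class in $\pi_{2n+d}R$ whose image under $J$ is not in $\mathrm{Im}(J)$ must itself vanish'' addresses only Type~1 and, even there, is not quite the right target: the paper does not show such images vanish but rather that they have high Adams filtration (Propositions~\ref{prop:d1} and~\ref{prop:d1-odd}).  The Type~2 case requires a separate filtration bound on $d_2(x)$ (Propositions~\ref{prop:d2} and~\ref{prop:d2-odd}), and your proposal never clearly distinguishes these two cases.

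A second, more subtle gap sits in your ``technical heart'' paragraph.  You propose to lift the Toda-bracket diagram synthetically using the $\E_\infty$-ring structure on $\MOn$ and the multiplicativity of the Goodwillie tower.  The paper's key maneuver is the opposite: the synthetic lift $\wt{J}:\Sigma^{0,M}\nu R \to \Ss$ is \emph{not} known to be a ring map, so the nullhomotopy ``can'' (which is part of the $\einu$-structure of $J$) has no obvious synthetic analog.  The paper instead decomposes ``can'' via the triangular prism of \S\ref{sec:prism} into pieces that use only the natural transformation $c:(-)^{\otimes 2} \to D_2$ and the map $\hat m$ on the target $\Ss$; it then separately establishes synthetic nullhomotopies of $\tau^k(\Sigma^{0,2M}\nu x)c$ and $\tau(\nu x)(1\otimes\wt{J})$ (Lemma~\ref{lem:nulls}), the latter requiring a torsion argument in $\pi_{*,*}\nu D_2(B)$ (Lemmas~\ref{lem:D2-homotopy} and~\ref{lem:D2-homotopy-odd}).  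This rewriting of the bracket so that it no longer depends on $J$ being a ring map is the essential advance over \cite{StolzBook}, and your proposal glosses over it.
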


\begin{rmk}
\Cref{thm:mon-unit} does not treat the cases $d=-1$, $d=0$ and $d=1$ of \Cref{thm:mon-unit-simple}.  However, these cases were analyzed in detail in \cite[Theorem 1.1]{Boundaries}.  In particular, $\mathrm{ker}\left(\pi_{2n-1}\Ss \to \pi_{2n-1} \MOn\right)$, $\mathrm{ker}\left(\pi_{2n}\Ss \to \pi_{2n} \MOn\right)$ and $\mathrm{ker}\left(\pi_{2n+1}\Ss \to \pi_{2n+1} \MOn\right)$ are known to agree with $\mathrm{Im}(J)$ when $n > 232$. 

The main theorem of \cite{HighDimGeo} is a complete analysis of $\mathrm{ker}\left(\pi_{2n-1}\Ss \to \pi_{2n-1} \MOn\right)$ for all values of $n$.  When $d<-1$, the analysis of $\mathrm{ker}\left(\pi_{2n+d}\Ss \to \pi_{2n+d} \MOn\right)$ is comparatively straightforward.
\end{rmk}

\begin{rmk}
\Cref{thm:mon-unit} implies that, if $2 \leq d \leq n-2$ and $n \ge 176$, then the cokernel of $\mathrm{Im}(J)$ in $\mathrm{ker}\left(\pi_{2n+d}\Ss \to \pi_{2n+d} \MOn\right)$ must be $2$-local.
\end{rmk}

\begin{comment}
This theorem shows that through an asymptotically larger range the kernel of the unit is only possibly nontrivial at $p=2$. Maintaining the notation from this theorem statement we raise the following question:

\begin{qst}
  Fix a prime $p$, what is the infimum of the real numbers $c$ such that there exists an infinite family of elements $\alpha_k \in \pi_{m_k}\Ss$ with $m_k \leq cn_k$ tending to infinity, such that $\alpha_k$ maps to zero in $\pi_{m_k} \MO\langle n_k \rangle$ and $\alpha_k  \not\in \mathrm{Im}(J)$?

  Is this value equal to $\frac{|v_2|}{|v_1|}$?
\end{qst}
\end{comment}

\begin{ntn}
  Throughout the remainder of the paper we establish the following notation, where $n \ge 3$ is an integer:
  \begin{itemize}
%  \item $m \coloneqq 2n+d$, \todo{move this use of m down a bit}
  \item $\on \coloneqq \tau_{\geq n-1} \Sigma^{-1} \mathrm{bo}$,
	\item $\mathrm{O} \langle n-1 \rangle \coloneqq \Omega^{\infty} \on$,
  %\item $R \coloneqq \Sigma^\infty \mathrm{O}\langle n-1 \rangle =\Sigma^{\infty} \Omega^\infty \on$. \todo{remove}
  \end{itemize}  
\end{ntn}

%%-----------------------------------------Thom spectrum manipulations----------------------------------
\subsection{Manipulating the bar spectral sequence}

Recall that $\MOn$ is by definition the Thom spectrum of the map $\tau_{\ge n} \mathrm{BO} \to \mathrm{BO}$.  Looping this map once, one obtains a map $\mathrm{O} \langle n-1 \rangle \to \mathrm{O}$.  We may compose with the classical $J$ homomorphism $\mathrm{O} \to \Omega^{\infty} \Ss$ and apply the $(\Sigma^{\infty},\Omega^{\infty})$ adjunction to obtain a map $\Sigma^{\infty} \mathrm{O} \langle n-1\rangle \to \Ss$.

\begin{ntn}
We denote by $J$ the above map $\sOf \to \Ss$.
\end{ntn}

The map $J$ is naturally one of non-unital $\mathbb{E}_\infty$-ring spectra, and there is an associated unital $\mathbb{E}_\infty$-ring map $J_+:\Sigma^{\infty}_+ \mathrm{O}\langle n-1 \rangle \to \Ss$.

According to \cite{ABGHR}, the Thom spectrum $\MOn$ can be presented as a relative tensor product $\Ss \otimes_{\spO} \Ss$, where the action on the left is given by the augmentation and the action on the right by $J_+$.
In \cite{Boundaries}, the authors manipulated this tensor product expression to prove the following:

\begin{prop}[{\cite[Theorem 5.2]{Boundaries}}]
  After applying $\tau_{\leq 3n - 2}$, there is an equivalence of $\mathbb{E}_0$-algebras between $\MOn$ and the three term complex produced from
  \begin{center}\begin{tikzcd}[sep=huge]
      \sOf^{\otimes 2} \ar[r, "m - 1 \otimes J"] \ar[rr, bend left, "0"{name=D}] & \sOf \ar[r, "J"] \ar[Leftrightarrow, from=D, "\mathrm{can}"] & \Ss,
  \end{tikzcd} \end{center}
  where the nullhomotopy ``can'' comes from the canonical filling of the square
  \begin{center}
    \begin{tikzcd}[sep=huge]
      \sOf^{\otimes 2} \ar[r, "1 \otimes J"] \ar[d, "m"] & \sOf \ar[dl, Leftrightarrow, "\mathrm{can}"] \ar[d, "J"] \\
      \sOf \ar[r, "J"] & \Ss,
    \end{tikzcd}
  \end{center}
  which is part of the data making $J$ a map of non-unital $\mathbb{E}_\infty$-algebras.\footnote{In \cite{Boundaries} this proposition was stated with $n$ congruent to $-1$ mod $4$, however that assumption was not used in the proof.}
\end{prop}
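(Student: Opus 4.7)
The strategy is to present $\MOn$ as a two-sided bar construction and truncate its skeletal filtration. By \cite{ABGHR} we have $\MOn \simeq \Ss \otimes_{R_+} \Ss$, with the left action via the augmentation and the right action via $J_+$. This relative tensor product is the geometric realization of the bar complex $\sBar_\bullet(\Ss, R_+, \Ss)$, whose $k$-simplices are canonically equivalent to $R_+^{\otimes k}$. Passing to the augmentation ideal $R$, the skeletal filtration on this realization has associated graded pieces $\mathrm{sk}_k/\mathrm{sk}_{k-1} \simeq R^{\otimes k}[k]$, which is the standard formula for bar filtrations of augmented $\mathbb{E}_1$-algebras.

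Next comes the connectivity estimate. Since $B = \tau_{\geq n-1}\Sigma^{-1}\mathrm{bo}$ is $(n-1)$-connective, the space $\mathrm{O}\langle n-1\rangle = \Omega^\infty B$ is $(n-2)$-connected, so $R = \Sigma^\infty \mathrm{O}\langle n-1\rangle$ is $(n-1)$-connective. Consequently $R^{\otimes k}$ is $k(n-1)$-connective, and $R^{\otimes k}[k]$ is $kn$-connective. For $k \geq 3$ this exceeds $3n-2$, so every such associated graded piece vanishes after $\tau_{\leq 3n-2}$. Thus $\tau_{\leq 3n-2}\MOn \simeq \tau_{\leq 3n-2}\mathrm{sk}_2$, where the 2-skeleton is the iterated extension assembled from the three-term diagram built out of $\Ss$, $R[1]$, and $R^{\otimes 2}[2]$.

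What remains is to identify the attaching maps with those stated. The bar differential in simplicial degree one records the right action, producing the map $R \to \Ss$ given by $J$. The alternating sum of face maps in simplicial degree two produces the difference $m - 1 \otimes J$ between the multiplication on $R$ and the right action on $\Ss$. The canonical nullhomotopy of the composite is supplied by the coherence datum making $J$ a map of non-unital $\mathbb{E}_\infty$-algebras (indeed only $\mathbb{E}_2$ is used): it filles the square in the statement and witnesses $J \circ m \simeq J \circ (1 \otimes J)$. The $\mathbb{E}_0$-algebra structure on $\MOn$ is encoded by the inclusion of the $0$-skeleton $\Ss \hookrightarrow \mathrm{sk}_2$, which is the unit map, and is preserved under both the truncation and the identification with the three-term complex.

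The main technical subtlety I expect is not the connectivity bookkeeping but the coherent upgrade: one must verify in the $\infty$-category of $\mathbb{E}_0$-algebras that the truncated realization agrees with the iterated extension presented by the explicit three-term complex, with the displayed attaching maps and nullhomotopy arising uniformly from the $\mathbb{E}_\infty$-structure on $J$. Once the bar-filtration formalism is set up appropriately, the connectivity argument collapses all higher simplicial data and reduces everything to the explicit low-filtration picture; the remaining content is higher-coherence tracking rather than computation.
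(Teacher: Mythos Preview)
Your proposal is correct and matches the approach indicated in the paper. Note that this paper does not itself prove the proposition; it is cited as \cite[Theorem 5.2]{Boundaries}, and the surrounding text simply records that the proof there proceeds by ``manipulating the tensor product expression'' $\MOn \simeq \Ss \otimes_{\spO} \Ss$ of \cite{ABGHR}. Your sketch---truncating the bar/skeletal filtration, using the $(n-1)$-connectivity of $R$ to kill associated graded pieces $\Sigma^k R^{\otimes k}$ for $k \geq 3$, and reading off the low-degree attaching maps as $J$ and $m - 1 \otimes J$ with the nullhomotopy supplied by the multiplicativity of $J$---is precisely that manipulation. Your closing caveat about the higher-coherence bookkeeping (identifying the simplicial nullhomotopy with ``can'' as an $\mathbb{E}_0$-algebra datum) is exactly the residual work the cited reference carries out; no additional mathematical idea is missing.
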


Using this proposition, we learn through the range of interest that there are two sorts of elements in the kernel of the unit map for $\MOn$.  Specifically, elements of $\mathrm{ker}(\pi_{2n+d} \Ss \to \pi_{2n+d} \MOn)$ consist of 
\begin{enumerate}
\item Elements in the image of the map $\pi_{2n+d}J$. %We will sometimes refer to this as $d_1$.
\item Elements in the image of the Toda bracket
\begin{center}
  \begin{tikzcd}[sep=huge]
    \Ss^{2n+d-1} \ar[r, "x"] \ar[rr, bend left=40, "0"{name=D}] &
    \sOf^{\otimes 2} \ar[Leftrightarrow, from=D, "\text{arbitrary}"] \ar[r, "m - 1 \otimes J"] \ar[rr, bend right=40, "0"{name=H}] &
    \sOf \ar[Leftrightarrow, from=H, "\text{can}"] \ar[r, "J"] &
    \Ss,
  \end{tikzcd}
\end{center}
where $x$ and the nullhomotopy of $x(m - 1 \otimes J)$ are arbitrary.
\footnote{Note that the indeterminacy in the given bracket is exactly elements of the first kind, so this gives an unambiguous expression for the kernel of the unit.}
We will sometimes refer to this Toda bracket as $d_2(x)$ and the condition that $x (m - 1 \otimes J) = 0$ as $d_1(x) = 0$.
\footnote{Our terminology of $d_1$ and $d_2$ comes from the spectral sequence associated to the three term complex above.}
\end{enumerate}

\begin{wrn}
The use of $J$ to denote the stable map $\Sigma^{\infty} \mathrm{O} \langle n-1 \rangle \to \Ss$ introduces an unfortunate clash of notation.  Namely, classes in the image of $\pi_*$ of this stable $J$ map need not always be in the image of the unstable $J$ homomorphism $\mathrm{O} \to \Omega^{\infty} \Ss$.
It is the image of the unstable map that it is classically referred to as $\mathrm{Im}(J)$.

We will be careful to use the phrases $\mathrm{Im}(J)$ and `the image of $J$' only in accordance with their classical meaning.
However, as in item $(1)$ above, we will need to also refer to classes in the image of the stable $J$ map.
We are comfortable with this clash of notation because we will shortly prove that, in the range of degrees of \Cref{thm:mon-unit}, there is no difference between the image of the stable $J$ map and the classical image of $J$.
We hope this does not cause the reader any undue confusion.
\end{wrn}

\section{Additional background} \label{sec:background}
Before proceeding with the proof of \Cref{thm:mon-unit}, we recall a few useful results.

\subsection{Background on non-unital algebras} \label{sec:prism}
We begin with some very general statements about non-unital $\mathbb{E}_\infty$-algebras.  Our setting will be that of an arbitrary stable and presentably symmetric monoidal category $\mathcal{C}$.  In particular, the reader is encouraged to keep the examples $\mathcal{C}=\Sp$ and $\mathcal{C}=\Syn_{\F_p}$ in mind.

\begin{ntn}
  We establish notation for some of the basic structure present on any stable presentably symmetric monoidal category $\mathcal{C}$.  There is:
  \begin{itemize}
  \item For each $k \ge 0$, an endofunctor $D_k : \mathcal{C} \to \mathcal{C}$, which is given by $X \mapsto (X^{\otimes k})_{h\Sigma_k}$,
  \item a natural transformation $c : (-)^{\otimes 2} \to D_2$,
  \item an underlying object functor $U : \mathrm{CAlg}^{\mathrm{nu}}(\mathcal{C}) \to \mathcal{C}$,
  \item a multiplication natural transformation $m : (U(-))^{\otimes 2} \to U$,
  \item a squaring natural transformation $\hat{m} : D_2(U(-)) \to U(-)$,
  \item a natural homotopy between $m$ and $c \hat{m}$.
  \end{itemize}
\end{ntn}

\newcommand{\einu}{\mathbb{E}_\infty^{\mathrm{nu}}}

Specializing all of the above to a map of non-unital $\mathbb{E}_{\infty}$-algebras $f: A \to B$, we obtain the following triangular prism:
\begin{center} \begin{tikzcd} 
    A^{\otimes 2} \ar[rr, "f \otimes f"] \ar[dd, "m"] \ar[dr, "c"] & &
    B^{\otimes 2} \ar[dd, "m" above right] \ar[dr, "c"] \\
    & D_2(A) \ar[rr, "D_2(f)" below left] \ar[dl, "\hat{m}"] & &
    D_2(B) \ar[dl, "\hat{m}"] \\
    A \ar[rr, "f"] & &
    B
\end{tikzcd} \end{center}
The relevance of this is that the homotopy ``can'' used in \Cref{sec:mon-unit} is the composite
\begin{center} \begin{tikzcd}[sep=huge]
    & A \otimes B \ar[dr, "f \otimes 1"] \\
    A^{\otimes 2} \ar[drr, phantom, "(m)"] \ar[rr, "f \otimes f", ""{name=Q}] \ar[ur, "1 \otimes f"] \ar[d, "m"] & & B^{\otimes 2} \ar[d, "m"] \\
    A \ar[rr, "f"] & & B
    \ar[phantom, from=Q, to=1-2, "(\otimes)"] 
\end{tikzcd} \end{center}
where the homotopy filling the triangle comes from the monoidal structure and the homotopy filling the square is the same one which appeared in the back face of the prism above. 
Thus, we can break ``can'' up into a composite of five different homotopies.
This factorization will lie at the heart of our ability to analyze $d_2(x)$.

\begin{rmk}[{\cite[Proposition 5.4.4.10]{HA}}] \label{rmk:nonunital-aug}
  There is an equivalence of categories between $\mathrm{CAlg}^{\mathrm{nu}}(\mathcal{C})$ and the category $\mathrm{CAlg}^{\mathrm{aug}}(\mathcal{C})$ of augmented unital $\mathbb{E}_\infty$-algebras in $\C$. On underlying objects, this equivalence sends a non-unital $\mathbb{E}_\infty$-algebra $A$ to $\mathbbm{1} \oplus A$, where $\mathbbm{1}$ is the unit.
\end{rmk}

\begin{ntn}
  Let $\coprod$ denote the coproduct in $\mathrm{CAlg}^{\mathrm{nu}} (\mathcal{C})$.
\end{ntn}

\begin{rmk} \label{rmk:nonunit-coprod}
  Since $\otimes$ is the coproduct in $\mathrm{CAlg}^{\mathrm{aug}}(\mathcal{C})$, it follows from \Cref{rmk:nonunital-aug} that $U(A \coprod B) \simeq U(A) \oplus (U(A) \otimes U(B)) \oplus U(B)$.
\end{rmk}

\subsection{Background from Goodwillie calculus}
In this subsection we specialize to the case where $\mathcal{C} = \Sp$.  Here, we can use Goodwillie calculus to gain a better understanding of the functor $U$ and the natural transformation $\hat{m}$, with the ultimate goal of understanding $\sOf$ in terms of $\on$.  For the remainder of the paper, the key takeaway is \Cref{exm:goodwillie-cof}, and the reader comfortable with that example may safely ignore the somewhat technical arguments below.
We begin by fixing some notation for the Goodwillie calculus.

\begin{ntn}
  Given a functor $F : \C \to \D$ to which we may apply the Goodwillie calculus, we denote the Goodwillie tower by
  \begin{center}
  \begin{tikzcd}
    & & \mathcal{D}_n(F) \ar[d] &
    & \mathcal{D}_2(F) \ar[d] &
    \mathcal{D}_1(F) \ar[d] \\
    F \ar[r] &
    \cdots \ar[r] &
    \mathcal{P}_n(F) \ar[r] &
    \cdots \ar[r] &
    \mathcal{P}_2(F) \ar[r] &
    \mathcal{P}_1(F) \ar[r] &
    \mathcal{P}_0(F).
  \end{tikzcd}
\end{center}
  Recall that $P_0 (F)$ may be identified with the constant functor with value $F(\ast)$, where $\ast$ is the final object of $\C$ \cite[Remark 6.1.2.2]{HA}.
\end{ntn}

Let us now analyze the underlying object functor $U$ using the Goodwillie calculus. Using the equivalence between non-unital $\mathbb{E}_\infty$-algebras and augmented $\mathbb{E}_\infty$-algebras from \Cref{rmk:nonunital-aug}, we may specialize \cite[Theorem 3.10]{KuhnTAQ} to identify the Goodwillie tower of $U$ with the following diagram.\footnote{We have displayed the tower evaluated at a object $A$. Note that as long as $A$ is 1-connective the tower is convergent.}

\begin{center}
  \begin{tikzcd} [column sep=tiny]
    & & D_n(\mathrm{TAQ}(\Ss \oplus A; \bS)) \ar[d] &
    & D_2(\mathrm{TAQ}(\Ss \oplus A; \bS)) \ar[d] &
    \mathrm{TAQ}(\Ss \oplus A; \bS) \ar[d] \\
    U(A) \ar[r] &
    \cdots \ar[r] &
    \mathcal{P}_n(U)(A) \ar[r] &
    \cdots \ar[r] &
    \mathcal{P}_2(U)(A) \ar[r] &
    \mathcal{P}_1(U)(A) \ar[r] &
    0
  \end{tikzcd}
\end{center}

\begin{ntn}
For the sake of brevity, we let $T$ denote the functor $\mathrm{TAQ}(\Ss \oplus (-); \bS)$ and $\pi: U \to T$ the natural transformation given by
\[U(A) \to P_1(U)(A) \simeq T(A).\]
\end{ntn}

\begin{rmk} \label{rmk:D2U}
  Let us explain in more detail how the identification between $\mathcal{D}_2 (U)$ and $D_2 T$ may be made.

  Let $cr_2 (U) (A,B) = \mathrm{fib} (U(A \coprod B) \to U(A) \oplus (B))$ denote the second cross-effect of $U$, and let $\Omega^\infty cr_2 (U) (\Sigma^\infty A, \Sigma^\infty B) = \varinjlim_n \Omega^{2n} cr_2 (U) (\Sigma^n A, \Sigma^n B)$ denote its linearization.
  Then the general machinery of the Goodwillie calculus identifies $\mathcal{D}_2 (U) (A)$ with $(\Omega^\infty cr_2 (U) (\Sigma^\infty A, \Sigma^\infty A))_{h \Sigma_2}$.

  Since $T$ is the linearization of $U$ by definition, to identify $\mathcal{D}_2 (U)$ with $D_2 T$ it suffices to identify the second cross-effect $cr_2 (U) (A,B)$ with $U(A) \otimes U(B)$.
  This is an immediate consequence of the identification $U(A \coprod B) \simeq U(A) \oplus (U(A) \otimes U(B)) \oplus U(B)$ of \Cref{rmk:nonunit-coprod}.
\end{rmk}

Next, we recall how calculus interacts with the natural transformation $\hat{m} : D_2 U \to U$. Taking the induced map of Goodwillie towers, we obtain a diagram:

\begin{center}\begin{tikzcd}
    & \mathcal{D}_2(D_2U) \ar[dr] \ar[dd] & &
    \mathcal{D}_1(D_2U) \ar[dr, "\simeq"] \ar[dd] \\
    D_2U \ar[dd, "\hat{m}"] \ar[r] &
    \cdots \ar[r] &
    \mathcal{P}_2(D_2U) \ar[rr] \ar[dd, "\mathcal{P}_2 (\hat{m})"] & &
    \mathcal{P}_1(D_2U) \ar[dd, "\mathcal{P}_1 (\hat{m})"] \\
    & D_2T \ar[dr] & &
    T \ar[dr, "\simeq"] \\   
    U \ar[r] &
    \cdots \ar[r] &
    \mathcal{P}_2(U) \ar[rr] & &
    \mathcal{P}_1(U).
\end{tikzcd} \end{center}

Below, we will prove a lemma that simplifies this diagram to the following:

\begin{center}\begin{tikzcd}
    & D_2T \ar[dr, "\simeq"] \ar[dd, "\simeq" below left] & &
    0 \ar[dr] \ar[dd] \\
    D_2U \ar[dd, "\hat{m}"] \ar[r] &
    \cdots \ar[r] &
    D_2T \ar[rr] \ar[dd] & &
    0 \ar[dd] \\
    & D_2T \ar[dr] & &
    T \ar[dr, "\simeq"] \\   
    U \ar[r] &
    \cdots \ar[r] &
    \mathcal{P}_2(U) \ar[rr] & &
    \mathcal{P}_1(U).
\end{tikzcd} \end{center}

\begin{lem}\label{lem:goodwillie}
  We have $\mathcal{D}_1(D_2U) \simeq 0$ and the vertical map $\mathcal{D}_2(D_2U) \to D_2T$ is an equivalence.
  Moreover, the map $D_2U \to \mathcal{P}_2(D_2U) \simeq D_2(T)$ is $D_2(\pi)$.
\end{lem}

This lemma would follow easily from an appropriate version of the chain rule in Goodwillie calculus.
Unfortunately, we could not find such a version of the chain rule in the literature, so we provide a less conceptual proof below.

%The lemma follows easily from the chain rule.
%We unhappily note that the chain rule remains conjectural and provide a less conceptual proof.

\begin{proof}
%\todo{need to clean up this proof's notation a bit}
  Using that $\Sp$ is stable and $D_2$ commutes with filtered colimits, we have 
  \begin{align*}
    \mathcal{D}_1(D_2U)
    &\simeq \colim_{n \to \infty} \Omega^n D_2(U( \Sigma^n - ))
    \simeq \colim_{n \to \infty} \Omega^n D_2( \Sigma^n \Omega^n U( \Sigma^n - )) \\
    &\simeq \colim_{a,b \to \infty} \Omega^a D_2( \Sigma^a \Omega^b U( \Sigma^b - ))
    \simeq \colim_{a \to \infty} \Omega^a D_2( \Sigma^a \colim_{b \to \infty} \Omega^b U( \Sigma^b - )) \\
    &\simeq \mathcal{D}_1(D_2) \circ \mathcal{D}_1(U).
  \end{align*}  
  Since $D_2$ is homogeneous of degree 2, $\mathcal{D}_1(D_2) \simeq 0$.

  In order to show that the vertical map $\mathcal{D}_2(D_2U) \xrightarrow{\mathcal{D}_2 (\hat{m})} \mathcal{D}_2 (U) \simeq D_2T$ is an equivalence it will suffice to show that the linearization of the second cross-effect functor $cr_2(\hat{m})$ is an equivalence.
  As we saw in \Cref{rmk:D2U}, there is an identification $cr_2(U)(A,B) \simeq U(A) \otimes U(B)$. 

On the other hand, after splitting $D_2(U(A))$ and $D_2(U(B))$ off of $D_2(U(A \coprod B))$ we see that $cr_2(D_2U)(A,B)$ is the direct sum of four terms:
  \begin{align*}
    cr_2(D_2U)(A,B)
    &\simeq \quad D_2(U(A) \otimes U(B)) \quad &\oplus \quad &U(A) \otimes (U(A) \otimes U(B)) \\
    &\oplus (U(A) \otimes U(B)) \otimes U(B) \quad &\oplus \quad &U(A) \otimes U(B).
  \end{align*}  

  When restricted to the final summand, $cr_2(\hat{m})$ is the identity map.
  Since the other three terms of $cr_2(D_2U)$ have connectivity which increases by 2 for every suspension (on at least one input), these terms have trivial linearizations.
  Thus, the linearization of $cr_2 (\hat{m})$ is an equivalence, as desired.

  We now need to identify the map
  \[D_2 U \to \mathcal{P}_2 (D_2 U) \simeq \mathcal{D}_2 (D_2 U) \simeq \mathcal{D}_2 (U) \simeq D_2 T\]
  with $D_2 \pi$.
  To begin, we note that since $D_2 T$ is a quadratic functor, the map $D_2 \pi$ factorizes through $D_2 U \to \mathcal{P} _2 (D_2 U) \simeq \mathcal{D}_2 (D_2 U)$.
  As a consequence, we find that it suffices to identify the maps $\mathcal{D}_2 (D_2 \pi)$ and $\mathcal{D}_2 (\hat{m})$.
  These are determined by the linearizations of the second cross-effects, so it suffices to identifies the natural transformations between these.

  Above, we determined a natural transformation $U(A) \otimes U(B) \to cr_2 (D_2 U) (A,B)$ which induces an equivalence on linearizations.
  Moreover, under the equivalence $cr_2 (U) (A,B) \simeq U(A) \otimes U(B)$ of \Cref{rmk:D2U} which identifies $\mathcal{D}_2 (U)$ with $D_2 T$, the composite
  \[U(A) \otimes U(B) \to cr_2 (D_2 U) (A,B) \xrightarrow{cr_2 (\hat{m})(A,B)} cr_2 (U) (A,B) \simeq U(A) \otimes U(B)\]
  is the identity. On the other hand, it is straightforward to see that there is a natural equivalence $cr_2 (D_2 T) (A,B) \simeq T(A) \otimes T(B)$ for which the composite
  \[U(A) \otimes U(B) \to cr_2 (D_2 U) (A,B) \xrightarrow{cr_2 (D_2 \pi)(A,B)} cr_2 (D_2 T) (A, B) \simeq T(A) \otimes T(B) \]
  is the linearization of $U(A) \otimes U(B)$.

  Combining these identifications, we find that $cr_2(\hat{m})$ and $cr_2 (D_2 T)$ agree after linearization, as desired.
%
  %Since $\mathcal{D}_1 (D_2 U) \simeq 0$, we have equivalences $\mathcal{P}_2 (D_2 U) \simeq \mathcal{D}_2 (D_2 U) \simeq D_2 T$. \todo{Clean this part up}
  %Now, in order to identify the composite $D_2U \to \mathcal{P}_2 (D_2 U) \simeq D_2T$ with $D_2 (\pi)$, we proceed as follows.
	%Since $\mathcal{P}_2(D_2U)$ is the universal quadratic approximation of $D_2U$, the natural transformation $D_2(\pi)$ factors as
  %\[ D_2U \to \mathcal{P}_2(D_2U) \to \mathcal{P}_2(D_2T) \simeq D_2T. \]
  %Thus, it will suffice to show that the span $U \leftarrow D_2U \to D_2T$ induces the identity automorphism on the linearization of second cross effects. We must calculate the result of applying $D_2$ to the map
  %\[ U(A) \oplus (U(A) \otimes U(B)) \oplus U(B) \xrightarrow{\begin{bmatrix} \pi & 0 & 0\\ 0 & 0 & \pi \end{bmatrix}} T(A) \oplus T(B), \]
  %splitting off the parts only coming from $A$ and $B$, and then linearizing.
	%Our previous computations identify the above span as $A \otimes B \xleftarrow{1} A \otimes B \xrightarrow{1} A \otimes B$.
\end{proof}

\begin{exm}\label{exm:goodwillie-cof}
Specializing to the case of $\Sigma^{\infty} \mathrm{O} \langle n-1 \rangle$ we obtain a diagram
\begin{center}\begin{tikzcd}
    D_2(\sOf) \ar[r, "\hat{m}"] \ar[d, "D_2(\pi)"] &
    \sOf \ar[d] \ar[dr, "\pi"] \\
    D_2(\on) \ar[r] &
    \mathcal{P}_2(U)(\sOf) \ar[r] &
    \on,
\end{tikzcd}\end{center}
where the vertical maps are equivalences through degree $3n-4$ for connectivity reasons and the bottom row is a cofiber sequence. The map $\pi$ can easily be identified with the mate of the identity map on $\Omega^\infty \on = \mathrm{O} \langle n-1 \rangle$. 
\end{exm}

We can take this lemma further by noting that the bottom row is split exact on homotopy groups.

\begin{lem}\label{lem:split}
  The cofiber sequence
  \[ D_2(\on) \to \mathcal{P}_2(U)(\sOf) \to \on \]
%  is split exact on homotopy groups.
  is split exact on homotopy groups in degrees $\le 3n-4$.
\end{lem}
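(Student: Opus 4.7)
The plan is to reduce the splitting to a single application of a triangle identity for the $(\Sigma^{\infty}, \Omega^{\infty})$-adjunction. First, by \Cref{exm:goodwillie-cof}, in degrees $\le 3n-2$ the natural map $R \to \mathcal{P}_2(U)(R)$ is an equivalence, so the composite $R \to \mathcal{P}_2(U)(R) \to B$ agrees with $\pi : R \to B$ on homotopy groups through this range. It therefore suffices to show that the map $\pi_*(\pi) : \pi_* R \to \pi_* B$ is split surjective; combined with the long exact sequence associated to the cofiber sequence $D_2(B) \to \mathcal{P}_2(U)(R) \to B$, this immediately yields the claimed splitting on homotopy groups in degrees $\le 3n-2$.

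To produce the section, I will use the identification of $\pi$ with the counit $\epsilon_B : \Sigma^{\infty}\Omega^{\infty} B \to B$ recorded after \Cref{exm:goodwillie-cof}. Applying $\Omega^{\infty}$, the triangle identity for the adjunction gives a homotopy filling the composite
\[ \Omega^{\infty} B \xrightarrow{\eta_{\Omega^{\infty} B}} \Omega^{\infty} \Sigma^{\infty} \Omega^{\infty} B = \Omega^{\infty} R \xrightarrow{\Omega^{\infty} \epsilon_B} \Omega^{\infty} B, \]
identifying it with the identity of $\Omega^{\infty} B$. Since $B = \tau_{\ge n-1}\Sigma^{-1}\mathrm{bo}$ and $R$ are connective, there are natural isomorphisms $\pi_k X \cong \pi_k \Omega^{\infty} X$ for $X \in \{R, B\}$ and $k \ge 0$. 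Taking $\pi_k$ of the composite above therefore produces an honest section of $\pi_k(\pi)$ in every non-negative degree, which is more than enough.

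There is essentially no obstacle here: the content of the lemma is the elementary fact that for any connective spectrum $X$, the counit $\Sigma^{\infty}\Omega^{\infty} X \to X$ is split surjective on non-negative homotopy groups. The only thing one has to be slightly careful about is the translation between $R$ and $\mathcal{P}_2(U)(R)$, which is free in our range from the Goodwillie tower analysis already carried out in \Cref{lem:goodwillie} and \Cref{exm:goodwillie-cof}.
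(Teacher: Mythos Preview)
Your proof is correct and follows essentially the same approach as the paper: both arguments identify $\pi$ with the counit $\epsilon_B:\Sigma^{\infty}\Omega^{\infty}B\to B$ and produce a section on homotopy groups via the unit of the $(\Sigma^{\infty},\Omega^{\infty})$-adjunction (i.e., the triangle identity). Your version spells out slightly more explicitly the passage from $R$ to $\mathcal{P}_2(U)(R)$ in the given range, which the paper leaves implicit.
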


\begin{proof}
  It will suffice to show that the  map $\pi$ admits a section after applying $\Omega^\infty$.
  Since $\pi$ is the mate of the identity on $\Omega^\infty \on$, the unit of the $(\Sigma^{\infty},\Omega^{\infty})$-adjunction provides a section of $\Omega^{\infty} \pi$.
\end{proof}

\subsection{Remarks on the synthetic $D_2$}

For $p$ a prime, we note some useful features of the extended square functor $D_2$ on the presentably symmetric monoidal category $\Syn_{\F_p}$ of $\F_p$-synthetic spectra.

\begin{dfn}
For any integer $n$, the category of \emph{$n$-effective} $\mathbb{F}_p$-synthetic spectra is the smallest stable, full subcategory of $\Syn_{\F_p}$, closed under colimits, that contains $\nu(X)$ for every $n$-connective spectrum $X$.
\end{dfn}

This definition is useful because of the following lemma:
\begin{lem}
Suppose a synthetic spectrum $K$ is $n$-effective.  Then, the invert $\tau$ map
  \[ \pi_{t-s,t}(K) \to \pi_{t-s}(\tau^{-1} K) \]
	is an isomorphism whenever $t \leq n$.
\end{lem}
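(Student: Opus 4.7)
My plan is to use the universal property of $n$-effective objects to reduce to the case $K = \nu X$, then handle that base case using basic properties of $\nu$.

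First, I would verify the closure properties of the class of synthetic spectra on which the conclusion holds. Since $\pi_{t-s,t}$ is corepresented by the bigraded sphere $\Ss^{t-s,t}$, it preserves colimits and sends cofiber sequences to long exact sequences. The functor $K \mapsto \pi_{t-s}(\tau^{-1} K)$ has the same properties, because $\tau$-inversion is a symmetric monoidal left adjoint and $\pi_{t-s}$ in spectra commutes with colimits. Hence the full subcategory of $\Syn_{\F_p}$ consisting of those $K$ for which the natural comparison $\pi_{t-s,t}(K) \to \pi_{t-s}(\tau^{-1} K)$ is an isomorphism for every $t \le n$ is stable under colimits and extensions. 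By the defining universal property of the $n$-effective subcategory as the smallest such subcategory containing $\nu X$ for every $n$-connective spectrum $X$, it suffices to verify the claim when $K = \nu X$ for such an $X$.

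Second, I would handle this base case. We have $\tau^{-1} \nu X \simeq X$, so the comparison becomes the natural map from the bigraded homotopy of $\nu X$ into the ordinary homotopy of $X$. Pstr\k{a}gowski's description of $\nu X$ identifies its bigraded homotopy with the $\F_p$-Adams spectral sequence for $X$, and the comparison in question records the passage from a piece of the Adams filtration on $\pi_{t-s}(X)$ to the total group. For $X$ an $n$-connective spectrum, $\pi_k(X)$ vanishes for $k < n$ and the $\F_p$-Adams $E_2$-page is concentrated in internal degrees $t \ge n$, which leaves no room for either differentials or nontrivial filtration jumps in the range $t \le n$. A short connectivity argument then forces the comparison map to be an isomorphism throughout this range.

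The main obstacle is simply aligning the bigrading conventions on $\Syn_{\F_p}$ with those of the $\F_p$-Adams spectral sequence; the conceptual content, namely that $\nu X$ is a synthetic deformation of the Adams filtration whose bottom edge is controlled by the connectivity of $X$, is standard, and once the conventions are fixed the argument is essentially formal.
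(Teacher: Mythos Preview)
Your approach is correct and essentially identical to the paper's: both reduce to $K=\nu X$ by closure under cofiber sequences and filtered colimits, then handle the base case via the connectivity of $X$ (the paper phrases this as ``$\nu$ is a connective cover,'' which is the same content as your Adams spectral sequence argument). One small correction: corepresentable functors to abelian groups do not preserve arbitrary colimits, only filtered ones when the representing object is compact---but since you separately invoke long exact sequences for cofiber sequences, and these two closure properties together give all colimits in a stable category, the argument goes through.
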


\begin{proof}
  This property is preserved under (co)fiber sequences and filtered colimits of synthetic spectra, so it suffices to check it when $K$ is $\nu(X)$ for $X$ an $n$-connective spectrum. For this, it suffices to show that
  \[\pi_{t-s,t} (\nu(X) \otimes C\tau) \cong \Ext^{s,t} _{\A} (H_* K) = 0 \]
  whenever $t < n$.
  This is an immediate consequence of the fact that $K$ is $n$-connective.
\end{proof}

Since $\tau^{-1}:\Syn_{\mathbb{F}_p} \to \Sp$ is a symmetric monoidal left adjoint, it sends the $D_2$ functor on $\Syn_{\F_p}$ to the $D_2$ functor on $\Sp$.  Furthermore, we have the following comparison between these functors:

\begin{lem} \label{lem:D2-eff}
If $X$ is an $n$-connective spectrum, and $a$ and $b$ are integers, then $D_2(\Sigma^{a,b} \nu X)$ is $(2n+2b)$-effective.  In particular, the map
  \[ \pi_{t-s,t}(D_2(\Sigma^{a,b} \nu X)) \to \pi_{t-s}(D_2(\Sigma^{a}X)) \]
  is an isomorphism for $t \leq 2n+2b$. 
\end{lem}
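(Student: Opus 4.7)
The ``in particular'' clause is immediate from the $(2n+2b)$-effectiveness statement together with the preceding lemma, so the task reduces to establishing the effectiveness bound: since $\tau^{-1}$ is symmetric monoidal, it sends $\Sigma^{a,b}\nu X$ to $\Sigma^{a} X$ and $D_2$ to $D_2$, yielding $\tau^{-1} D_2(\Sigma^{a,b} \nu X) \simeq D_2(\Sigma^{a} X)$, which is precisely the target of the asserted isomorphism.

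To show $D_2(\Sigma^{a,b}\nu X)$ is $(2n+2b)$-effective, I would combine three general properties of the effective $t$-structure on $\Syn_{\F_p}$. First, the subcategory of effective objects is closed under tensor products with additive connectivity bounds: an $m$-effective object tensored with an $\ell$-effective object is $(m+\ell)$-effective. This reduces, by closure of the effective subcategory under colimits, to the generating case $\nu Y \otimes \nu Z \simeq \nu(Y \otimes Z)$, which follows from $\nu$ being symmetric monoidal. Second, the bigraded suspension satisfies $\Sigma^{k,k}\nu X \simeq \nu(\Sigma^{k} X)$, so one may rewrite $\Sigma^{a,b}\nu X \simeq \Sigma^{a-b,\,0}\,\nu(\Sigma^{b} X)$, where the factor $\nu(\Sigma^{b} X)$ is $(n+b)$-effective by definition. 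Third, shifts of the form $\Sigma^{c,0}$ preserve effectiveness at every level, since the effective $t$-structure is controlled only by the weight grading (equivalently, by the $t$-index in the $\pi_{t-s,t}$ notation of the preceding lemma, which is invariant under shifts in the $t-s$ direction).

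Assembling these observations, $\Sigma^{a,b}\nu X$ is $(n+b)$-effective, its tensor square $(\Sigma^{a,b}\nu X)^{\otimes 2}$ is $(2n+2b)$-effective by additivity, and $D_2(\Sigma^{a,b}\nu X) = \bigl((\Sigma^{a,b}\nu X)^{\otimes 2}\bigr)_{h\Sigma_2}$ inherits this effectiveness as a $B\Sigma_2$-colimit of an effective-valued diagram. A pleasant feature of packaging the doubling of connectivity through the tensor-square-plus-colimit formulation (rather than through an identity such as $D_2(\Sigma^{c,0} Y) \simeq \Sigma^{2c, 0} D_2(Y)$) is that one sidesteps the sign twist present in $\Sigma^{1,0}\otimes \Sigma^{1,0}$ at $p=2$, which is irrelevant for effectiveness. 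The main technical point, which the paper presumably handles by appeal to standard facts in Pstr\k{a}gowski's setup, is the precise compatibility of $\nu$ with bigraded suspensions and the identification of the effective $t$-structure in terms of the weight grading; once these are pinned down, the rest of the argument is formal.
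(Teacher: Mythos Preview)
Your proposal is correct and follows essentially the same strategy as the paper: both argue that $D_2(\Sigma^{a,b}\nu X)$ is a $B\Sigma_2$-indexed colimit of $(\Sigma^{a,b}\nu X)^{\otimes 2}$, use that effectiveness is closed under colimits, and compute the effectiveness of the tensor square via the symmetric monoidality of $\nu$ together with the behaviour of bigraded suspensions. The only cosmetic difference is that the paper directly writes $(\Sigma^{a,b}\nu X)^{\otimes 2}\simeq \Sigma^{2a,2b}\nu(X\otimes X)$ and invokes the $(2n)$-connectivity of $X\otimes X$, whereas you first record that $\Sigma^{a,b}\nu X$ is $(n+b)$-effective and then appeal to tensor-additivity of effectiveness; both routes rest on the same fact that $\Sigma^{c,0}$ preserves the effective level, which you make explicit and the paper leaves implicit.
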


\begin{proof}
By definition $D_2(\Sigma^{a,b} \nu X)$ is the colimit, indexed over $BC_2$, of a diagram valued at $(\Sigma^{a,b} \nu X)^{\otimes 2}$.  Since $(2n+2b)$-effectivity is preserved under colimits, it will suffice to check that $(\Sigma^{a,b} \nu X)^{\otimes 2}$ is $(2n+2b)$-effective.  Since $\nu$ is symmetric monoidal, $(\Sigma^{a,b} \nu X)^{\otimes 2} \simeq \Sigma^{2a-2b,0} \nu(\Sigma^{b}X \otimes \Sigma^{b}X)$.  Since $X \otimes X$ is $(2n)$-connective, $\nu(\Sigma^bX \otimes \Sigma^bX)$ is $(2n+2b)$-effective, and the result follows.
\end{proof}

\begin{lem} \label{lem:D2-eff-odd}
Suppose $p \ne 2$.  Then, for any integers $a$ and $b$ and any spectrum $X$, inverting $\tau$ yields an isomorphism
\[\pi_{t-s,t}(D_2(\Sigma^{a,b} \nu X)) \to \pi_{t-s}(D_2(\Sigma^a X))\]
whenever $s \le 2b-2a$
\end{lem}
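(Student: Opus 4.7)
The plan is to exploit the invertibility of $2$ at an odd prime to simplify $D_2$ substantially. The target identity I would aim for is a natural equivalence
\[ D_2(\Sigma^{a,b} \nu X) \simeq \Sigma^{2a-2b,0}\, \nu\bigl( D_2(\Sigma^b X) \bigr) \]
in $\Syn_{\F_p}$ for $p$ odd. Granted this, inverting $\tau$ on the right-hand side produces $\Sigma^{2a-2b} D_2(\Sigma^b X) \simeq D_2(\Sigma^a X)$, using the analogous odd-primary identity $D_2(\Sigma^c Y) \simeq \Sigma^{2c} D_2(Y)$ in spectra. The comparison map of the lemma is then the $\tau$-inversion map on an object of the form $\Sigma^{2a-2b,0} \nu Z$. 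Since multiplication by $\tau$ is an isomorphism on $\pi_{*,*}(\nu Z)$ in the weight range where the canonical comparison $\pi_{*,*}(\nu Z) \to \pi_*(Z)$ is already an isomorphism, the $(2a-2b,0)$-shift in synthetic bigrading translates this precisely into the inequality $s \leq 2b - 2a$.

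To produce the displayed equivalence, I would decompose $\Sigma^{a,b} \simeq \Sigma^{a-b,0} \otimes \Sigma^{b,b}$, apply the symmetric monoidality of $\nu$ to identify $\Sigma^{b,b} \nu X \simeq \nu(\Sigma^b X)$, and then combine two ingredients. First, the identity $D_2(\Sigma^{c,0} Y) \simeq \Sigma^{2c,0} D_2(Y)$ in $\Syn_{\F_p}$, valid at odd primes. Second, the identity $D_2(\nu Z) \simeq \nu(D_2(Z))$ for connective $Z$, which follows from the fact that $\nu$ is symmetric monoidal and, on connective objects, preserves the homotopy orbit colimit that defines $D_2$. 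Stringing these equivalences together yields the desired formula.

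The substantive step is the identity $D_2(\Sigma^{c,0} Y) \simeq \Sigma^{2c,0} D_2(Y)$, which is where the odd-primary hypothesis is genuinely used. The $\Sigma_2$-action on $\Sigma^{c,0} Y \otimes \Sigma^{c,0} Y$ differs from the action on $\Sigma^{2c,0}(Y \otimes Y)$ by the sign character $(-1)^c$ from swapping the two copies of $\Sigma^{c,0}$. At odd primes this twist is harmless: $B\Sigma_2$ is $p$-acyclic, so $\mathbb{Z}_{(p)}[\Sigma_2]$ splits as a product of two copies of $\mathbb{Z}_{(p)}$ via the idempotents $(1 \pm \sigma)/2$, and $D_2$ is computed by projection onto the symmetric summand, on which the sign twist is absorbed. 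This is the only place in the argument where $p \neq 2$ is essential; the remaining manipulations are formal consequences of the monoidality of $\nu$ and the identification of $\nu$-objects with $\tau$-adic completions.
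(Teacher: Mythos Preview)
Your approach is correct and close in spirit to the paper's, though organized differently. The paper uses the odd-primary splitting $D_2 \hookrightarrow (-)^{\otimes 2} \twoheadrightarrow D_2$ at the outset to reduce the statement to one about the tensor square, then computes $(\Sigma^{a,b}\nu X)^{\otimes 2} \simeq \Sigma^{0,2b-2a}\nu\bigl((\Sigma^a X)^{\otimes 2}\bigr)$ directly from the symmetric monoidality of $\nu$ and reads off the result. You instead package the same ingredients into a closed-form equivalence $D_2(\Sigma^{a,b}\nu X) \simeq \Sigma^{2a-2b,0}\,\nu\bigl(D_2(\Sigma^b X)\bigr)$, which is a cleaner structural statement at the cost of one more intermediate identity to justify.

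One point in your write-up should be tightened. The identity $D_2(\nu Z) \simeq \nu(D_2(Z))$ does not follow from a general claim that $\nu$ ``preserves the homotopy orbit colimit'' on connective objects: $\nu$ is not a left adjoint and need not commute with $(-)_{hC_2}$ in general. What makes it true here is precisely the odd-primary hypothesis you already invoked elsewhere: since $2$ is invertible, $D_2$ is the retract of $(-)^{\otimes 2}$ cut out by the idempotent $\tfrac{1+\sigma}{2}$, and $\nu$, being additive and symmetric monoidal, carries this retract of $(\nu Z)^{\otimes 2} \simeq \nu(Z^{\otimes 2})$ to the corresponding retract. Once phrased this way, your argument and the paper's are really the same computation; the paper just applies the retract reduction one step earlier and avoids isolating $D_2(\nu Z) \simeq \nu(D_2 Z)$ as a separate lemma.
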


\begin{proof}
Since $p$ is odd, $D_2(\Sigma^{a,b} \nu X)$ is a summand of $(\Sigma^{a,b} \nu X)^{\otimes 2}$, in a manner compatible with the splitting of $D_2(\Sigma^a X)$ off of $(\Sigma^a X)^{\otimes 2}$.  Thus, it will suffice to check that inverting $\tau$ yields an isomorphism
\[\pi_{t-s,t}((\Sigma^{a,b} \nu X)^{\otimes 2}) \to \pi_{t-s}((\Sigma^a X)^{\otimes 2})\]
whenever $s \leq 2b-2a$

We compute, using the fact that $\nu$ is symmetric monoidal, that 
\[(\Sigma^{a,b} \nu X)^{\otimes 2} \simeq \Sigma^{2a,2b} (\nu X^{\otimes 2}) \simeq \Sigma^{0,2b-2a} \nu((\Sigma^{a} X)^{\otimes 2}).\]

For any spectrum $Y$, inverting $\tau$ gives an isomorphism $\pi_{t-s,t}(\nu Y) \to \pi_{t-s}(Y)$ when $s \le 0$, and the result follows by setting $Y=(\Sigma^{a} X)^{\otimes 2}$.
\todo{do I need to worry about completions here?}
\end{proof}

\todo{make sure I got these right}

%\begin{lem}[\cite{cookware}]\label{lem:D2-eff}
%  Since the invert $\tau$ functor is a symmetric monoidal left adjoint it sends $D_2$ to $D_2$.
%  Moreover, 
%  if $X$ is $n$-connective, then $D_2(\Sigma^{a,b} \nu X)$ is $(2n+a)$-effective in the sense of \cite[?]{cookware}. In particular, this implies that the map
%  \[ \pi_{t,t-s}(D_2(\Sigma^{a,b} \nu X)) \to \pi_{t-s}(D_2(\Sigma^{b}X)) \]
%  is an isomorphism for $t \leq 2n+2a$. 
%\end{lem}

%At $p \neq 2$ this admits a simplification.
%\begin{lem}\label{lem:D2-eff-odd}
%  If $p \neq 2$, then the map
%  \[ \pi_{t,t-s}(D_2(\Sigma^{a,b}\nu X)) \to \pi_{t-s}(D_2(\Sigma^b X)) \]
%  is an isomorphism for $s \leq 2a-2b$.
%\end{lem}

%\begin{proof}
%  Since we are working $p$-locally for a prime $p \neq 2$, there is a split exact sequence
%  $ D_2 \to (-)^{\otimes 2} \to \Lambda^2 $.
%  As discussed in \cite{cookware} there are natural equivalences
%  $ D_2(\Sigma^{a,b} Y) \simeq \Sigma^{2a,0} D_2(\Sigma^{0,b} Y) $.
%  Using these two inputs and the fact that $\nu$ preserves sums and tensors we can make the following computation.
%  \begin{align*}
%    D_2(\Sigma^{a,b}\nu X)
%    &\simeq \Sigma^{2a-2b,0} D_2(\Sigma^{b,b}\nu X)
%    \simeq \Sigma^{2a-2b,0} D_2(\nu \Sigma^b X)
%    \simeq \Sigma^{2a-2b,0} \nu D_2(\Sigma^b X)
%  \end{align*}
%  The lemma now follows since multiplication by $\tau$ is an isomorphism for negative $s$ on objects in the image of $\nu$.
%\end{proof}

%%-----------------------------------------truncation---------------------------------------------------
\subsection{Background on $J$}
Our primary technique for proving that an element of the homotopy groups of spheres lies in $\mathrm{Im}(J)$ is to show that it has sufficiently high $\F_p$-Adams filtration for each prime $p$.
All of our Adams filtration bounds have essentially one source, which is a bound on the $\F_p$-Adams filtration of the stable $J$ map.
However, there is a technical complication.
Specifically, the map $J$ does not necessarily have high Adams filtration as a map from $\sOf$, but it does upon restricting to a finite skeleton of $\sOf$.
For this reason, in \cite[Section 10]{Boundaries} we systematically worked with finite skeleta.
Here, to take what we believe is a cleaner approach, we work directly in categories of truncated objects.

\begin{dfn}
  The \emph{vertical $t$-structure} on $\Syn_{\F_p}$ is the $t$-structure whose collection of connective objects is generated by $\{ \Sigma^{t-s,t}\nu X \}$ with $t-s \geq 0$ as $X$ ranges over $\Sp_{\geq 0}$. Since we may choose a cell structure on $X$ consisting of spheres of non-negative dimension, it suffices to take as generators the bigraded spheres $\Ss^{t-s,t}$ with $t-s \geq 0$.

  We write $\Syn_{\F_p}^{[a,b]}$ for the subcategory of objects concentrated in the range $[a,b]$ in the vertical $t$-structure.
\end{dfn}

The vertical $t$-structure has several pleasant properties, which we summarize in the following proposition.

\begin{prop}
  The following properties of the vertical $t$-structure hold:
  \begin{enumerate}
    \item The vertical $t$-structure is compatible with filtered colimits and tensor products.
    \item Given a synthetic spectrum $X$, the map
      \[\tau_{\geq k} X \to X\]
      is an isomorphism on $\pi_{t-s,t}$ for $t-s \geq k$.  Analogously, $\pi_{t-s, t} (\tau_{< k} X) = 0$ when $t-s \geq k$.
    \item If $\Sp$ is equipped with the usual $t$-structure, then $\nu$ is right \todo{check left v right--seems correct now} $t$-exact and $\tau^{-1}$ is $t$-exact.
    \item If $X$ is $\wt{p}$-complete\footnote{Here, $\wt{p}$ is the unique class such that $\tau \wt{p}=p$.}, then the map
      \[X \to \tau_{< k} X\]
      is an isomorphism on $\pi_{t-s,t}$ when $t-s<k$.  Analogously, $\pi_{t-s,t} (\tau_{\geq k} X) = 0$ when $t-s < k$.
    %\item Since the bigraded homotopy groups of the $\wt{p}$-completion\footnote{Here, $\tilde{p}$ is the unique class such that $\tau \tilde{p}=p$.} of $\Ss^{0,0}$ vanish in negative topological degree ($t-s < 0$),
      %the truncation of a $\wt{p}$-complete synthetic spectum $X$ to live within $\Syn_{\F_p}^{[a,b]}$ does not modify the bigraded homotopy groups $\pi_{t-s,t}X$ with $a \leq t-s \leq b$.
  \end{enumerate}
\end{prop}

\begin{proof}
  Parts (1), (2) and (3) are immediate from the definition.
  Part (4) is a consequence of the fact that $\pi_{t-s,t} (\Ss^{0,0} _{\wt{p}}) = 0$ when $t-s < 0$.
\end{proof}

The most important consequence for us is that this means we can draw conclusions about the Adams filtration on the homotopy of a spectrum $Y$ from considering the truncation of $\nu Y$ in the vertical $t$-structure (as long as we are in the appropriate range).

%% \begin{dfn}
%%   Let $\Sp^{[0,3n-4]}$ be the full subcategory of spectra whose homotopy groups lie between $0$ and $3n-4$.
%%   Let $\Syn_{\F_p}^{[0,3n-4]}$ be the full subcategory of $\F_p$-synthetic spectra whose bigraded homotopy groups $\pi_{a,b}(X)$ are nonzero only for $0 \leq a \leq 3n-4$. In the language of \cite{cookware} these are the objects concentrated between $0$ and $3n-4$ in the vertical $t$-structure.
%% \end{dfn}

%% \begin{rmk}
%%   The truncation of an object $X$ to live within $\Syn_{\F_p}^{[0,3n-4]}$ does not modify the bigraded homotopy groups $\pi_{a,b}X$ with $0 \leq a \leq 3n-4$. This means that we can draw conclusions about the Adams filtration on the homotopy of $X$ from considering the truncation of $\nu X$ (as long as we are in the appropriate range).
%% \end{rmk}

We now establish the following convention, in force throughout the remainder of the paper:

\begin{cnv}
  All objects in $\Syn_{\F_p}$ will be implicitly $\tilde{p}$-completed. Similarly, all spectra will be implicitly $p$-completed, where $p$ is understood from context and tensor products will be taken in these complete categories.

  Moreover, 
  all objects will implicitly be truncated to live in $\Sp^{[0,3n-4]}$ or $\Syn_{\F_p}^{[0,3n-4]}$, and all colimits and limits are taken in these categories.
\end{cnv}

%% In order to ensure that the truncation functors $\Sp^{\geq 0} \to \Sp^{[0,3n-4]}$ and $\Syn_{\F_p}^{\geq 0} \to \Syn_{\F_p}^{[0,3n-4]}$ are symmetric monoidal left adjoints, compatible with inverting $\tau$, we also enforce the following convention.

%% \begin{cnv}
%%   All objects in $\Syn_{\F_p}^{[0,3n-4]}$ will be implicitly $\tilde{p}$-completed, where $\tilde{p}$ is the unique class such that $\tau \tilde{p}=p$.  Similarly, all spectra will be implicitly $p$-completed, where $p$ is understood from context.  All tensor products will be taken in these $p$-complete categories.
%% \end{cnv}

%% \todo{elaborate on why the above fixes the tensor product issues}

\begin{comment}
\begin{lem}
  The truncation functors $\Sp^{\geq 0} \to \Sp^{[0,3n-4]}$ and $\Syn_{\F_p}^{\geq 0} \to \Syn_{\F_p}^{[0,3n-4]}$ are symmetric monoidal left adjoints compatible with inverting $\tau$.
\end{lem}

\begin{proof}
  It suffices to observe that in both of these $t$-structures the unit is connective and a tensor product of connective objects is connective. \todo{This only works as expected if we've $p$-completed... what do you want to do about this?  Must we put in the below convention that we are also implicitly $p$-completing?}
\end{proof}

This lemma we establish the following convention, which will be in force through the remainder of the paper.

\begin{cnv}
All objects are implicitly truncated to live in $\Sp^{[0,3n-4]}$ or $\Syn_{\F_p}^{[0,3n-4]}$, and all colimits and limits are taken in these categories.
\end{cnv}
\end{comment}

\begin{exm}
Using our new conventions, \Cref{exm:goodwillie-cof} simplifies to a cofiber sequence
  \[ D_2(\on) \simeq D_2 (\sOf) \xrightarrow{\hat{m}} \sOf \xrightarrow{\pi} \on. \]
\end{exm}

With this convention in place we can now recall the map $\wt{J}$ produced in \cite[Construction 10.5]{Boundaries}.

\begin{ntn} \label{def-M}
  Let $h(k)$ denote the number of integers $0 < s \leq k$ which are congruent to $0,1,2$ of $4$ mod $8$. We set
  \[M \coloneqq \begin{cases} h(n-1) - \lfloor \log_2 (3n-2) \rfloor + 1 & \text{ if } p = 2 \\ \mathrm{max}\left(\left\lfloor \frac{n}{2p-2} \right\rfloor - \left\lfloor \log_p \left( \frac{3n-2}{2} \right) \right\rfloor,0 \right) & \text{ if } p \neq 2 \end{cases}. \]
   Note that this notation suppresses the dependence of $M$ on $p$ and $n$.
\end{ntn}
\todo{This was stated for 4n-1, need to check it's valid as stated and used here.}

\begin{lem}[\cite{Boundaries}]
  In the truncated category of $\F_p$-synthetic spectra there exists a map
  \[ \wt{J} : \Sigma^{0,M} \nu \sOf \to \Ss \]
  which becomes $J$ upon inverting $\tau$.
\end{lem}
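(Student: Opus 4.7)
The plan is to prove this by assembling known Adams filtration bounds on the map $J$, as established in \cite[Lemmas 10.18 and 10.19]{Boundaries}, and then passing across the $\nu$ adjunction using the detection principle for $\tau$-divisibility of synthetic maps recorded in \cite[Lemma 9.15]{Boundaries}. The main issue that must be handled carefully is that $J$ does not have Adams filtration $M$ as a map out of the infinite suspension spectrum $R = \Sigma^{\infty} \mathrm{O}\langle n-1\rangle$; it only acquires high filtration after restriction to a sufficiently small skeleton. The truncation conventions established above are designed precisely to allow us to avoid keeping track of skeleta by hand.

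First I would recall from \cite[Lemmas 10.18, 10.19]{Boundaries} that the composite
\[ \Skel_{3n-2}(R) \to R \xrightarrow{J} \Ss \]
has $\F_p$-Adams filtration at least $M$, where $M$ is as in \Cref{def-M}. At $p=2$, this filtration count reflects the fact that the classical $J$ homomorphism hits the image of $J$ classes, which have known Adams filtrations built out of the Hopf map $\eta$, $\nu$, and $\sigma$ (one filtration per cell of $\mathrm{O}\langle n-1\rangle$ in degree $\equiv 0,1,2 \pmod 8$, up to the logarithmic correction for divisibility); at odd primes, it comes from the $v_1$-periodic pattern in $\pi_{\ast} \mathrm{bo}_{(p)}$. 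The $\lfloor \log_p(3n) \rfloor$ term accounts for the cells of highest degree whose contribution to Adams filtration is smallest.

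Next I would apply \cite[Lemma 9.15]{Boundaries} to lift this filtration bound to a factorization inside $\Syn_{\F_p}$. That lemma asserts that a map of spectra of $\F_p$-Adams filtration at least $M$ becomes divisible by $\tau^M$ after applying $\nu$, so that we obtain a synthetic map
\[ \widetilde{J}' : \Sigma^{0,M}\, \nu\bigl(\Skel_{3n-2}(R)\bigr) \to \nu \Ss \]
with $\tau^{-1} \widetilde{J}' = J \circ \Skel_{3n-2}$.

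Finally I would invoke the truncation convention: the inclusion of the skeleton $\Skel_{3n-2}(R) \hookrightarrow R$ is an equivalence after applying $\tau_{\le 3n-2}$, and $\nu$ is a right adjoint that commutes with the relevant truncations in bigraded degrees $\le 3n-2$. Thus, once we pass to the truncated category $\Syn_{\F_p}^{[0,3n-2]}$, the synthetic spectra $\nu(\Skel_{3n-2}(R))$ and $\nu R$ become canonically identified, and the map $\widetilde{J}'$ becomes the desired
\[ \widetilde{J} : \Sigma^{0,M} \nu R \to \Ss \]
whose image under $\tau^{-1}$ is $J$. The main obstacle is really just bookkeeping: one must verify that the implicit $p$-completions and truncations interact correctly with $\nu$ and the symmetric monoidal structure so that the skeleton-to-total-space identification happens entirely inside the truncated symmetric monoidal category; this is exactly what the conventions established just above are designed to guarantee.
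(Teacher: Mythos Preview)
Your proposal is correct and follows essentially the same route as the paper: restrict $J$ to a finite skeleton to get Adams filtration $\ge M$ via \cite[Lemmas 10.18, 10.19]{Boundaries}, lift to a $\tau^M$-divisible synthetic map via \cite[Lemma 9.15]{Boundaries}, and then use the truncation convention to erase the difference between $\nu$ of the skeleton and $\nu R$. The only cosmetic discrepancies are that the paper uses the $(3n-1)$-skeleton rather than the $(3n-2)$-skeleton, and that $\nu$ is not literally a right adjoint (the identification in the truncated category follows instead because the skeleton inclusion is an equivalence in the relevant range of bigraded homotopy); neither affects the argument.
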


\begin{proof}
  \todo{I think it's OK, but would appreciate if Andy also looked at the odd primary claim here.}
	\todo{Just to be sure I understand, we are leaking a bit of Adams filtration here.  We would get slightly better bounds if we used a skeleton that depended on $d$ instead of a $(3n-3)$-skeleton, but it's a logarithmic difference.}
  Using \cite[Lemmas 10.18 and 10.19]{Boundaries}, after restricting to the $(3n-3)$-skeleton on the source the map $J$ has $\F_p$-Adams filtration at least $M$\footnote{At odd primes we have implicitly used that $2p-2$ is divisible by $4$ here.  Also, we must look at the proof of 10.19 and not just the statement.}.
	Using \cite[Lemma 9.15]{Boundaries}, after applying $\nu$ to the restricted map it becomes divisible by $\tau^M$. Now, applying our convention that everything is truncated, the difference between $\nu$ of a skeleton of $\sOf$ and $\nu \sOf$ disappears and we just obtain a map
  \[ \wt{J} : \Sigma^{0,M} \nu \sOf \to \Ss. \qedhere\]
\end{proof}

%% As beThe map $\pi$ can easily be identified with the mate of the identity map on $\Omega^\infty b$.
%% The identification of the first map with $\hat{m}$ comes from \Cref{lem:goodwillie}.

%% \begin{lem}\label{lem:split}
%%   The cofiber sequence
%%   \[ D_2(R) \xrightarrow{\hat{m}} R \xrightarrow{\pi} b \]
%%   is split exact on homotopy groups.
%% \end{lem}

%% \begin{proof}
%%   It will suffice to show that the right map admits a section after applying $\Omega^\infty$.
%%   The map $\pi$ is the mate of the identity on $\Omega^\infty b$,
%%   therefore after applying $\Omega^\infty$ the unit of the adjunction provides a section.
%% \end{proof}

\section{Bounding Adams filtrations with synthetic lifts} \label{sec:homotopy-argument}
In this section we will show that elements in the kernel of the unit of $\MOn$ have relatively high $\F_p$-Adams filtrations for each prime $p$.

\begin{ntn}
Given an integer $0 \le d \le n-4$, we let $m$ denote $2n+d$.
Recall also our standing definition of $M$ from \Cref{def-M}.
\end{ntn}

Specifically, we will prove the following four propositions.

\begin{prop}\label{prop:d1}
  If $0 \leq d \leq n-4$ and $x \in \pi_{m}\sOf$,
  then either $J(x) \in \mathrm{Im}(J)$ or $J(x)$ is detected in $\F_2$-Adams filtration at least $2M - d-2$.
%  Furthermore, every element in the image of $J$ is in the image of $J$.
\end{prop}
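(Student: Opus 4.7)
The plan is to decompose any $x \in \pi_m R$ via the split cofiber sequence $D_2(B) \xrightarrow{\hat{m}} R \xrightarrow{\pi} B$ from \Cref{exm:goodwillie-cof}, which is split on homotopy groups in our range by \Cref{lem:split}. Writing $x = \hat{m}(y) + s(z)$ with $y \in \pi_m D_2(B) \cong \pi_m D_2(R)$ and $z \in \pi_m B$, I handle the two summands by different methods.

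For the summand $J(s(z))$: the section $s$ is obtained by stabilizing the $(\Sigma^\infty,\Omega^\infty)$-unit of $\Omega^\infty B = \mathrm{O}\langle n-1\rangle$, so $J \circ s$ is adjoint to the unstable $J$ homomorphism $\mathrm{O}\langle n-1\rangle \to \mathrm{GL}_1(\Ss)$. Thus $J(s(z))$ lies in the classical $\mathrm{Im}(J)$, and no filtration bound is required for this piece.

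For the summand $J(\hat{m}(y))$: since $J$ is a map of non-unital $\mathbb{E}_\infty$-algebras, the prism of \Cref{sec:prism} yields a homotopy $J \circ \hat{m} \simeq \hat{m} \circ D_2(J)$. This is the key step, because it re-expresses the composite in a form that uses $J$ only as a bare map of spectra, so it can be lifted to synthetic spectra via the synthetic refinement $\wt{J}: \Sigma^{0,M}\nu R \to \Ss$ (which is not known to carry any multiplicative structure). By \Cref{lem:D2-eff} applied with $X = R$ an $(n-1)$-connective spectrum, $D_2(\Sigma^{0,M}\nu R)$ is $(2n+2M-2)$-effective, so a class $y \in \pi_m D_2(R)$ with $m=2n+d$ lifts uniquely to $\wt{y} \in \pi_{m,\,2n+2M-2}(D_2(\Sigma^{0,M}\nu R))$. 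The synthetic composite
\[ \Ss^{m,\,2n+2M-2} \xrightarrow{\wt{y}} D_2(\Sigma^{0,M}\nu R) \xrightarrow{D_2(\wt{J})} D_2(\Ss) \xrightarrow{\hat{m}} \Ss \]
inverts $\tau$ to $J(\hat{m}(y))$, and \cite[Corollary 9.21]{Boundaries} converts the weight excess $(2n+2M-2)-m = 2M-d-2$ into the claimed $\F_2$-Adams filtration bound.

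The main difficulty is bookkeeping: verifying that the synthetic diagram does indeed invert $\tau$ to the spectrum-level one, and that the splitting in \Cref{lem:split} remains exact inside the truncated category. Both should follow routinely from the symmetric monoidal compatibility of truncation with $\tau^{-1}$, together with the $\Omega^\infty$-origin of the splitting of $\pi$. Conceptually, the essential point is that the non-unital $\mathbb{E}_\infty$-structure of $J$ is used \emph{only} at the spectrum level to produce the factorization $J\hat{m} \simeq \hat{m} D_2(J)$, so that the otherwise multiplicative-looking argument can be carried out on the synthetic side with only the plain morphism $\wt{J}$ available.
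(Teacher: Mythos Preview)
Your proposal is correct and follows essentially the same approach as the paper: split $\pi_m R$ via \Cref{lem:split}, identify the $B$-summand with the classical unstable $J$, and for the $D_2(B)$-summand use the $\einu$-structure to factor $J\hat{m}\simeq\hat{m}\,D_2(J)$ and then lift $D_2(J)$ to $D_2(\wt{J})$ synthetically via \Cref{lem:D2-eff}. Your bookkeeping is also slightly cleaner than the paper's, which writes $D_2(\Sigma^{0,2M}\nu R)$ in the displayed sequence where $D_2(\Sigma^{0,M}\nu R)$ is meant (the source of $D_2(\wt{J})$); your version makes the effectivity bound $2(n-1)+2M=2n+2M-2$ transparent and gives the claimed weight excess $2M-d-2$ directly.
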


\begin{prop}\label{prop:d1-odd}
  For $p \neq 2$, 
  if $0 \leq d \leq n-4$ and $x \in \pi_{m}\sOf$,
  then either $J(x) \in \mathrm{Im}(J)$ or $J(x)$ is detected in $\F_p$-Adams filtration at least $2M$.
%  Furthermore, every element in the image of $J$ is in the image of $J$.
\end{prop}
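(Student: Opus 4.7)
The plan is to run the argument for \Cref{prop:d1} but at odd primes, where the synthetic $D_2$ functor is better-behaved. First, I would use the homotopy splitting from \Cref{lem:split} to write $\pi_m R \cong \pi_m B \oplus \pi_m D_2(B)$ and handle the two summands separately.

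Classes coming from $\pi_m B$ land in $\mathrm{Im}(J)$: the composite $B \to R \xrightarrow{J} \Ss$, where the first map is the section of $\pi$ supplied by \Cref{lem:split}, corresponds under the $(\Sigma^\infty, \Omega^\infty)$-adjunction to the classical unstable $J$ homomorphism $\mathrm{O}\langle n-1\rangle \to \mathrm{GL}_1(\Ss)$.

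For classes coming from $\pi_m D_2(B)$, I would write such a class as $\hat{m}(y)$ for some $y \in \pi_m D_2(R)$ and apply naturality of $\hat{m}$ for the non-unital $\mathbb{E}_\infty$-algebra map $J$ (see \Cref{sec:prism}) to rewrite $J(\hat{m}(y)) = \hat{m}(D_2(J)(y))$. Then I would lift this composite to $\F_p$-synthetic spectra. By \Cref{lem:D2-eff-odd} applied with $a = 0$ and $b = M$, the invert-$\tau$ map $\pi_{m, m+2M}(D_2(\Sigma^{0,M}\nu R)) \to \pi_m D_2(R)$ is an isomorphism, since the bound $s \le 2b - 2a$ is saturated at $s = 2M$. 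Letting $\wt{y}$ denote the resulting synthetic lift of $y$, the composite
\[ \Ss^{m, m+2M} \xrightarrow{\wt{y}} D_2(\Sigma^{0,M}\nu R) \xrightarrow{D_2(\wt{J})} D_2(\nu\Ss) \xrightarrow{\hat{m}} \nu\Ss \]
recovers $J(\hat{m}(y))$ after inverting $\tau$, and \cite[Corollary 9.21]{Boundaries} translates the existence of a representative in bidegree $(m, m+2M)$ into an $\F_p$-Adams filtration bound of at least $2M$.

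The main subtlety is the existence of the synthetic lift $\wt{y}$, which rests on the fact that at odd primes $D_2$ is a direct summand of $(-)^{\otimes 2}$. This splitting, captured by \Cref{lem:D2-eff-odd}, is precisely what makes the odd primary case cleaner than its $p=2$ counterpart: in the even case the weaker effectivity statement in \Cref{lem:D2-eff} forces the range of valid lifts to shrink with the connectivity of $R$, producing the additional $d+2$ loss in the Adams filtration bound appearing in \Cref{prop:d1}.
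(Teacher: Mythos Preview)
Your proposal is correct and follows essentially the same approach as the paper's own proof: both split $\pi_m R$ via \Cref{lem:split}, identify the $B$-summand with the classical unstable $J$ homomorphism, and for the $D_2(B)$-summand use the naturality square for $\hat m$ together with the synthetic lift $\wt J$ and \Cref{lem:D2-eff-odd} (with $a=0$, $b=M$, so $s=2M$ saturates the bound) to obtain a class in bidegree $(m,m+2M)$, then invoke \cite[Corollary 9.21]{Boundaries}. Your closing remark pinpointing why the odd-primary case avoids the $d+2$ loss is exactly the distinction the paper encodes by setting $k=0$ for $p\neq 2$ versus $k=d+2$ for $p=2$.
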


\begin{prop}\label{prop:d2}  
  Given a class $x \in \pi_{m}\left(\sOf^{\otimes 2}\right)$ where $0 \leq d \leq 2(M - 5)$ \todo{To get botany problem results, we want the case $d=-1$ here, rather than $d=0$.}
  and such that $x (m - 1 \otimes J) = 0$,
  the associated class $d_2(x) \in \pi_{m+1}(\Ss)$ is detected in $\F_2$-Adams filtration at least $2M - d - 3$.
\end{prop}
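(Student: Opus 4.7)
The plan is to lift the Toda bracket defining $d_2(x)$ to the category of $\F_2$-synthetic spectra along $\widetilde{J}$, and then translate the resulting bigraded homotopy class into an Adams filtration bound. The obstruction to doing this naively is that $\widetilde{J}$ is known only as a map of spectra, not as a map of non-unital $\mathbb{E}_\infty$-algebras, so the ``can'' homotopy defining $d_2(x) = \langle x, m - 1 \otimes J, J\rangle$ does not admit an obvious synthetic refinement. My first step is to invoke the prism of \Cref{sec:prism} to rewrite ``can'' as a composite of the natural transformations $c$, $\hat{m}$, and the monoidal structure maps, none of which depend on $J$ being a ring map.

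Once this rewriting is in place, the Toda bracket is presented by a diagram whose synthetic lift requires two further nullhomotopies: $(\nu x) c \simeq 0$ in the appropriate bidegree, and $\tau (\nu x)(1 \otimes \widetilde{J}) \simeq 0$. The topological analogue of each follows from the hypothesis $x(m - 1\otimes J) = 0$ combined with $m \simeq c \hat{m}$, since by \Cref{lem:split} and \Cref{exm:goodwillie-cof} the map $\hat{m}$ is split monic on homotopy in our truncated range, reducing $xc = 0$ to $x(1 \otimes J) = 0$. The latter in turn follows from a diagram chase through the cofiber sequence $D_2(B) \to R \to B$. For the lift from (a) $(\nu x) c \simeq 0$ to its bigraded counterpart, \Cref{lem:D2-eff} already identifies $\pi_{m+2M-d-2,m} D_2(\Sigma^{0,M}\nu R)$ with $\pi_m D_2(R)$, so the step is essentially automatic; the substantive content lies in the synthetic nullhomotopy of $\tau (\nu x)(1 \otimes \widetilde{J})$.

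To produce that nullhomotopy I follow the template of \cite[\S 10]{Boundaries}. I lift the cofiber sequence $D_2(B) \to R \to B$ to $\F_2$-synthetic spectra, observe that $\pi_{*,*}\nu B$ is $\tau$-torsion free so that the composite $(\nu x)(1 \otimes \widetilde{J})(\nu\pi)$ vanishes, and conclude that $(\nu x)(1 \otimes \widetilde{J})$ lifts to the fiber of $\nu R \to \nu B$. Comparing this fiber to $\nu D_2(B)$, one finds that their mismatch is a $C\tau$-module, so $\tau \cdot (\nu x)(1 \otimes \widetilde{J})$ lifts further to a class in $\pi_{*,*}\nu D_2(B)$. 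The crucial input, and what I expect to be the main obstacle, is a lemma asserting that $\pi_{t-s,t}\nu D_2(B)$ is $2$-torsion free when $s \geq \tfrac{1}{2}(t-s) - n + 4$; this should follow from a computation of the integral homology of $D_2(B)$, which decomposes as a direct sum of copies of $\Z$ and $\F_2$ because $B$ itself decomposes as a sum of Eilenberg--MacLane spectra through our range. The condition $d \leq 2(M-6)$ in the proposition is calibrated precisely so that the lifted class sits inside this torsion-free region; combined with the fact that $\widetilde{J}$ becomes torsion on every finite skeleton, this forces the lift to be zero and delivers the required synthetic nullhomotopy.

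Finally, assembling the synthetic lifts of the homotopies into a synthetic Toda bracket produces a class in $\pi_{m+2M-d-2,\,m+1}(\Ss^{0,0})$ that recovers $d_2(x)$ upon inverting $\tau$. The standard correspondence between bigraded synthetic homotopy and $\F_2$-Adams filtration from \cite[Corollary 9.21]{Boundaries} then yields the required lower bound of $2M - d - 2$ on the $\F_2$-Adams filtration of $d_2(x)$.
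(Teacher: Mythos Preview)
Your proposal is correct and follows essentially the same route as the paper's proof: rewrite the ``can'' homotopy via the prism of \Cref{sec:prism}, establish the nullhomotopies $xc\simeq 0$ and $\tau(\nu x)(1\otimes\widetilde{J})\simeq 0$ (the latter via the cofiber sequence $D_2(B)\to R\to B$, the $C\tau$-module discrepancy between $\fib(\nu\pi)$ and $\nu D_2(B)$, and the $2$-torsion-freeness of $\pi_{*,*}\nu D_2(B)$ in the relevant region), lift the entire diagram to $\Syn_{\F_2}$, and read off the filtration bound from the bigrading. Two cosmetic points: your bigraded homotopy groups appear with the two indices transposed relative to the paper's $(t-s,t)$ convention (e.g.\ the final class should lie in $\pi_{m+1,\,m+2M-d-2}(\Ss^{0,0})$), and the torsion-free region in \Cref{lem:D2-homotopy} is stated with the constant $5$ rather than $4$; neither affects the argument.
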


\begin{prop}\label{prop:d2-odd}
  For $p \neq 2$,
  given a class $x \in \pi_{m}\left(\sOf^{\otimes 2}\right)$ where $0 \leq d \leq n -4$
  and such that $x (m - 1 \otimes J) = 0$,
  the associated class $d_2(x) \in \pi_{m+1}(\Ss)$ is detected in $\F_p$-Adams filtration at least $2M - 2$.  
\end{prop}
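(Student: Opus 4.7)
The plan is to follow exactly the synthetic-lifting strategy used for Proposition \ref{prop:d2} at $p=2$, exploiting the sharper effectivity statement \Cref{lem:D2-eff-odd} to remove the $-d$ loss in the filtration bound. First, I identify $d_2(x)$ with the Toda bracket $\langle x, m-1\otimes J, J\rangle$ whose right-hand nullhomotopy is the canonical "can" supplied by the non-unital $\mathbb{E}_\infty$-structure on $J$. Using the prism decomposition of \Cref{sec:prism}, "can" factors through $c : R^{\otimes 2} \to D_2(R)$ followed by $\hat m$ and $D_2(J)$; this removes the use of the $\mathbb{E}_\infty$-structure on $J$ and replaces it with the single natural transformation $\hat m$, for which we have the Goodwillie description from \Cref{exm:goodwillie-cof}.

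Second, I upgrade the hypothesis $x(m-1\otimes J)=0$ to nullhomotopies $xc=0$ and $x(1\otimes J)=0$ separately. By \Cref{lem:split}, the map $\hat m$ is a split monomorphism on homotopy through our truncation range, so the relation $xc\hat m = x(1\otimes J)$ coming from the prism, combined with the assumed vanishing $x(c\hat m - 1\otimes J)=0$, forces both $xc\hat m$ and $x(1\otimes J)$ to vanish, and then injectivity of $\hat m$ yields $xc=0$. I then lift these to synthetic nullhomotopies. Applying \Cref{lem:D2-eff-odd} to $X = R$, with $a=0$ and $b=M$, shows that
\[ \pi_{t-s,t}\bigl( D_2(\Sigma^{0,M}\nu R) \bigr) \xrightarrow{\;\tau^{-1}\;} \pi_{t-s}\bigl( D_2(R) \bigr) \]
is an isomorphism for $s\le 2M$, so $(\nu x)c$ lifts a genuine nullhomotopy of $xc$ without requiring any $\tau$-division. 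For the second nullhomotopy, I run the odd-primary analog of the argument in the commented material: the hypothesis $x(1\otimes J)=0$ and the Goodwillie cofiber sequence $D_2(B) \to R \to B$ show that $(\nu x)(1\otimes \wt J)$ lifts, after multiplication by $\tau$, to a class in $\pi_{\ast,\ast}\nu D_2(B)$. At odd primes $D_2$ is a summand of the tensor square, so the relevant bigraded homotopy of $\nu D_2(B)$ is $\tau$-torsion free (and $p$-torsion free in positive filtration); combined with the fact that $J$ becomes $p$-power torsion on any finite skeleton, this forces the lifted class to vanish, giving $\tau\,(\nu x)(1\otimes \wt J)=0$.

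Third, using $\wt J$, the two synthetic nullhomotopies above, and the synthetic natural transformation $c$, I assemble the odd-primary analog of the synthetic diagram from the $p=2$ case. Inverting $\tau$ reproduces the diagram computing $d_2(x)$, so the resulting bigraded Toda bracket provides a synthetic class in the appropriate bigrading that maps to $d_2(x)$. The filtration contributions are $M$ from each copy of $\wt J$ and $-1$ from the single power of $\tau$ absorbed in the nullhomotopy of $(\nu x)(1\otimes \wt J)$, combined with the factor of $-1$ already present from passing through the bracket length; thus the synthetic lift of $d_2(x)$ lies in bigrading $2M-2$ above its topological degree. Translating via \cite[Corollary 9.21]{Boundaries} yields the required $\F_p$-Adams filtration bound of $2M-2$.

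The main obstacle is the cofiber-sequence argument producing the nullhomotopy $\tau\,(\nu x)(1\otimes \wt J)=0$: this requires the odd-primary computation of the bigraded homotopy of $\nu D_2(B)$, which reduces at $p$ odd to the (known) $\mathrm{E}_2$-degeneration and $v_0$-torsion freeness of the $\F_p$-Adams spectral sequence for $\mathrm{bo}\otimes\mathrm{bo}$, together with the observation that restricting $J$ to any finite skeleton kills its contribution after $\tau$-inversion. Once this input is in place, the rest of the argument is formally identical to the $p=2$ case, with $d$ replaced by $0$ throughout thanks to \Cref{lem:D2-eff-odd}.
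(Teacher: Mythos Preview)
Your overall strategy is the same as the paper's, and you have correctly identified all the key inputs: the prism decomposition of ``can'', the synthetic lift $\wt J$, the effectivity statement \Cref{lem:D2-eff-odd}, the cofiber sequence $D_2(B)\to R\to B$, the odd-primary torsion-freeness of $\pi_{*,*}\nu D_2(B)$, and the fact that $J$ is torsion on finite skeleta.  The final bookkeeping ($2M$ from two copies of $\wt J$, $-1$ from a single $\tau$, $-1$ from the bracket) is also correct.

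There is, however, a logical misstep in your second paragraph.  You write that the relation $xc\hat m = x(1\otimes J)$, ``combined with the assumed vanishing $x(c\hat m - 1\otimes J)=0$, forces both $xc\hat m$ and $x(1\otimes J)$ to vanish.''  But those two statements are the same statement: the hypothesis $x(m-1\otimes J)=0$ together with $m\simeq c\hat m$ \emph{is} the equality $xc\hat m = x(1\otimes J)$, and split injectivity of $\hat m$ only tells you that $xc=0$ is \emph{equivalent} to $x(1\otimes J)=0$, not that either holds.  You then feed the (not yet established) vanishing $x(1\otimes J)=0$ into your proof of the synthetic nullhomotopy, which makes the argument circular as written.

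The paper untangles this by reversing the order: one proves the synthetic statement $\tau\,(\nu x)(1\otimes\wt J)=0$ \emph{first}, using only the weaker fact $x(1\otimes J)\circ\pi=0$ (which does follow from the hypothesis, since $\hat m\pi=0$).  Concretely, $y\coloneqq(\nu x)(1\otimes\wt J)$ has $y\,\nu\pi=0$ because $\pi_{*,*}\nu B$ is $\tau$-torsion free, so $y$ lifts to the fiber $F$ of $\nu R\to\nu B$; then one uses that the cofiber $E$ of $\nu D_2(B)\to F$ is a $C\tau$-module (this is \cite[Lemma~11.15]{Boundaries}, which you do not mention) to lift $\tau y$ to $\nu D_2(B)$, and finally the torsion-freeness of $\pi_{*,*}\nu D_2(B)$ together with the torsion property of $J$ forces the lift to vanish.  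Inverting $\tau$ then gives $x(1\otimes J)=0$, hence $xc=0$, hence (by \Cref{lem:D2-eff-odd}) the synthetic $c$-nullhomotopy.  With this reordering your argument goes through and coincides with the paper's.
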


%We will prove the following odd primary analogs simultaneously since the proof only differ in the inputs they use.

\newcommand{\ud}{\underline{d}}

%\begin{ntn}
%To streamline our presentation we will let
%$ \ud \coloneqq \begin{cases} d & \text{ if } p = 2 \\ 0 & \text{ if } p \neq 2 \end{cases}. $
%\end{ntn}

\begin{proof}[Proof of \Cref{prop:d1} and \Cref{prop:d1-odd}.]
	Using the splitting of $\pi_m(\sOf)$ given by \Cref{lem:split}, it will suffice to compute $J(x)$ separately for $x \in \pi_{m}(\on)$ and $x \in \pi_{m}(D_2(\on))$.
%  Using the splitting from \Cref{lem:split} we can reduce computing $J(x)$ to two cases, the elements from $B$ and the elements from $D_2(B)$.

  We begin by handling the elements from $\on$.
%  Since we are making a calculation on homotopy groups it suffices to work with $\Omega^\infty J$.
%  The process of applying $\Omega^\infty$ and precomposing with the unit of the adjunction simply passes to the mate of this map, which is
  The composite of the splitting $\pi_{m} \on \to \pi_{m} \sOf$ with $\pi_{m}J$ is given by applying $\pi_m$ to the sequence of maps of spaces
	\[\mathrm{O}\langle n-1 \rangle \longrightarrow \Omega^{\infty} \Sigma^{\infty} \mathrm{O}\langle n-1 \rangle \stackrel{\Omega^{\infty} J}{\longrightarrow} \Omega^{\infty} \Ss,\]
	where the first map is the unit of the $(\Sigma^{\infty},\Omega^{\infty})$-adjunction.  As such, the composite is just the classical unstable $J$ homomorphism
	\[ \mathrm{O}\langle n-1 \rangle \to \mathrm{O} \to \Omega^{\infty}\Ss. \]
%  \[ J : \mathrm{O}\langle n-1 \rangle \to \mathrm{GL}_1(\Ss). \]
  %This identification proves \Cref{thm:mon-unit}(1).

Now we handle the elements from $D_2(\on)$.
%% We will show that any element in the image of the composite
%% has sufficiently high Adams filtration that it is automatically in the image of $J$.
%% Although we will bound the Adams filtration in the subsection, deducing that this implies the element lies in the image of $J$ is deferred to the final subsection of this section. The reason for this division is that we will use this technique several times and it felt more natural to group these proof together. 
This means understanding what the composite
$ D_2(\sOf) \xrightarrow{\hat{m}} \sOf \xrightarrow{J} \Ss $
does on homotopy groups.
Since $J$ is a map of non-unital $\mathbb{E}_\infty$-algebras we have a commuting square
  \begin{center}
    \begin{tikzcd}
      D_2(\sOf) \ar[d, "D_2(J)"] \ar[r, "\hat{m}"] & \sOf \ar[d, "J"] \\
      D_2(\Ss) \ar[r, "\hat{m}"] & \Ss.
    \end{tikzcd}
  \end{center}
  We will bound the $\F_p$-Adams filtration of the composite $x D_2(J) \hat{m}$.
	Let $k=d+2$ if $p=2$ and $k=0$ if $p \ne 2$.
  Using the map $\wt{J}$
  % and \Cref{lem:D2-eff} (or \Cref{lem:D2-eff-odd} at for $p$ odd)
  we can construct the following sequence,
  \[ \Ss^{m,m+2M-k} \xrightarrow{\wt{x}} D_2(\Sigma^{0,2M}\nu \sOf) \xrightarrow{D_2(\wt{J})} D_2(\nu \Ss) \xrightarrow{\hat{m}} \nu \Ss, \]
  where $\wt{x}$ is a lift of $x$ along the isomorphism from \Cref{lem:D2-eff} (or \Cref{lem:D2-eff-odd} if $p$ is odd).
  Applying \cite[Corollary 9.21]{Boundaries} to this diagram finishes the proof.
\end{proof}

\begin{proof}[Proof (of \Cref{prop:d2} and \Cref{prop:d2-odd}).]
  %% The condition that $d_1(x) = 0$ is equivalent to saying that $x( m - 1 \otimes J) = 0$.
  In \cite[Theorem 5.2]{Boundaries}, which was recalled in \Cref{sec:mon-unit}, we identified $d_2(x)$ with $\langle x, m - 1 \otimes J, J \rangle$, where the null-homotopy on the right is the homotopy ``can'' (also discussed in \Cref{sec:mon-unit}). We will accomplish our goal by first manipulating this Toda bracket expression into a form that does not rely on the fact that $J$ is a ring map, and then lifting it to the synthetic category using $\wt{J}$. The reason we need to remove the dependence on the ring structure is that the synthetic map $\wt{J}$ is not obviously any kind of ring map. 
In order to streamline our presentation we will defer the verification of several key inputs to a sequence of lemmas after the main body of the proof.
The first of these is the following:
\begin{align}
  \text{Both } xm \text{ and } x(1 \otimes J) \text{ are nullhomotopic.}
\end{align}
Using this, we can expand $\langle x, m - 1 \otimes J, J \rangle$ into the matric form below.

\begin{center}
  \begin{tikzcd}[sep=huge]
    \Ss^{m} \ar[r, "x"] \ar[rr, bend left=40, "0"{name=A}] \ar[dr, bend right, "0"{name=B}] & \sOf^{\otimes 2} \ar[Leftrightarrow, from=A, "(1)"] \ar[Leftrightarrow, from=B, "(1)" below] \ar[r, "1 \otimes J"] \ar[d, "m"] & \sOf \ar[d, "J"] \\
    & \sOf \ar[r, "J"] \ar[ur, Leftrightarrow, "\mathrm{can}"] & \Ss
  \end{tikzcd}
\end{center}

Specializing the triangular prism produced in \Cref{sec:prism} to the map $J$ will allow us to remove dependence on ring structures from the above diagram.
Specifically, in the language of that prism we will prove the following fact, which is strictly stronger than (1):
\begin{align}
  \text{Both } xc \text{ and } x(1 \otimes J) \text{ are nullhomotopic.}
\end{align}
Assuming $(2)$, the diagram above can now be refined to the following.
Note that the size of the indeterminacy does not increase.

\begin{center}
  \begin{tikzcd}[sep=huge]
    \Ss^{m} \ar[dr, "x"] \ar[rr, "0"{name=D}] \ar[ddr, bend right, "0"{name=C} left] & & \sOf \otimes \Ss \ar[dr, "J \otimes 1"] \\
    & \sOf^{\otimes 2}  \ar[Leftrightarrow, from=C, "(2)"] \ar[Leftrightarrow, from=D, "(2)"] \ar[d, "c"] \ar[ur, "1 \otimes J"] \ar[rr, "J \otimes J"{name=E}] \ar[phantom, from=1-3, to=E, "(\otimes)"] & & \Ss^{\otimes 2} \ar[dl, "c"] \ar[dd, "\simeq"] \\
    & D_2(\sOf) \ar[urr, phantom, "(c)"] \ar[r, "D_2(J)"] & D_2(\Ss) \ar[dr, "\hat{m}"] \\
    & & & \Ss     
  \end{tikzcd}
\end{center}

%% As above, the element of $\pi_{m+1}\Ss$ associated to this diagram is $d_2(x)$ up to indeterminacy coming from the choice (2). Since this indeterminacy lives in the image of $d_1$ we can safely ignore it.
Now we lift this diagram to the synthetic category. Let $k=d+2$ if $p=2$ and $k=1$ if $p \ne 2$.  The key point will be to prove the existence of synthetic lifts of the nullhomotopies from (2):
\begin{align}
  \text{Both } \tau^{k} (\Sigma^{0,2M} \nu x) c \text{ and } \tau^k (\Sigma^{0,2M} \nu x) (1 \otimes \wt{J}) \text{ are nullhomotopic.}
\end{align}
Assume that (3) holds.
Since the diagram above has no dependence on the ring structure on $J$, we can use $\wt{J}$, the nullhomotopies from (3) and the natural transformation $c$ in the category of synthetic spectra to produce the following diagram.
\begin{center}
  \begin{tikzcd}
    \Ss^{m,m+2M-k} \ar[dr, "\tau^k \Sigma^{0,2M} \nu x"] \ar[rr, "0"{name=G}] \ar[ddr, bend right, "0"{name=F}] & & \Sigma^{0,M} \nu \sOf \ar[dr, "\wt{J} \otimes 1"] \\
    & (\Sigma^{0,M} \nu \sOf)^{\otimes 2} \ar[Leftrightarrow, from=G, "(3)"] \ar[Leftrightarrow, from=F, "(3)"] \ar[ur, "1 \otimes \wt{J}"] \ar[rr, "\wt{J} \otimes \wt{J}"{name=J}] \ar[phantom, from=1-3, to=J, "(\otimes)"] \ar[d, "c"] & & (\Ss^{0,0})^{\otimes 2} \ar[dl, "c"] \ar[dd, "\simeq"] \\
    & D_2(\Sigma^{0,M} \nu \sOf) \ar[rru, phantom, "(c)"] \ar[r, "D_2(\wt{J})"] & D_2(\Ss^{0,0}) \ar[dr, "\hat{m}"] & \\
    & & & \Ss^{0,0}
  \end{tikzcd}
\end{center}

The compatibility of the natural transformation $c$ with the symmetric monoidal functor that inverts $\tau$ implies that the element of $\pi_{m+1,m+2M-k}(\Ss^{0,0})$ associated to this diagram maps to $d_2(x)$ upon inverting $\tau$.
Using \cite[Corollary 9.21]{Boundaries} to relate bigrading to Adams filtration completes the proof.
\end{proof}
%\todo{This means that we have to be careful about the case d=1}

In the remainder of this section, we prove the existence of the nullhomotopies (1), (2) and (3).  Their existence is immediate from the following lemma.

\begin{lem}\label{lem:nulls}
  Let $k=d+2$ if $p=2$ and $k=1$ if $p \ne 2$.
  In the situation of \Cref{prop:d2} or \Cref{prop:d2-odd},
  the following homotopy classes are trivial:
  \begin{itemize}
  \item[(a)] $xc$,
  \item[(b)] $x (1 \otimes J)$,
  \item[(c)] $\tau^{k} (\Sigma^{0,2M} \nu x) c$, and
  \item[(d)] $\tau^2 (\nu x) (1 \otimes \wt{J})$ if $p = 2$ and $\tau (\nu x) (1\otimes \wt{J})$ if $p > 2$.
  \end{itemize}
\end{lem}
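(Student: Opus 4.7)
The plan is to deduce all four vanishing statements from the hardest one, (d), via the chain of implications (d) $\Rightarrow$ (b) $\Leftrightarrow$ (a) $\Leftrightarrow$ (c). The implication (d) $\Rightarrow$ (b) is immediate upon inverting $\tau$. The equivalence (a) $\Leftrightarrow$ (b) follows from the factorization $m = c\hat{m}$ combined with the hypothesis $x(m - 1 \otimes J) = 0$, which together yield $xc\hat{m} = x(1 \otimes J)$; one then invokes injectivity of $\hat{m}$ on homotopy groups, supplied by \Cref{lem:split}. For (a) $\Leftrightarrow$ (c), the class in (c) lives in a bigraded homotopy group of $D_2(\Sigma^{0,2M}\nu R)$ that maps isomorphically under $\tau$-inversion to $\pi_m(D_2(R))$ via \Cref{lem:D2-eff} (or \Cref{lem:D2-eff-odd} at odd primes); a short calculation verifies that the connectivity/effectivity bound holds throughout the prescribed range of $d$.

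The main effort goes into (d). Set $y := (\nu x)(1 \otimes \wt J)$; the goal is to show $\tau y = 0$. The strategy is to apply $\nu$ to the cofiber sequence $D_2(B) \xrightarrow{\hat m} R \xrightarrow{\pi} B$ from \Cref{exm:goodwillie-cof} and exploit the fact that $\nu B$ has $\tau$-torsion-free bigraded homotopy in the relevant range, since $B$ is a truncation of $\Sigma^{-1}\mathrm{bo}$ whose $\F_p$-Adams spectral sequence degenerates at $\mathrm{E}_2$.

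First I claim $y \cdot \nu\pi = 0$. On inverting $\tau$ this composite becomes $x(1 \otimes J)\pi$, which vanishes because $x(1 \otimes J) = xc\hat{m}$ (from the hypothesis) and $\hat{m}\pi = 0$ (cofiber sequence). As $\nu B$ is $\tau$-torsion-free here, vanishing after $\tau$-inversion forces vanishing outright, so $y$ lifts through $\nu\pi$ to the fiber $F$ of that map. Writing $E$ for the cofiber of $\nu D_2(B) \to F$, the argument of \cite[Lemma 11.15]{Boundaries} shows that $E$ is a $C\tau$-module. Consequently $\tau$ applied to any lift of $y$ to $F$ pulls back to a class $z$ in the bigraded homotopy of $\nu D_2(B)$; it remains to show $z = 0$.

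The main obstacle, and the source of the constraint on $d$, is this final step. It requires an auxiliary calculation of $\pi_{t,t-s}(\nu D_2(B))$: at $p = 2$ one establishes $2$-torsion-freeness in the region $s \geq \tfrac{1}{2}(t-s) - n + 4$ via a Bockstein analysis of the integral homology of $D_2(B)$, using that $\HZ \otimes B$ splits as a sum of shifts of $\HZ$ and $\HF_2$. The bound $d \leq 2(M-6)$ in \Cref{prop:d2} is calibrated precisely so that the bidegree of $z$ falls into this region. At odd primes, $D_2(B)$ is a summand of $B \otimes B$ and the Adams spectral sequence for $\mathrm{bo} \otimes \mathrm{bo}$ degenerates and is $v_0$-torsion-free for $s > 0$, yielding $\tau$- and $p$-torsion-freeness whenever $M \geq 2$ (if $M < 2$ the filtration bound is vacuous). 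In either case $z$ is either zero or simultaneously non-$\tau$-torsion and non-$p$-torsion. Ruling out the latter reduces via the injectivity of $\hat{m}$ to showing $x(1 \otimes J) \in \pi_m R$ is $p$-power torsion, which holds because $J$ becomes torsion when restricted to any finite skeleton of $R$, precisely what our standing truncation convention achieves. Hence $z = 0$ and $\tau y = 0$, completing the proof.
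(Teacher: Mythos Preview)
Your proposal is correct and follows essentially the same approach as the paper: reduce (a), (b), (c) to (d) via the same implications, then prove (d) by lifting $y$ through the fiber $F$ of $\nu\pi$, using that the cofiber $E$ of $\nu D_2(B) \to F$ is a $C\tau$-module to produce the class $z$, and finally eliminating $z$ via the torsion-freeness lemmas for $\nu D_2(B)$ combined with the torsion property of $J$ on finite skeleta. A few minor slips to clean up: the target of the class in (c) is $D_2(\Sigma^{0,M}\nu R)$ rather than $D_2(\Sigma^{0,2M}\nu R)$ (the shift $\Sigma^{0,2M}$ on the tensor square distributes as $\Sigma^{0,M}$ on each factor); the paper states the $2$-torsion-free region with constant $+5$ rather than $+4$, which is what calibrates to $d \le 2(M-6)$; and at odd primes the paper phrases the degeneration for $\mathrm{ku}\otimes\mathrm{ku}$ rather than $\mathrm{bo}\otimes\mathrm{bo}$, though of course these are equivalent after $p$-localization.
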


\begin{proof}[Proof (reduction to (d)).]
  Clearly (d) implies (b), upon inverting $\tau$.
  By hypothesis $x(c \hat{m} - (1 \otimes J)) = 0$, and
  by \Cref{lem:split} the map $\hat{m}$ is injective on homotopy groups;
  therefore, (a) and (b) are equivalent.
  From \Cref{lem:D2-eff} (or \Cref{lem:D2-eff-odd}) we know that the map 
  \[ \pi_{m,m+2M-k}(D_2(\Sigma^{0,M} \nu \on)) \to \pi_{m}(D_2(\on)), \]
  is an isomorphism, and so (a) and (c) are equivalent.
\end{proof}

In order to prove $(d)$, the following two lemmas are helpful.

\begin{lem}\label{lem:D2-homotopy}
  At $p=2$, for $s \geq \frac{1}{2}(t-s) - n + 4$,
  every class in $\pi_{t-s,t}(\nu D_2(\on))$ which is rationally trivial is simple $\tau$-torsion.
\end{lem}

\begin{proof}
  Using the fact that $D_2(\on)$ is $2(n-1)$-connective and \cite{MayMilgram},
  we find that the bigraded homotopy groups in this region where $s \geq \frac{1}{2}(t-s)-n+3$ are determined by the $2$-Bockstein spectral sequence converging to the integral homology of
  $D_2(\on)$.
  As a consequence, to prove the lemma it suffices to show that the $2$-Bockstein spectral sequence degenerates on the $\mathrm{E}_3$-page.
  In other words, we need to prove that (the implicitly $2$-completed) $\Z \otimes D_2(\on)$ is a direct sum of shifts of $\Z$, $\Z/4$ and $\Z/2$. % with no non-simple $2$-torsion.

  Since $D_2$ is quadratic and $\Z \otimes \on$ is a sum of copies of $\Z$ and $\Z/2$, the equivalence
  \[ \Z \otimes D_2(\on) \simeq D_2^{\Z}( \Z \otimes \on ) \]
  allows us to conclude using the following observations:
  \begin{itemize}
  \item $\Sigma^\ell \Z \otimes_\Z \Sigma^\ell \Z$ is a shift of $\Z$.
  \item $\Sigma^\ell \Z \otimes_\Z \Sigma^\ell\F_2$ is an $\F_2$-module.
  \item $\Sigma^\ell \F_2 \otimes_\Z \Sigma^\ell \F_2$ is an $\F_2$-module.
  \item $D_2^\Z(\Sigma^{\ell} \F_2)$ consists of $4$-torsion (see \cite[Lemma A.24($\bullet$)]{torsion-bound}), i.e. consists of a sum of shifts of $\Z/4$ and $\Z/2$.
  \item $D_2^\Z(\Sigma^{\ell} \Z)$ is the cohomology of $C_2$ with $\Z$ coefficients for $\ell$ even and
    $\Z^{\mathrm{sgn}}$ coefficients for $\ell$ odd. In particular, it consists of a sum of copies of $\Z$ and $\Z/2$.
  \end{itemize}
\end{proof}

\begin{lem}\label{lem:D2-homotopy-odd}
  If $p \neq 2$, then $\pi_{t-s,t}(\nu D_2(\on))$ is $\tau$-torsion free.
  If $s > 0$, then it is $p$-torsion free as well.
\end{lem}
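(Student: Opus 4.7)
The plan is to exploit the splitting of $D_2$ off of $(-)^{\otimes 2}$ available at odd primes, and then reduce the claim to the classical fact that the $\F_p$-Adams spectral sequence for $\mathrm{bo} \otimes \mathrm{bo}$ has a particularly clean structure. First, since $p$ is odd, the order $2$ of $\Sigma_2$ is invertible $p$-locally, so the transfer map $D_2(X) \to X^{\otimes 2}$ splits $D_2(X) = (X^{\otimes 2})_{hC_2}$ off of $X^{\otimes 2}$ in any $p$-local stable symmetric monoidal category, including $\Syn_{\F_p}$. Because $\nu$ is symmetric monoidal and the splitting is preserved by symmetric monoidal functors, $\nu D_2(B) \simeq D_2(\nu B)$ is a summand of $(\nu B)^{\otimes 2} \simeq \nu(B \otimes B)$. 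It thus suffices to prove both torsion-freeness statements for $\nu(B \otimes B)$.

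Next I would translate the statement into the language of the classical $\F_p$-Adams spectral sequence. For a bounded-below spectrum $X$, the bigraded homotopy $\pi_{t-s,t}(\nu X)$ encodes the Adams spectral sequence of $X$ in the standard way: $\tau$-torsion detects differentials, and $p$-torsion in positive filtration detects $v_0$-torsion on $E_\infty$. Hence the lemma amounts to showing that at odd $p$, the $\F_p$-Adams spectral sequence for $B \otimes B$ collapses at $E_2$ and has $v_0$-torsion free $E_\infty$ above the zero line. Since $B = \tau_{\geq n-1}\Sigma^{-1}\mathrm{bo}$ is a connective truncation of $\Sigma^{-1}\mathrm{bo}$, and truncation does not introduce Adams differentials or $v_0$-torsion in the range of interest, this further reduces to the corresponding statement for $\mathrm{bo} \otimes \mathrm{bo}$.

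The final input is classical: at an odd prime $p$, the spectrum $\mathrm{bo}_{(p)}$ splits as a wedge of suspensions of $\ell = \BP\langle 1\rangle_{(p)}$, so $\mathrm{bo} \otimes \mathrm{bo}$ is a wedge of shifts of $\ell \otimes \ell$. The $\F_p$-Adams spectral sequence of $\ell \otimes \ell$ collapses at $E_2$ and has $v_0$-torsion free $E_\infty$ in positive filtration, as one sees from the $\BP$-module structure on $\ell$, or from the fact that $H_*(\ell; \F_p)$ has a clean $\A_*$-comodule decomposition making $\mathrm{Ext}_{\A_*}(H_*(\ell \otimes \ell;\F_p), \F_p)$ a polynomial ring on $v_0$ in each underlying bidegree. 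The main obstacle I anticipate is bookkeeping: one must verify that the Adams spectral sequence statements for $\mathrm{bo} \otimes \mathrm{bo}$ transfer cleanly through the double desuspension and the two-fold connective truncation defining $B \otimes B$, and that the $D_2$ splitting interacts as expected with our standing truncation conventions inside $\Syn_{\F_p}^{[0,3n-2]}$. Both are connectivity checks, but they need to be done carefully to ensure no low-degree classes slip in.
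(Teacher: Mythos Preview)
Your approach is essentially the same as the paper's: split $D_2(B)$ off of $B \otimes B$ at odd primes, then reduce to a known Adams spectral sequence computation for a $\BP\langle 1\rangle$-type spectrum. The paper phrases the reduction slightly differently---it observes directly that at odd primes $B$ is a retract of a suspension of $\mathrm{ku}$, and then cites Adams' blue book for the degeneration and $v_0$-torsion freeness of the Adams spectral sequence for $\mathrm{ku}\otimes\mathrm{ku}$---but this and your reduction via $\ell\otimes\ell$ are interchangeable, since $\mathrm{ku}_{(p)}$ splits as a wedge of suspensions of $\ell$.

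One small point worth tightening: your phrase ``truncation does not introduce Adams differentials or $v_0$-torsion'' is not the right justification in general. The clean reason the reduction works is that at odd primes $\Sigma^{-1}\mathrm{bo}_{(p)}$ has homotopy concentrated in degrees $\equiv 3 \pmod 4$, so the connective truncation $B=\tau_{\geq n-1}\Sigma^{-1}\mathrm{bo}$ is literally a suspension of $\mathrm{bo}_{(p)}$ (and hence a retract of a suspension of $\mathrm{ku}_{(p)}$). Once you say it that way, there is no bookkeeping left: $B\otimes B$ is a retract of a suspension of $\mathrm{ku}\otimes\mathrm{ku}$, and the synthetic statement follows immediately from the classical one.
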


\begin{proof}
At odd primes $D_2(\on)$ is a summand of $\on \otimes \on$, and $\on$ is a retract of a suspension of $\mathrm{ku}$.  This lemma thus reduces to the claim that the Adams spectral sequence for $\mathrm{ku} \otimes \mathrm{ku}$ degenerates at $\mathrm{E}_2$ and is $v_0$-torsion free for $s>0$.  These claims are proved in \cite[Part III \S 17]{AdamsBlue}.
%  At odd primes $D_2(B)$ is a summand of $B \otimes B$, so this lemma reduces to the claim that the Adams spectral sequence for $\mathrm{bo} \otimes \mathrm{bo}$ degenerates at $\mathrm{E}_2$ and is $v_0$-torsion free for $s > 0$. These claims are proved in [cite]. \todo{cite...}
\end{proof}

%We will need the following pair of lemmas, which together provide the necessary information about the homotopy of $D_2(B)$.
%Before we can go forward with this we need the following pair of lemmas which provides the necessary information about the homotopy of $D_2(B)$.
%% rests on the following lemma which characterizes the region of the bigraded homotopy in which $(\nu x)(1 \otimes \wt{J})$ lives. 

\begin{proof}[Proof (of \cref{lem:nulls}(d)).]
The proof of (d) will come down to an analysis of the cofiber sequence
\[ D_2(\on) \to \sOf \to \on \]
from \Cref{exm:goodwillie-cof}.

  Let $y \coloneqq (\nu x)(1 \otimes \wt{J})$. 
  We have a diagram
  \begin{center}
    \begin{tikzcd}
      & & \Ss^{m,m+M} \ar[d, "y"] \\
      \nu D_2(\on) \ar[r] & F \ar[r] \ar[d] & \nu \sOf \ar[r] & \nu \on \\
      & E
    \end{tikzcd}
  \end{center}
  where $F$ is the fiber the of the right map and $E$ is the cofiber of the left map.
  In \cite[Lemma 11.15]{Boundaries} the authors showed that $E$ is a $C\tau$-module.
  We also know that the bigraded homotopy of $\nu \on$ is $\tau$-torsion free, since the classical Adams spectral sequence for $\mathrm{bo}$ degenerates.

  By hypothesis we have an equality $x (1 \otimes J) = x c \hat{m}$.
  Then, since $\hat{m} \pi = 0$ we learn that $x (1 \otimes J) \pi = 0$.
  Using the fact that the homotopy of $\nu \on$ is $\tau$-torsion free we can conclude that $y \nu \pi = 0$ and $y$ lifts to $F$.
  Since $E$ is a $C\tau$-module we can conclude that $\tau y$ lifts to $\pi_{m, m+M-1}\nu D_2(\on)$.
  Let $z$ be such a lift.
  
  %Finally, we may apply the previous Lemma \ref{lem:D2-homotopy} and \ref{lem:D2-homotopy-odd} to $z$.
  If $p=2$, then we apply \Cref{lem:D2-homotopy} to $z$
  which applies since\footnote{This is where the upper bound on $d$ in \Cref{prop:d2} comes from.}
  \[ M-1 \geq \frac{1}{2} (2n + d) - n + 4 = \frac{d}{2} + 4 \]
  by assumption.
  From this we learn that $z$ is either simple $\tau$-torsion or
  non-$2$-torsion and non-$\tau$-torsion.
  In particular, to show that $\tau^2 y = 0$ it will suffice to show that $z$ is torsion after inverting $\tau$. Again using that the map $\hat{m}$ is injective on homotopy groups, it will suffice to show that 
  $ x (1 \otimes J)$ is torsion.
  For this we may note that $J$ becomes, upon restriction to any finite skeleton of $\sOf$, torsion as a map of spectra.
  
  If $p \neq 2$, then we use \Cref{lem:D2-homotopy-odd}, which applies when $M \geq 2$.
  If $M < 2$, then the filtration bound of \Cref{prop:d2-odd} that we are proving is vacuous.
  This time we learn that $z$ is either zero or non-$p$-torsion and non-$\tau$-torsion.
  As above using that the map $\hat{m}$ is injective on homotopy groups and
  that $J$ is torsion on any finite skeleton of $\sOf$ we may conclude that $z=0$.
%  Finally, although the map $J$ itself isn't torsion \todo{?},
% after restriction to any finite skeleton it becomes torsion. 
\end{proof}

\section{Applications of vanishing lines} \label{sec:lines}
In this section we complete the proof of \Cref{thm:mon-unit} by showing that the Adams filtration bounds from the previous section are sufficient to conclude that $J(x)$ and $d_2(x)$ are in the image of $J$.

\begin{dfn} \label{dfn:h}
  Let $\Gamma_p(k)$ denote the minimal $m$ such that every $\alpha \in \pi_{k}\Ss_{(p)}$ with $\F_p$-Adams filtration strictly greater than $m$ is detected $K(1)$-locally.
\end{dfn}

\begin{rmk}
At odd primes, all classes in $\pi_{k} \Ss_{(p)}$ that are detected $K(1)$-locally are in $\mathrm{Im}(J)$.  At the prime $2$, a class detected $K(1)$-locally may be the sum of a class in $\mathrm{Im}(J)$ with a class in the $\mu$-family.  As explained in \cite[p. 30]{Boundaries}, no $\mu$-family class is killed by the unit of $\MOn$ when $n \geq 3$.  Specifically, composing this unit with the Atiyah--Bott--Shapiro orientation yields a sequence
  \[ \Ss \to \MOn \to \mathrm{MO}\langle 3 \rangle =\MSpin \to \mathrm{bo} \]
that on homotopy groups in degrees $\geq 2$ has the effect of killing $\mathrm{Im}(J)$ while not killing any of the $\mu$-family.  As such, any sum of an $\mathrm{Im}(J)$ class and $\mu$-family class which is in the kernel of the unit map to $\pi_*(\MOn)$ must in fact lie in $\mathrm{Im}(J)$.
\end{rmk}

\begin{comment}
  At $p=2$, the elements of $\pi_*\Ss$ in the $\mu$ family in degrees $1$ and $2$ modulo $8$ are detected $K(1)$-locally, but are not in the image of $J$. This suggests the possibility that these elements are also in the kernel of the unit map of $\MOn$. We can exclude this by considereing the Atiyah--Bott--Shapiro orientation,
  \[ \Ss \to \MOn \to \MSpin \to \mathrm{bo}. \]
  Since the $\mu$ family elements are detected by this composited [cite], they cannot be in the kernel of the unit map for $\MOn$.
\end{rmk}
\end{comment}

As a consequence of the above remark, we will deduce \Cref{thm:mon-unit} by comparing the lower bounds of Propositions \ref{prop:d1}--\ref{prop:d2-odd} with upper bounds on $\Gamma_p$.  Such upper bounds were the main subject of \cite[Appendix B]{Boundaries}, and we recall the relevant results below.  First, we set up some notation.

\begin{ntn}\ 
\begin{enumerate}
\item $q \coloneqq 2p-2$,
\item $v_p(k)$ will denote the $p$-adic valuation of an integer $k \in \Z$,
\item 
  $ \ell(k) \coloneqq \begin{cases} v_2(k+1) + v_2(k+2) & \text{ if } p=2 \\ v_p(k+2) & \text{ if } p \neq 2 \text{ and } k+2 \equiv 0 \pmod q \\ 0 & \text{ otherwise } \end{cases}, $
\end{enumerate}
We will sometimes use that $\ell(k) \in O(\log(k))$.
\end{ntn}

Davis and Mahowald proved the bound
\begin{equation} \label{eq:DM}
  \tag{$\mathrm{DM}_2$}
  \Gamma_2(k) \leq \frac{3}{10} k + 4 + v_2(k+2) + v_2(k+1)
\end{equation}
in \cite[Corollary 1.3]{DM3}. This is the best known explicit bound at $p=2$.
In \cite[Proposition 6.3.20]{cookware}, the first author proved that
\begin{equation} \label{eq:cook3}
  \tag{$\mathrm{C}_3$}
  \Gamma_3(k) \leq \frac{1}{16}k + 7 + \frac{1}{4},
\end{equation}
\begin{equation} \label{eq:cookp}
  \tag{$\mathrm{C}_p$}
  \Gamma_p(k) \leq \frac{1}{2p^2 - 2}k + 2p^2 - 4p + 4 + \frac{2}{p^2 - 1},
\end{equation}
where the second line is for $p \geq 5$. These are explicit bounds with the best linear term that are curently known at odd primes. For our purposes we will also need an older bound with better intercept due to Gonz\'{a}lez \cite[Theorem 5.1]{Gonzalez}. For $p \geq 5$, Gonz\'{a}lez proved that
\begin{equation} \label{eq:Gon}
  \tag{$\mathrm{G}_p$}
  \Gamma_p(k) \leq \frac{(2p-1)}{(2p-2)(p^2 - p -1)}k + 3 + \ell(k).
\end{equation}

At the prime $2$, we will also make use of the bound of \cite[Appendix B]{Boundaries}, which has a better slope than the bound of Davis--Mahowald.
This bound depends on the following function which quantifies the vanishing curve of the Adams--Novikov spectral sequence.

\begin{dfn}  
  Let $f_\BP(k)$ denote the minimal $m$ such that
  for every connective $p$-local spectrum $X$, $i < k$, and $\alpha \in \pi_i(X)$,
  if $\alpha$ has $\BP$-Adams filtration at least $m$, then $\alpha = 0$.
\end{dfn}
Hopkins and Smith observed that another formulation of the Nilpotence theorem from \cite{DHS} is the fact that
$$f_{\BP}(k)=o(k).$$
As such, adding $f_{\BP}(k)$ as an ``error term'' does not affect the leading order behavior of a linear bound.

In terms of $f_{BP}$, \cite[Theorem B.7]{Boundaries} provides the following bound:
\begin{equation} \label{eq:BHS}
  \tag{$\mathrm{B}_p$}
  \Gamma_p(k) \leq \frac{(q+1)}{q|v_2|}k + \frac{(q+1)(|v_2| + 1)}{q|v_2|}f_{\BP}(k) + \ell(k).
\end{equation}

%The reason using this function in bounding $\Gamma$ is reasonable is that another formulation of the Nilpotence theorem from \cite{DHS} worked out by Hopkins and Smith is that
%$$ f_{\BP}(k) = o(k). $$
%Thus, this ``error term'' does not affect the leading order behavior of our bounds.
%In terms of this function \cite[Theorem B.7]{Boundaries} provides the following sharper bound,
%\[ \Gamma_p(k) \leq \frac{(q+1)}{q|v_2|}k + \frac{(q+1)(|v_2| + 1)}{q|v_2|}f_{\BP}(k) + \ell(k). \]

\begin{proof}[Proof (of \Cref{thm:mon-unit}(2)).]
  The possible elements in the kernel of $\pi_{m} \Ss \to \pi_m \MOn$ are of the form $J(x)$ for $x \in \pi_m \sOf$ and $d_2 (y)$ for $y \in \pi_{m-1} \sOf^{\otimes 2}$.
  Using \Cref{prop:d1} and \Cref{prop:d2}, the problem is reduced to knowing when $2M-d-2>\Gamma_2(m)$ and $d \le 2(M-5)$.
%	\[ 2M - d > \Gamma_2(m) \text{ and } 2M - d-2 > \Gamma_2(m). \]
%  \[ 2M - d > \Gamma_2(m) \text{ and } 2M - \max(d-1,1) - 1 > \Gamma_2(m). \]
%  \todo{Cite out to other place for d=0.}  
  Using the bound (\ref{eq:BHS}), the first inequality will hold when the following does:
  \[ 2 \left( h(n-1) - \lfloor \log_2(3n-2) \rfloor + 1 \right) - d -2 > \frac{1}{4}m + \frac{21}{12}f_{\BP}(m) + \ell(m)  \]
  Since $f_{\BP}$ is sublinear, $h(n)$ is $n/2$ up to a constant error term, $\ell$ is at most logarithmic, and $d \leq n$ this can be simplified to,
  \[ n - d > \frac{1}{4}(m) + \epsilon'(n) \]
  where $\epsilon'(n)$ is a sublinear error term.
  Simplifying further we get 
  \[ \frac{2}{5} n - \frac{4}{5} \epsilon'(n) > d, \]
  which is the desired conclusion.

  Similarly, $M$ is equal to $n/2$ up to a sublinear error term, so the inequality $d \leq 2(M-6)$ may be rewritten in the form
  \[ d \leq n/2 + \epsilon''(n) \]
  for a sublinear function $\epsilon'' (n)$.
  Since $1/2 > \frac{2}{5}$, we get the desired statement
\end{proof}

\begin{proof}[Proof (of \Cref{thm:mon-unit}(3)).]
  Again, using \Cref{prop:d1} and \Cref{prop:d2}, the problem is reduced to knowing when 
	$2M-d-2 > \Gamma_2(m)$
	and $d \le 2(M-5)$.
%	\[ 2M - d > \Gamma_2(m) \text{ and } 2M - d-2 > \Gamma_2(m). \]
%  \[ 2M - d > \Gamma_2(m) \text{ and } 2M - \max(d-1,1) - 1 > \Gamma_2(m). \]
%  \todo{Cite out to other place for d=0.}  
  Using the Davis--Mahowald bound (\ref{eq:DM}) on $\Gamma_2$, the first inequality holds when
  \[ 2 \left( h(n-1) - \lfloor \log_2(3n-2) \rfloor + 1 \right) - d - 2 > \frac{3}{10} (2n+d) + 4 + \log_2(2n+d+2). \]
  Using elementary manipulations one can show that it suffices to have
  \[  4n - 60 - 30\log_2(3n-2) > 13d . \]  
  On the other hand, the second inequality $d \leq 2(M-5)$ is implied by the inequality
  \[ n - 2 \log_2 (3n-2) - 10 \geq d.\]
  It is straightforward to show that the former inequality implies the latter when $n \geq 6$, so we obtain the desired result.
\end{proof}

\begin{proof}[Proof (of \Cref{thm:mon-unit}(4)).]
  Now using \Cref{prop:d1-odd} and \Cref{prop:d2-odd}, the problem is reduced to knowing when
  %\[ 2M > \Gamma_3(m) \text{ and } 2M-2 > \Gamma_3(m) \]
  \[ 2M-2 > \Gamma_3(m). \]
  Using the bound (\ref{eq:cook3}) on $\Gamma_3$ above it will suffice to show that
  \[ 2\left\lfloor \frac{n}{4} \right\rfloor - 2\left\lfloor \log_3\left( \frac{3n-2}{2} \right) \right\rfloor - 2 > \frac{1}{16}(2n+d) + 7 + \frac{1}{4}. \]  
  Using that $d \leq n-4$ and rearranging it will suffice to know that
  \[ \frac{5}{16}n > 11  + 2\log_3\left( \frac{3n-2}{2} \right). \]  
  Elementary arguments now suffice to conclude that this inequality holds for $n \geq 62$.
  Finally, one may verify that the original inequality also holds for $n =60,61$ using a computer.
\end{proof}

\begin{proof}[Proof (of \Cref{thm:mon-unit}(5-9)).]
  Again using \Cref{prop:d1-odd} and \Cref{prop:d2-odd}, the problem is reduced to knowing when
  \[ 2M-2 > \Gamma_p(m). \]
  Using Gonzalez' bound (\ref{eq:Gon}) on $\Gamma_p$ it will suffice to show that
  \[   2\left\lfloor \frac{n}{2p-2} \right\rfloor - 2\left\lfloor \log_p\left( \frac{3n-2}{2} \right) \right\rfloor - 2 > \frac{(2p-1)}{(2p-2)(p^2 - p -1)}(2n+d) + 3 + \ell(2n+d).  \]
Using that $d \leq n-4$ and rearranging it will suffice to show that,
\[ \frac{2p^2 - 8p + 1}{(2p-2)(p^2 - p - 1)} n >  7 + 3\log_p(3n-2) - 2\log_p(2). \]
Elementary arguments now suffice to conclude that the inequality $2M-2 > \Gamma_p (m)$ holds in the following cases:
\begin{multicols}{2}
  \begin{itemize}
  \item $p=5$ and $n \geq 176$,
  \item $p=7$ and $n \geq 120$,
  \item $p=11$ and $n \geq 160$,
  \item $p=13$ and $n \geq 168$,
  \item $p=17$ and $n \geq 224$,
  \item $p=19$ and $n \geq 252$,
  \item $p \geq 23$ and $n \geq \frac{2p^2 - 2p -2}{3}$.
  \end{itemize}
\end{multicols}
  Since $2p^2-2p-2$ is the first degree in which the cokernel of $J$ is nontrivial we now only need to consider the primes less than $23$. Using specific knowledge of the homotopy groups of spheres in low degrees from \cite{OKA1,OKA2}, we can improve the bounds for $p \geq 11$ to the following:
\begin{multicols}{2}
  \begin{itemize}
  \item $p=11$ and $n \geq 100$,
  \item $p=13$ and $n \geq 120$,
  \item $p=17$ and all $n$,
  \item $p=19$ and all $n$.
  \end{itemize}
\end{multicols}
The bounds at $p=5,7$ could be likely be improved by making use of Ravenel's computations of the $5$-complete stable stems through 1000 \cite{GreenBook} and the Oka--Nakamura results \cite{OKA1,OKA2}. However, this is less straightforward than for larger primes.
\end{proof}

\bibliographystyle{alpha}
\bibliography{bibliography}

\end{document}